\definecolor{dark-red}{rgb}{0.6,0.15,0.15}
\definecolor{dark-blue}{rgb}{0.15,0.15,0.4}
\definecolor{medium-blue}{rgb}{0,0,0.5}
\definecolor{gray}{rgb}{0.5,0.5,0.5}
\setlist[enumerate,1]{label={(\roman*)}, ref={(\roman*)}}
\setlist[enumerate,2]{label={(\alph*)}, ref={(\roman{enumi}.\alph*)}}
\setlist[enumerate,3]{label={(\arabic*)}, ref={(\roman{enumi}.\alph{enumii}.\arabic*)}}
\newcommand\plainparagraph[1]{
	\medskip
	\noindent\textbf{#1}%
}
\newcommand{\noop}[1]{}
\newcommand\yes{\textsc{Yes}}
\newcommand\no{\textsc{No}}
\newlength{\RoundedBoxWidth}
\newsavebox{\GrayRoundedBox}
\newenvironment{GrayBox}[1]%
   {\setlength{\RoundedBoxWidth}{.93\textwidth}
    \def\boxheading{#1}
    \begin{lrbox}{\GrayRoundedBox}
       \begin{minipage}{\RoundedBoxWidth}}%
   {   \end{minipage}
    \end{lrbox}
    \begin{center}
    \begin{tikzpicture}%
       \node(Text)[draw=black!20,fill=white,rounded corners,%
             inner sep=2ex,text width=\RoundedBoxWidth]%
             {\usebox{\GrayRoundedBox}};
        \coordinate(x) at (current bounding box.north west);
        \node [draw=white,rectangle,inner sep=3pt,anchor=north west,fill=white] 
        at ($(x)+(6pt,.75em)$) {\boxheading};
    \end{tikzpicture}
    \end{center}}     
\newenvironment{defproblemx}[2][]{\noindent\ignorespaces%
                                \FrameSep=6pt%
                                \parindent=0pt%
                \vspace*{-.5em}
                \ifthenelse{\isempty{#1}}{%
                  \begin{GrayBox}{\textsc{#2}}%
                }{%
                  \begin{GrayBox}{\textsc{#2} parameterized by~{#1}}%
                }
                \begin{tabular*}{\textwidth}{@{\hspace{.1em}} >{\itshape} p{1.8cm} p{0.8\textwidth} @{}}%
            }{
                \end{tabular*}%
                \end{GrayBox}%
                \ignorespacesafterend
            }  
\newcommand{\fancyparproblemdef}[4]{
  \begin{defproblemx}[#3]{#1}
    Input:  & #2 \\
    Question: & #4
  \end{defproblemx}
}%
\newcommand{\fancyproblemdef}[3]{
  \begin{defproblemx}{#1}
    Input:  & #2 \\
    Question: & #3
  \end{defproblemx}
}%
\newcommand*{\defeq}{\mathrel{\vcenter{\baselineskip0.5ex \lineskiplimit0pt
                     \hbox{\scriptsize.}\hbox{\scriptsize.}}}%
                     =}
\newcommand*{\eqdef}{=\mathrel{\vcenter{\baselineskip0.5ex \lineskiplimit0pt
                     \hbox{\scriptsize.}\hbox{\scriptsize.}}}%
                     }
\newcommand\card[1]{\lvert#1\rvert}
\newcommand\abs[1]{\card{#1}}
\newcommand\bN{\mathbb{N}}
\newtheorem{theorem}{Theorem}[section]
\newtheorem{lemma}[theorem]{Lemma}
\newtheorem{proposition}[theorem]{Proposition}
\newtheorem{corollary-bf}[theorem]{Corollary}
\newtheorem{definition}[theorem]{Definition}
\newtheorem{reduction}{Reduction}
\newtheorem{corollary-wp}[theorem]{Corollary}
\newtheorem{nestedclaim}{Claim}[theorem]
\newtheorem{corollary}[theorem]{Corollary}
\newtheorem{remark}[theorem]{Remark}
\newtheorem{observation}[theorem]{Observation}
\newtheorem{case}{Case}
\newcommand\claimqed{$\lrcorner$}
\theoremstyle{nonumberplain}
\newtheorem{proof}{Proof}
\newtheorem{claimproof}{Proof}
\newtheorem{claim-nn}{Claim}
\newtheorem{corollary-nn}{Corollary}
\newtheorem{theorem-nn}{Theorem}
\newtheorem{lemma-nn}{Lemma}
\newtheorem{proposition-nn}{Proposition}
\newenvironment{re-proof}[1]
	{\begin{proof} 
	\begingroup}
	{\endgroup
	\end{proof}}
\newenvironment{re-theorem}[1]
	{\begin{theorem-nn}[\ref{#1}]
		\setcounter{nestedclaim}{0}}
	{\end{theorem-nn}}
\newenvironment{re-lemma}[1]
	{\begin{lemma-nn}[\ref{#1}]
		\setcounter{nestedclaim}{0}}
	{\end{lemma-nn}}
\newenvironment{re-proposition}[1]
	{\begin{proposition-nn}[\ref{#1}]
		\setcounter{nestedclaim}{0}}
	{\end{proposition-nn}}
\newenvironment{re-claim}[1]
	{\begin{claim-nn}[\ref{#1}]}
	{\end{claim-nn}}
\crefname{definition}{Definition}{Definitions}
\crefname{theorem}{Theorem}{Theorems}
\crefname{lemma}{Lemma}{Lemmas}
\crefname{corollary}{Corollary}{Corollaries}
\crefname{corollary-wp}{Corollary}{Corollaries}
\crefname{corollary-bf}{Corollary}{Corollaries}
\crefname{observation}{Observation}{Observations}
\crefname{nestedobservation}{Observation}{Observations}
\crefname{fact}{Fact}{Facts}
\crefname{remark}{Remark}{Remarks}
\crefname{conjecture}{Conjecture}{Conjectures}
\crefname{proposition}{Proposition}{Propositions}
\crefname{figure}{Figure}{Figures}
\crefname{table}{Table}{Tables}
\crefname{section}{Section}{Sections}
\crefname{subsection}{Subsection}{Subsections}
\crefname{subsubsection}{Subsection}{Subsections}
\crefname{algorithm}{Algorithm}{Algorithms}
\crefname{example}{Example}{Examples}
\crefname{note}{Note}{Notes}
\crefname{claim}{Claim}{Claims}
\crefname{nestedclaim}{Claim}{Claims}
\crefname{enumi}{}{}
\crefname{equation}{}{}
\crefname{reduction}{Reduction}{Reductions}
\crefname{subreduction}{Reduction}{Reductions}
\crefname{property}{Property}{Properties}
\crefname{case}{Case}{Cases}
\crefname{subcase}{Case}{Cases}
\crefname{subsubcase}{Case}{Cases}
\crefname{process}{Process}{Processes}
\newcommand\calA{\mathcal{A}}
\newcommand\calB{\mathcal{B}}
\newcommand\calF{\mathcal{F}}
\newcommand\calM{\mathcal{M}}
\newcommand\calO{\mathcal{O}}
\newcommand\cO{\calO}
\newcommand\calP{\mathcal{P}}
\newcommand\calS{\mathcal{S}}
\newcommand\calT{\mathcal{T}}
\newcommand\bd{\mathrm{bd}}
\newcommand\bdd[2]{\bd(#1, #2)}
\newcommand\labeling{\ensuremath{\lambda}}
\newcommand\terminals{\mathcal{X}}
\newcommand\domains{\mathcal{U}}
\newcommand\probDP{\textsc{Disjoint Paths}}
\newcommand\probTDP{\textsc{Totally Disjoint Paths}}
\newcommand\SRDP{\textsc{Set-Restricted Disjoint Paths}}
\newcommand\SRTDP{\textsc{Set-Restricted Totally Disjoint Paths}}
\newcommand\SRDCS{\textsc{Set-Restricted Disjoint Connected Subgraphs}}
\newcommand\mathsc[1]{\mbox{\textsc{#1}}}
\newcommand\obswp{\mathbb{O}}
\tikzset{
	auto,
	node distance = 1cm,
	vtx/.style={
		circle,
		fill=black,
		draw,
		thick,
		minimum size=.11cm,
		inner sep=0pt
	},
	bag/.style={
		shape=circle,
		draw,
		minimum width=2.25cm,
	},
	smallbag/.style={
		shape=circle,
		draw,
		minimum width=1.5cm,
	},
	edge/.style = {draw},
	nonedge/.style = {draw, dotted},
	diredge/.style = { ->},
	edgelabel/.style = {scale=0.75, midway},
	highlighta/.style = {color=lightgray},
	highlightb/.style = {color=lightgray},
	figlabel/.style = {scale=1.2}
}
\author[1,2]{Jungho Ahn} 
\author[3]{Lars Jaffke}
\author[4,2]{O-joung Kwon}
\author[3]{Paloma T.\ Lima}
\affil[1]{Department of Mathematical Sciences, KAIST, Daejeon,~South~Korea}
\affil[2]{Discrete Mathematics Group, Institute for Basic Science (IBS), Daejeon, South~Korea}
\affil[3]{Department of Informatics, University of Bergen, Bergen, Norway}
\affil[4]{Department of Mathematics, Incheon National University, Incheon,~South~Korea}
\affil[ ]{\small \textit{Email addresses:}  \texttt{junghoahn@kaist.ac.kr},
  \texttt{lars.jaffke@uib.no},
  \texttt{ojoungkwon@gmail.com},
  \texttt{Paloma.Lima@uib.no }}
\title{Well-partitioned chordal graphs: obstruction set and disjoint paths%
	\thanks{J.\,A.\ and O.\,K.\ are supported by IBS-R029-C1.
		O.\,K.\ is also supported by the National Research Foundation of Korea (NRF) grant funded by the Ministry of Education (No.\ NRF-2018R1D1A1B07050294). 
		L.\,J.\ is supported by the Bergen Research Foundation (BFS), grant number 810564.
		P.\,T.\,L.\ is supported by the Norwegian Research Council via the project ``CLASSIS'', grant number 249994.}} 
\begin{document}

\maketitle

\begin{abstract}
	We introduce a new subclass of chordal graphs that generalizes split graphs, 
	which we call well-partitioned chordal graphs.
	Split graphs are graphs that admit a partition of the vertex set into cliques that can 
	be arranged in a star structure, the leaves of which are of size one.
	Well-partitioned chordal graphs are a generalization of this concept in the following two ways.
	First, the cliques in the partition can be arranged in a tree structure,
	and second, each clique is of arbitrary size.
	We provide a characterization of well-partitioned chordal graphs by forbidden induced subgraphs,
	and give a polynomial-time algorithm that given any graph, 
	either finds an obstruction, 
	or outputs a partition of its vertex set that asserts that the graph is well-partitioned chordal.
	We demonstrate the algorithmic use of this graph class by showing that two variants of the problem of finding
	pairwise disjoint paths between $k$ given pairs of vertices 
	is in \FPT{} parameterized by $k$ on well-partitioned chordal graphs, 
	while on chordal graphs, these problems are only known to be in \XP.
	From the other end, we observe that there are problems that are polynomial-time solvable on split graphs,
	but become \NP-complete on well-partitioned chordal graphs.
\end{abstract}

\section{Introduction}
A central methodology in the study of the complexity of computationally hard graph problems
is to impose additional structure on the input graphs, 
and determine if the additional structure can be exploited in the design of an efficient algorithm.
Typically, one restricts the input to be contained in a \emph{graph class}, 
which is a set of graphs that share a common structural property.
For example, the class of \emph{forests} is the class of graphs that do not contain a cycle.
Following the establishment of the theory of \NP-hardness, 
numerous problems were investigated in specific classes of graphs;
either providing a polynomial-time algorithm for a problem $\Pi$ on a specific graph class, 
while $\Pi$ is \NP-hard in a more general setting,
or showing that $\Pi$ remains \NP-hard on a graph class.
We refer to the textbooks~\cite{GRAPHCLASSES,GOLUMBIC} for a detailled introduction to the subject.
A key question in this field is to find for a given problem $\Pi$ that is hard on a graph class $\calA$,
a subclass $\calB \subsetneq \calA$ such that $\Pi$ is efficiently solvable on $\calB$.
Naturally, the goal is to narrow down the gap $\calA \setminus \calB$ as much as possible,
and several notions of hardness/efficiency can be applied.
For instance, we can require our target problem to be \NP-hard on $\calA$ and polynomial-time solvable on $\calB$;
or, from the viewpoint of parameterized complexity~\cite{CYGANETAL,DOWNEYFELLOWS},
we require a target parameterized problem $\Pi$ to be \W[1]-hard on $\calA$, while $\Pi$ is in \FPT{} on $\calB$,
or a separation in the kernelization complexity~\cite{KERNELIZATION} of $\Pi$ between $\calA$ and $\calB$.

Chordal graphs
are arguably one of the main characters in the algorithmic study of graph classes.
They find applications for instance in computational biology~\cite{SempleS2003}, optimization~\cite{Lieven2015},
and sparse matrix computations~\cite{GTSPARSEMATRIX}.
Split graphs
are an important subclass of chordal graphs.
The complexities of computational problems on chordal and split graphs often coincide,
see e.g.~\cite{DOMSETSPLIT,MAXCUTSPLIT,Paola1999,HAMCYCSPLIT};
however, this is not always the case. 
For instance, several variants of graph (vertex) coloring problems are polynomial-time solvable on split graphs
and \NP-hard on chordal graphs, see the works of Havet et al.~\cite{HSS12}, and of Silva~\cite{Sil19}.
Also, the \textsc{Sparest $k$-subgraph}~\cite{Watrigant2016} and \textsc{Densest $k$-subgraph}~\cite{CorneilP1984} 
problems are polynomial-time solvable on split graphs and \NP-hard on chordal graphs.
Other problems, for instance the \textsc{Tree $3$-Spanner} problem~\cite{Brandstadt04},
are easy on split graphs, while their complexity on chordal graphs is still unresolved.

In this work, we introduce the class of \emph{well-partitioned chordal graphs}, 
a subclass of chordal graphs that generalizes split graphs,
which can be used as a tool for narrowing down complexity gaps for problems that are hard on chordal graphs, 
and easy on split graphs.
The definition of well-partitioned chordal graphs is mainly motivated by a property of split graphs: 
the vertex set of a split graph can be partitioned into sets that can be viewed as a central clique of arbitrary size 
and cliques of size one that have neighbors only in the central clique.
Thus, this partition has the structure of a star.
Well-partitioned chordal graphs relax these ideas in two ways: 
by allowing the parts of the partition to be arranged in a tree structure instead of a star, 
and by allowing the cliques in each part to have arbitrary size.
The interaction between adjacent parts $P$ and $Q$ remains simple: it induces a complete bipartite graph 
between a subset of $P$, and a subset of $Q$.
Such a tree structure is called a partition tree, and we give an example of a well-partitioned chordal graph in Figure~\ref{fig:ex-wpc}.
We formally define this class in Section~\ref{sec3}.
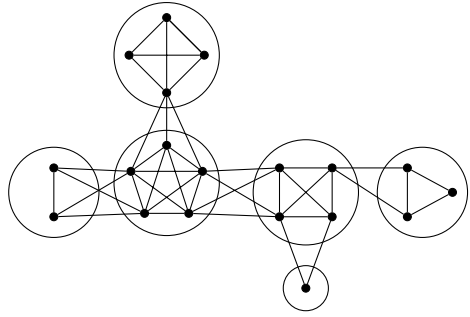
\begin{figure}
	\centering
    \tikzstyle{v}=[circle, draw, solid, fill=black, inner sep=0pt, minimum width=3pt]
	\begin{tikzpicture}[scale=1]
		\foreach \x in {0,1,2,3,4} {
    			\draw (\x*72+90:.5) node [v](v\x){};
    		}
    		\draw (v0)--(v1)--(v2)--(v3)--(v4)--(v0);
		\draw (v0)--(v2)--(v4)--(v1)--(v3)--(v0);
		\draw (0,1.2) node[v](v6){};
		\draw (0.5,1.7) node[v](v7){};
		\draw (0,2.2) node[v](v8){};
		\draw (-0.5,1.7) node[v](v9){};
		\draw (v1)--(v6)--(v0);
		\draw (v6)--(v4);
		\draw (v6)--(v7)--(v8)--(v9)--(v6);
		\draw (v6)--(v8)--(v7)--(v9);
		\draw (-1.5,0.2) node[v](v10){};
		\draw (-1.5,-0.45) node[v](v11){};
		\draw (v10)--(v11)--(v1)--(v10);
		\draw (v10)--(v2)--(v11);
		\draw (1.5,0.2) node[v](w1){};
		\draw (1.5,-0.45) node[v](w2){};
		\draw (2.2,0.2) node[v](w3){};
		\draw (2.2,-0.45) node[v](w4){};
		\draw (1.85,-1.4) node[v](w5){};
		\draw (w1)--(v4)--(w2)--(v3)--(w1);
		\draw (w1)--(w2)--(w3)--(w4)--(w1)--(w3);
		\draw (w2)--(w4)--(w5)--(w2);
		\draw (3.2,0.2) node[v](w6){};
		\draw (3.2,-0.45) node[v](w7){};
		\draw (3.8,-0.125) node[v](w8){};
		\draw (w6)--(w7)--(w8)--(w6)--(w3)--(w7);
		
		\draw[black, thin] (0,0) circle (0.7);
		\draw[black, thin] (0,1.7) circle (0.7);
		\draw[black, thin] (-1.5,-0.125) circle (0.6);
		\draw[black, thin] (1.85,-0.125) circle (0.7);
		\draw[black, thin] (3.4,-0.125) circle (0.6);
		\draw[black, thin] (w5) circle (0.3);
	\end{tikzpicture}
	\caption{A well-partitioned chordal graph.}
	\label{fig:ex-wpc}
\end{figure}

That said, it is not difficult to observe that the graphs constructed in the \NP-hardness proofs in the works~\cite{HSS12,Sil19}
are in fact well-partitioned chordal graphs --- we immediately narrowed down the complexity gaps
of these problems from 
$\mathsc{Chordal} \setminus \mathsc{Split} \mbox{ to } \mathsc{Well-partitioned chordal} \setminus \mathsc{Split}.$

\begin{figure}
	\centering
	\scalebox{0.8}{
		\newcommand\drawedge[2]{
			\draw[edge] #1 to #2;
		}
		\newcommand\drawdottededge[2]{
			\draw[nonedge] #1 to #2;
		}
		\def\nameindent{-.75}
		\def\nameindentsmall{-0.5}
		\newcommand\obstruction{O}
		\newcommand\ABar{$\obstruction_1$}
		\newcommand\FourFan{$\obstruction_2$}
		\newcommand\AhnOne{$\obstruction_3$}
		\newcommand\AhnTwo{$\obstruction_4$}
		\newcommand\Holes{$H_k$, $k \ge 4$}
		\begin{tikzpicture}	
			\begin{scope}[]
				\node [] (name) at (0.75, \nameindent) {\ABar};

				\node [vtx] (a) at (0, 0) {};
				\node [vtx] (b) at (1.5, 0) {};
				\node [vtx] (c) at (0, 1) {};
				\node [vtx] (d) at (1.5, 1) {};
				\node [vtx] (e) at (0, 2) {};
				\node [vtx] (f) at (1.5, 2) {};
	
				\drawedge{(a)}{(b)}
				\drawedge{(a)}{(c)}
				\drawedge{(a)}{(d)}
				\drawedge{(b)}{(d)}
				\drawedge{(c)}{(d)}
				\drawedge{(c)}{(e)}
				\drawedge{(c)}{(f)}
				\drawedge{(d)}{(f)}
				\drawedge{(e)}{(f)}
			\end{scope}
	
	
	
			\begin{scope}[xshift=3.5cm]
				\node[] (name) at (0.75, \nameindent) {\FourFan};
		
				\node [vtx] (a) at (0, 0) {};
				\node [vtx] (b) at (1.5, 0) {};
				\node [vtx] (c) at (0, 1) {};
				\node [vtx] (d) at (1.5, 1) {};
				\node [vtx] (e) at (0, 2) {};
				\node [vtx] (f) at (1.5, 2) {};
		
				\drawedge{(a)}{(b)}
				\drawedge{(a)}{(c)}
				\drawedge{(a)}{(d)}
				\drawedge{(b)}{(d)}
				\drawedge{(c)}{(d)}
				\drawedge{(c)}{(e)}
				\drawedge{(d)}{(e)}
				\drawedge{(d)}{(f)}
				\drawedge{(e)}{(f)}
			\end{scope}
	
	
			\begin{scope}[xshift=7cm]
				\node [] (name) at (1.5, \nameindent) {\AhnOne};
		
				\node [vtx] (a) at (0, 0) {};
				\node [vtx] (b) at (1.5, 0) {};
				\node [vtx] (c) at (3, 0) {};
				\node [vtx] (d) at (0.75, 1) {};
				\node [vtx] (e) at (2.25, 1) {};
				\node [vtx] (f) at (1.5, 2) {};
				\node [vtx] (g) at (3, 2) {};
		
				\drawedge{(a)}{(b)}
				\drawedge{(a)}{(d)}
				\drawedge{(a)}{(e)}
				\drawedge{(b)}{(c)}
				\drawedge{(b)}{(d)}
				\drawedge{(b)}{(e)}
				\drawedge{(c)}{(d)}
				\drawedge{(c)}{(e)}
				\drawedge{(d)}{(e)}
				\drawedge{(d)}{(f)}
				\drawedge{(e)}{(f)}
				\drawedge{(e)}{(g)}
				\drawedge{(f)}{(g)}
			\end{scope}
	
			\begin{scope}[xshift=12cm]
				\node [] (name) at (1.75, \nameindent) {\AhnTwo};
		
				\node [vtx] (a) at (1, 0) {};
				\node [vtx] (b) at (2.5, 0) {};
				\node [vtx] (c) at (0, 0.5) {};
				\node [vtx] (d) at (3.5, 0.5) {};
				\node [vtx] (e) at (1, 1) {};
				\node [vtx] (f) at (2.5, 1) {};
				\node [vtx] (g) at (0, 1.5) {};
				\node [vtx] (h) at (1.75, 2) {};
				\node [vtx] (i) at (3.5, 1.5) {};
		
				\drawedge{(a)}{(b)}
				\drawedge{(a)}{(c)}
				\drawedge{(a)}{(e)}
				\drawedge{(a)}{(f)}
				\drawedge{(a)}{(h)}
				\drawedge{(b)}{(d)}
				\drawedge{(b)}{(e)}
				\drawedge{(b)}{(f)}
				\drawedge{(b)}{(h)}
				\drawedge{(c)}{(e)}
				\drawedge{(c)}{(g)}
				\drawedge{(d)}{(f)}
				\drawedge{(d)}{(i)}
				\drawedge{(e)}{(g)}
				\drawedge{(e)}{(f)}
				\drawedge{(e)}{(h)}
				\drawedge{(f)}{(h)}
				\drawedge{(f)}{(i)}
			\end{scope}
	
			\begin{scope}[yshift=4cm, xshift=.75cm]
				\node [] (name) at (3, \nameindent) {$W_{1, t}$, $t \ge 0$};
				\node [vtx] (a) at (0.75, 0) {};
				\node [vtx, above left of = a] (b) {};
				\node [vtx, above right of = b] (c) {};
				\node [vtx, above right of = a] (d) {};
				\node [vtx, above right of = d, node distance=.75cm] (t12) {};
				\node [vtx, below right of = t12, node distance=.75cm] (t13) {};
				\node [vtx, right of = t13] (t31) {};
				\node [vtx, above right of = t31, node distance=.75cm] (t32) {};
				\node [vtx, below right of = t32, node distance=.75cm] (t33) {};
				\node [vtx, above right of = t33] (y) {};
				\node [vtx, below right of = y] (z) {};
				\node [vtx, below left of = z] (w) {};
		
				\drawedge{(a)}{(b)}
				\drawedge{(a)}{(d)}
				\drawedge{(b)}{(c)}
				\drawedge{(b)}{(d)}
				\drawedge{(c)}{(d)}
				\drawedge{(d)}{(t12)}
		
				\drawedge{(d)}{(t13)}

				\drawedge{(t12)}{(t13)}
				
				\drawdottededge{(t13)}{(t31)}
				\drawedge{(t31)}{(t32)}
				\drawedge{(t31)}{(t33)}
				\drawedge{(t32)}{(t33)}
		
				\drawedge{(w)}{(t33)}
				\drawedge{(w)}{(z)}
				\drawedge{(t33)}{(y)}
				\drawedge{(t33)}{(z)}
				\drawedge{(y)}{(z)}
		
				\draw [decorate,decoration={brace,amplitude=10pt,raise=5pt}]
						(t33) -- (d) node [black,midway,yshift=-.5cm] {{\small $t$ triangles}};
		
			\end{scope}
	
			\begin{scope}[yshift=4cm, xshift=9cm]
				\node [] (name) at (3, \nameindent) {$W_{2, t}$, $t \ge 0$};
				\node [vtx] (a) at (0.75, 0) {};
				\node [vtx, above left of = a] (b) {};
				\node [vtx, above right of = b] (c) {};
				\node [vtx, above right of = a] (d) {};
				\node [vtx, above left of = b] (e) {};
				\node [vtx, above right of = e] (f) {};
				\node [vtx, above right of = d, node distance=.75cm] (t12) {};
				\node [vtx, below right of = t12, node distance=.75cm] (t13) {};
				\node [vtx, right of = t13] (t31) {};
				\node [vtx, above right of = t31, node distance=.75cm] (t32) {};
				\node [vtx, below right of = t32, node distance=.75cm] (t33) {};
				\node [vtx, above right of = t33] (y) {};
				\node [vtx, below right of = y] (z) {};
				\node [vtx, below left of = z] (w) {};
		
				\drawedge{(a)}{(b)}
				\drawedge{(a)}{(c)}
				\drawedge{(a)}{(d)}
				\drawedge{(b)}{(c)}
				\drawedge{(b)}{(d)}
				\drawedge{(c)}{(d)}
		
				\drawedge{(b)}{(e)}
				\drawedge{(c)}{(e)}
				\drawedge{(c)}{(f)}
				\drawedge{(e)}{(f)}		
		
		
				\drawedge{(d)}{(t12)}
		
				\drawedge{(d)}{(t13)}

				\drawedge{(t12)}{(t13)}
				
				\drawdottededge{(t13)}{(t31)}
				\drawedge{(t31)}{(t32)}
				\drawedge{(t31)}{(t33)}
				\drawedge{(t32)}{(t33)}
		
				\drawedge{(w)}{(t33)}
				\drawedge{(w)}{(z)}
				\drawedge{(t33)}{(y)}
				\drawedge{(t33)}{(z)}
				\drawedge{(y)}{(z)}
		
				\draw [decorate,decoration={brace,amplitude=10pt,raise=5pt}]
						(t33) -- (d) node [black,midway,yshift=-.5cm] {{\small $t$ triangles}};
		
			\end{scope}
	
			\begin{scope}[yshift=7cm, xshift=.75cm]
				\node [] (name) at (3, \nameindent) {$W_{3, t}$, $t \ge 0$};
				\node [vtx] (a) at (0.75, 0) {};
				\node [vtx, above left of = a] (b) {};
				\node [vtx, above right of = b] (c) {};
				\node [vtx, above right of = a] (d) {};
				\node [vtx, above left of = b] (e) {};
				\node [vtx, above right of = e] (f) {};
				\node [vtx, above right of = d, node distance=.75cm] (t12) {};
				\node [vtx, below right of = t12, node distance=.75cm] (t13) {};
				\node [vtx, right of = t13] (t31) {};
				\node [vtx, above right of = t31, node distance=.75cm] (t32) {};
				\node [vtx, below right of = t32, node distance=.75cm] (t33) {};
				\node [vtx, above right of = t33] (y) {};
				\node [vtx, below right of = y] (z) {};
				\node [vtx, below left of = z] (w) {};
				\node [vtx, above right of = z] (u) {};
				\node [vtx, above right of = y] (v) {};
		
				\drawedge{(a)}{(b)}
				\drawedge{(a)}{(c)}
				\drawedge{(a)}{(d)}
				\drawedge{(b)}{(c)}
				\drawedge{(b)}{(d)}
				\drawedge{(c)}{(d)}
		
				\drawedge{(b)}{(e)}
				\drawedge{(c)}{(e)}
				\drawedge{(c)}{(f)}
				\drawedge{(e)}{(f)}	
		
				\drawedge{(w)}{(y)}
				\drawedge{(y)}{(u)}
				\drawedge{(y)}{(v)}
				\drawedge{(z)}{(u)}
				\drawedge{(u)}{(v)}
		
		
				\drawedge{(d)}{(t12)}
		
				\drawedge{(d)}{(t13)}

				\drawedge{(t12)}{(t13)}
				
				\drawdottededge{(t13)}{(t31)}
				\drawedge{(t31)}{(t32)}
				\drawedge{(t31)}{(t33)}
				\drawedge{(t32)}{(t33)}
		
				\drawedge{(w)}{(t33)}
				\drawedge{(w)}{(z)}
				\drawedge{(t33)}{(y)}
				\drawedge{(t33)}{(z)}
				\drawedge{(y)}{(z)}
		
				\draw [decorate,decoration={brace,amplitude=10pt,raise=5pt}]
						(t33) -- (d) node [black,midway,yshift=-.5cm] {{\small $t$ triangles}};
		
			\end{scope}
	
			\begin{scope}[yshift=7cm, xshift=11.25cm]
				\node [] (name) at (1, \nameindentsmall) {\Holes};
				\node [vtx] (b) at (.75, 0) {};
				\node [vtx, above left of = b] (c) {};
				\node [vtx, above right of = c] (d) {};
				\node [vtx, above right of = b] (a) {};
		
				\drawedge{(b)}{(c)}
				\drawedge{(a)}{(b)}
				\drawedge{(c)}{(d)}
				\path[draw, thick, dotted]
				(a) edge node {} (d);
		
			\end{scope}
		\end{tikzpicture}
	}

	\caption{The set of obstructions $\obswp$ for well-partitioned chordal graphs.}
	\label{fig:obstructions}
\end{figure}

The main structural contribution of this work is a characterization of well-partitioned chordal graphs
by forbidden induced subgraphs. We also provide a polynomial-time recognition algorithm. 
We list the set $\obswp$ of obstructions in Figure~\ref{fig:obstructions}. 

\begin{theorem}\label{thm:obstructions}
	A graph is a well-partitioned chordal graph if and only if it has no induced subgraph isomorphic to a graph in~$\obswp$.
	Furthermore, there is a polynomial-time algorithm that given a graph $G$, output
	either an induced subgraph of $G$ isomorphic to a graph in $\obswp$, or 
	a partition tree for each connected component which confirms that $G$ is a well-partitioned chordal graph.
\end{theorem}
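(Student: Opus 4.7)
I would split the proof into the (easy) necessity and the (hard) sufficiency+algorithm, tied together by a local structure lemma.

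\textbf{Necessity.} First I verify that no graph in $\obswp$ is well-partitioned chordal. The holes $H_k$ ($k \ge 4$) are not chordal, so certainly not well-partitioned chordal. For each of $O_1, O_2, O_3, O_4$ and for each member of the three families $W_{1,t}, W_{2,t}, W_{3,t}$ I assume toward contradiction that a partition tree exists; using that adjacent bags $P,Q$ must induce a complete bipartite graph between $\bd(P,Q)$ and $\bd(Q,P)$, I locate two vertices that are forced into the same bag but have incomparable neighborhoods outside the bag, or two vertices in adjacent boundaries that must be both adjacent and nonadjacent. This is finite case analysis in the small cases and a parametric argument for the $W_{i,t}$ families (the ``chain of $t$ triangles'' forces a specific sequence of bags, and the branching gadgets at both ends clash at some bag). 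I expect this to be routine.

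\textbf{Sufficiency via a local structure lemma.} The heart of the argument is the following statement, which I would prove by induction (or equivalently by a careful peeling procedure):

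\emph{Key Lemma.} Let $G$ be a connected chordal graph containing no induced subgraph isomorphic to any graph in $\obswp$. Then either $G$ is a clique, or there exist a maximal clique $K$ and a minimal $K$-containing separator $S \subsetneq K$ such that (i) every vertex of $K \setminus S$ has no neighbor outside $K$, and (ii) every vertex of $S$ has the same neighborhood in $V(G) \setminus K$.

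The role of this lemma is clear: $K$ becomes a leaf bag of the partition tree, $S = \bd(K,\text{parent})$, and (ii) guarantees the complete bipartite boundary condition. To prove it, I start from a simplicial vertex $v$, let $K$ be the maximal clique containing $v$, let $S = N(K \setminus \{v'\}) \cap (V(G) \setminus K)$'s closed preimage$\ldots$ more concretely, I use the clique tree of $G$ to pick a leaf maximal clique $K$ with separator $S$ toward its neighbor, and I argue that (i) and (ii) hold. If (i) fails, I find a vertex $u \in K$ with a neighbor $w$ outside $K$ realising a $P_4$-like pattern that, combined with the chordal neighborhood of $w$, already forces one of $O_1, O_2$, or $W_{1,0}$. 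If (ii) fails, there exist $s_1, s_2 \in S$ and $w \in V(G) \setminus K$ with $s_1 w \in E(G)$ and $s_2 w \notin E(G)$; then chasing how $w$ sits in the clique tree and how the discrepancy propagates through the separators yields either $O_3$, $O_4$, or a member of the families $W_{1,t}, W_{2,t}, W_{3,t}$ depending on how many consecutive separators are traversed before the two neighborhoods re-synchronize.

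\textbf{Main obstacle.} The bulk of the work, and the step I expect to be the main difficulty, is the obstruction extraction when (ii) fails. The $W_{i,t}$ families precisely encode the scenario where the witness $w$ lies $t+1$ separators away and where the ``head'' and ``tail'' structure of the chain is determined by small local configurations (a single vertex, a triangle, or a $K_4$-like gadget); I would have to show that if no obstruction appears along the chain, the discrepancy must resolve in a bounded way. This requires a careful inductive argument along the path in the clique tree from $K$ to the bag containing $w$, with a case split on the structure of the separators and the vertices attached at each end. Apart from this, the sufficiency direction is straightforward.

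\textbf{From lemma to algorithm.} The lemma is constructive: each ingredient (chordality test, clique tree, search for $K$, $S$, and verification of (i) and (ii), and obstruction extraction on failure) runs in polynomial time. The recognition algorithm repeatedly strips off a leaf bag using the lemma, attaches it in the partition tree to a bag of the previously computed tree that contains $S$ as a subset (such a bag exists because $S$ was a clique separator and, by induction, its vertices were placed consistently in the partial partition tree), and stops when the remaining graph is a single clique. If at any step (i) or (ii) fails we return the extracted obstruction. Running this per connected component yields the stated polynomial-time algorithm.
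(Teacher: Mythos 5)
Your necessity direction is fine and matches the paper's in spirit (the paper argues via the observation that a triangle meets at most two bags of any partition tree, plus an induction on $t$ that contracts one triangle of the chain of $W_{s,t}$). The genuine gap is in the gluing step: ``attaches it in the partition tree to a bag of the previously computed tree that contains $S$ as a subset (such a bag exists because $S$ was a clique separator and, by induction, its vertices were placed consistently).'' No such induction hypothesis is available: partition trees are far from unique, and the recursive call on $G - (K \setminus S)$ is free to return a tree in which $S$ is split between two adjacent bags. Concretely, take $S = \{s_1, s_2\}$, a vertex $a$ adjacent to both, a pendant $b$ on $a$, and $K = \{u, s_1, s_2\}$ with $u$ adjacent to exactly $s_1, s_2$; then $G$ satisfies your Key Lemma at $K$ and is well-partitioned chordal, but $G - u$ admits both the tree $\{s_1,s_2\}$--$\{a\}$--$\{b\}$ and the tree with center $\{s_1,a\}$ and leaves $\{s_2\}, \{b\}$. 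If the recursion returns the latter, the leaf $K \setminus S$ cannot be attached anywhere, and repairing this requires restructuring the returned tree. Crucially, whether restructuring is possible is not decidable locally at $K$ and $S$: the witness of impossibility can sit arbitrarily far away in the tree, which is exactly why the obstruction families $W_{s,t}$ have unbounded $t$. So the parenthetical assumption silently absorbs most of the actual proof.

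The paper resolves this with machinery your proposal lacks. It inserts simplicial vertices one at a time along a perfect elimination ordering (so the new neighborhood meets at most two adjacent bags $C_1, C_2$), and when the current tree is incompatible it walks a maximal good boundary-crossing path through $\calT$ (Lemma~\ref{lem:extendgood}, which at each step either certifies goodness or extracts an obstruction assembled from the partial obstructions $W^-_{s,t}$ with a terminal vertex), then repeatedly applies Lemma~\ref{lem:shorten:path} to merge boundaries backwards along that path until no vertex crosses $\bdd{C_2}{C_1}$, and finally inserts $v$ via Lemma~\ref{lem:base:nocrossing}; the case analysis is carried out in Lemmas~\ref{lem:charcase1}--\ref{lem:charcase3}. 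The obstruction-extraction difficulty you flag as the ``main obstacle'' is real, but it is only half of the work: the other half, entirely absent from your sketch, is proving that on obstruction-free inputs the inherited tree can always be reshaped so the induction goes through. Your Key Lemma itself is plausibly true (for a leaf bag $X$ with parent $Y$ and $\bdd{X}{Y} \subsetneq X$, take $K = X$ and $S = \bdd{X}{Y}$), but without the restructuring step it does not yield an algorithm. A further small ambiguity: if you strip all of $K$ rather than $K \setminus S$, then $S$ is no longer present in the recursed graph, and the attachment target is undefined.
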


Before we proceed with the discussion of the algorithmic results of this paper, 
we would like to briefly touch on the relationship of well-partitioned chordal graphs and width parameters.
Each split graph is a well-partitioned chordal graph, 
and there are split graphs of whose \emph{maximum induced matching width} (mim-width) 
depends linearly on the number of vertices~\cite{MengelMIM}.
This rules out the applicability of any algorithmic meta-theorem based on one of the common width parameters
such as tree-width or clique-width,
to the class of well-partitioned chordal graphs.
It is known that mim-width is a lower bound for them~\cite{VatshelleThesis}.

Besides narrowing the complexity gap between the classes of chordal and split graphs,
the class of well-partitioned chordal graphs can also be useful as a step towards determining the 
yet unresolved complexity of a problem $\Pi$ on chordal graphs when it 
is known that $\Pi$ is easy on split graphs. This is the case in our current work.
Specifically, we study the the \probDP{} problem, formally defined as follows,
and generalizations thereof. 
Two paths $P_1$ and $P_2$ are called \emph{internally vertex-disjoint},
if for $i \in [2]$, no internal vertex of $P_i$ is contained in $P_{3-i}$.
(Note that this excludes the possibility that an endpoint of one path 
is used as an internal vertex in the other path.)
\fancyproblemdef
	{Disjoint Paths}
	{A graph $G$, a set $\mathcal{X}=\{(s_1,t_1),\ldots,(s_k,t_k)\}$ of 
	$k$ pairs of vertices of $G$, called \emph{terminals}.}
	{Does $G$ contain $k$ pairwise internally vertex-disjoint paths $P_1,\ldots,P_k$ 
	such that for all $i\in[k]$, $P_i$ is an $(s_i,t_i)$-path?}

This problem has already been shown by Karp to be \NP-complete~\cite{DPNPC}, and
as a cornerstone result in the early days of fixed-parameter tractability theory,
Robertson and Seymour showed that \textsc{Disjoint Paths} 
parameterized by $k$ is in \FPT~\cite{graphminors13}.
The dependence of the runtime on the number of vertices $n$ in the input graph
is cubic in Robertson and Seymour's algorithm, and
Kawarabayashi et al.\ improved this to a quadratic dependence on~$n$~\cite{Kawarabayashi2011}.
From the viewpoint of kernelization complexity, Bodlaender et al.\ showed that \probDP{} 
does not admit a polynomial kernel unless $\NP\subseteq\coNP/\poly$~\cite{BTY2011}.

Restricting the problem to chordal and split graphs,
Heggernes et al.\ showed that \probDP{} remains \NP-complete on split graphs,
and that it admits a polynomial kernel parameterized by $k$~\cite{Heg15},
and Kammer and Tholey showed that it has an \FPT-algorithm with linear dependence
on the size of the input chordal graph~\cite{DPFPTCHORDAL}.
The question whether \probDP{} has a polynomial kernel on chordal graphs remains open.
We go one step towards such a polynomial kernel, by showing that \probDP{} 
has a polynomial kernel on well-partitioned chordal graphs; generalizing the polynomial 
kernel on split graphs~\cite{Heg15}.

We also study a generalization of the \probDP{} problem, where in a solution, each path
$P_i$ can only use a restricted set of vertices $U_i$, which is specified for each terminal pair at the input.
This problem was recently introduced by Belmonte et al.\ and given the name \SRDP~\cite{patterns2014}.
Since this problem contains \probDP{} as a special case 
(setting all domains equal to the whole vertex set), it is \NP-complete.
Belmonte et al.\ showed that \SRDP{} parameterized by $k$ is in \XP{} on chordal graphs, 
and leave as an open question whether it is in \FPT{} or \W[1]-hard on chordal graphs.
Towards showing the former, we give an \FPT{}-algorithm on well-partitioned chordal graphs.
In particular, given a partition tree,
our algorithm runs in time $2^{\calO(k \log k)}\cdot n$,
so the runtime only depends linearly on the number of vertices in the input graph.
While we do not settle the kernelization complexity of \SRDP{} on well-partitioned chordal graphs,
we observe that our \FPT{}-algorithm implies a polynomial kernel on split graphs.
We summarize these results in Table~\ref{tab:results}.

\newcommand\ourmark{$^\star$}
\begin{table}
	\centering
	\begin{tabular}{r|c|c}
		Graph Class & \probDP{} & \SRDP \\
		\hline
		Chordal & linear \FPT~\cite{DPFPTCHORDAL} & \XP~\cite{patterns2014} \\
		\hline
		Well-partitioned chordal & $\calO(k^3)$ kernel [T.\,\ref{thm:dp:kernel},\,\ref{thm:tdp:kernel}] 
			& linear$^\star$ \FPT{} [T.\,\ref{thm:srdp},\,\ref{thm:srtdp}] \\
		\hline
		Split & $\calO(k^2)$ kernel~\cite{Heg15} & $\calO(k^2)$ kernel [C.\,\ref{cor:srdp:split}]
	\end{tabular}
	\caption{Summary of some results about the parameterized complexity results of the \probDP{} and \SRDP{} problems
	parameterized by the number $k$ of terminal pairs
	on split, well-partitioned chordal, and chordal graphs. 
	Size bounds for kernels are in terms of the number of \emph{vertices} of the kernelized instances.
	$^\star$Assuming that we are given a partition tree.}
	\label{tab:results}
\end{table}

Finally, we also consider the \SRDCS{} problem where we are given $k$ terminal \emph{sets} instead of pairs, and $k$ domains,
and the question is whether there are $k$ pairwise disjoint connected subgraphs, each one connecting one of the terminal sets,
using only vertices from the specified domain.
This problem was also introduced in~\cite{patterns2014} and shown to be in \XP{} on chordal graphs,
when the parameter is the total number of vertices in all terminal sets.
We observe that the ideas we use in our algorithm for the path problems can be used to show that
\SRDCS{} on well-partitioned chordal graphs is in \FPT{} with this parameter.

\plainparagraph{The case of coinciding terminal pairs.} 
We want to point out that the \probDP{} is studied in several variants in the literature.
According to the definition that we give here, pairs of terminal vertices may coincide,
i.e.\ it may happen that for $i \neq j$, $\{s_i, t_i\} = \{s_j, t_j\} \eqdef \{x, y\}$. 
If the edge $xy$ is present in the input graph, then it may be used both as the path $P_i$ and as the path $P_j$
in the solution, without violating the definition. 
However, as pointed out in e.g.~\cite{Heg15}, it is natural to impose the additional condition that
all paths in a solution have to be pairwise distinct. 
We study both variants of the \textsc{(Set-Restricted) Disjoint Paths} problem,
and refer to the variant that requires pairwise distinct paths in a solution as 
\textsc{(Set-Restricted) Totally Disjoint Paths}.

\section{Preliminaries}\label{sec2}
For a positive integer $n$, we let $[n]\defeq \{1, 2, \ldots, n\}$.
For a set $X$ and an integer $k$, we denote by $\binom{X}{k}$ the size-$k$ subsets of $X$.

A \emph{graph} $G$ is a pair of a \emph{vertex set} $V(G)$ and an \emph{edge set} $E(G) \subseteq \binom{V(G)}{2}$.
All graphs considered in this paper are finite, i.e.\ their vertex sets are finite.
For an edge $\{u, v\} \in E(G)$, we call $u$ and $v$ its \emph{endpoints} and we use the shorthand `$uv$' for `$\{u, v\}$'.
Let $G$ and $H$ be two graphs. 
We say that \emph{$G$ is isomorphic to $H$} if there is a bijection $\phi \colon V(G) \to V(H)$ 
such that for all $u, v \in V(G)$, $uv \in E(G)$ if and only if $\phi(u)\phi(v) \in E(H)$.
We say that $H$ is a \emph{subgraph} of $G$, denoted by $H \subseteq G$, if $V(H) \subseteq V(G)$ and $E(H) \subseteq E(G)$.

For a vertex $v$ of a graph $G$, $N_G(v) \defeq \{w \in V(G) \mid vw \in E(G)\}$ is the set of \emph{neighbors} of $v$ in $G$,
and we let $N_G[v] \defeq N_G(v) \cup \{v\}$.
The \emph{degree} of $v$ is $\deg_G(v) \defeq \card{N_G(v)}$.
Given a set $X \subseteq V(G)$, we let $N_G(X) \defeq \bigcup_{v \in X} N_G(v) \setminus X$ and $N_G[X] \defeq N_G(X) \cup X$.
In all of the above, we may drop $G$ as a subscript if it is clear from the context.
The \emph{subgraph induced by $X$}, denoted by $G[X]$, is the graph $(X, E(G) \cap \binom{X}{2})$.
We denote by $G - X$ the graph $G[V(G)\setminus X]$, and for a single vertex $x \in V(G)$, we use the shorthand `$G - x$' for `$G - \{x\}$'. 
For two sets $X,Y\subseteq V(G)$, we denote by $G[X,Y]$ the graph $(X \cup Y, \{xy \in E(G) \mid x \in X, y \in Y\})$.
We say that $X$ is \emph{complete to $Y$} if $X\cap Y=\emptyset$ and each vertex in $X$ is adjacent to every vertex in $Y$.

Let $G$ be a graph.
We say that $G$ is \emph{trivial} if $\abs{V(G)} = 1$.
$G$ is called \emph{complete} if $E(G) = \binom{V(G)}{2}$, and \emph{empty} if $E(G) = \emptyset$.
A set $X \subseteq V(G)$ is a \emph{clique} if $G[X]$ is complete,
and an \emph{independent set} if $G[X]$ is empty.
A clique of size $3$ is called a \emph{triangle}.
A graph $G$ is called \emph{bipartite} there is a $2$-partition $(A, B)$ of $V(G)$, called the \emph{bipartition of $G$}, such that $A$ and $B$ are independent sets in $G$.
A bipartite graph $G$ on bipartition $(A, B)$ is called \emph{complete bipartite} if $A$ is complete to $B$.
For integers $n$ and $m$, we denote by $K_{n, m}$ a complete bipartite graph with bipartition $(A, B)$ such that $\card{A} = n$ and $\card{B} = m$.
A graph is a \emph{star} if it is either trivial or isomorphic to $K_{1,n}$ for some positive integer $n$.

A graph $G$ is \emph{connected} if for each $2$-partition $(X, Y)$ of $V(G)$ with $X \neq \emptyset$ and $Y \neq \emptyset$, 
there is a pair $x \in X$, $y \in Y$ such that $xy \in E(G)$.
A connected component of $G$ is a maximal connected subgraph of $G$. A vertex $v \in V(G)$ is a \emph{cut vertex} if $G - v$ has more connected components than $G$. A graph is \emph{$2$-connected} if it has no cut vertices. A \emph{block} of a graph $G$ is a maximal $2$-connected component of $G$.
A graph $G$ is called \emph{$2$-regular} if all vertices of $G$ are of degree $2$.
A connected $2$-regular graph is a cycle. A graph that has no cycle as a subgraph is called a \emph{forest}, 
a connected forest is a \emph{tree}, and a tree of maximum degree $2$ is a \emph{path}.
The vertices of degree one in a tree are called \emph{leaves} and the leaves of a path are its \emph{endpoints}.
A connected subgraph of a tree is called a \emph{subtree}.

A \emph{hole} in a graph $G$ is an induced cycle of $G$ of length at least $4$.
A graph is \emph{chordal} if it has no induced subgraph isomorphic to a hole.
A vertex is \emph{simplicial} if $N_G(v)$ is a clique.
We say that a graph $G$ has a \emph{perfect elimination ordering} 
$v_1,\ldots,v_n$ if $v_i$ is simplicial in $G[\{v_i, v_{i+1},\ldots,v_n\}]$ for each $i\in[n-1]$.
It is known that a graph is chordal if and only if it has a perfect elimination ordering~\cite{FulkersonGross1965}.
We will use the following hole detecting algorithm and an algorithm to generate a perfect elimination ordering of a chordal graph.
\begin{theorem}[Nikolopoulos and Palios~\cite{Nikolo2007}]\label{thm:nikolo}
Given a graph $G$, one can detect a hole in $G$ in time $\mathcal{O}(|V(G)|+|E(G)|^2)$, if one exists.
\end{theorem}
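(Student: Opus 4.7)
The plan is to prove the theorem by designing an explicit hole-detection algorithm and analyzing its running time. The starting point is a simple structural observation: if $C$ is a hole containing a vertex $v$, then $v$ has exactly two neighbors $a, b$ on $C$, the edge $ab$ is absent from $G$, and the remaining vertices of $C$ form an induced path from $a$ to $b$ whose internal vertices lie entirely in $V(G) \setminus N[v]$. Conversely, if there is a connected component $Q$ of $G - N[v]$ and two non-adjacent vertices $a, b \in N(v)$ each having a neighbor in $Q$, then a shortest path from $a$ to $b$ in $G[Q \cup \{a,b\}]$ whose only edges involving $a$ or $b$ are those joining them to $Q$ must be induced; combining it with $v$ yields a hole of length at least~$4$.

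Based on this, I would iterate over every vertex $v \in V(G)$ and perform three steps. First, compute $H_v := G - N[v]$ and its connected components in linear time. Second, for each component $Q$ compute $A_Q := N(Q) \cap N(v)$. Third, for each $a \in A_Q$ run a BFS in $G[Q \cup \{a\}]$ from the neighbors of $a$ in $Q$; from this single BFS we read off, for every other $b \in A_Q$ that is non-adjacent to $a$ in $G$, whether $b$ is reachable. The first $(a,b)$ pair for which reachability holds immediately produces a hole by concatenating the shortest $a$-$b$ path with $v$; correctness follows from the structural observation above, since no internal vertex of the shortest path can be adjacent to $v$ (it lies in $Q$) nor to $a$ or $b$ via a chord (shortest-path argument).

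The main obstacle is matching the claimed running time of $\mathcal{O}(|V(G)| + |E(G)|^2)$. A naive accounting runs one BFS per vertex and charges $\mathcal{O}(|V(G)|+|E(G)|)$ per iteration, giving $\mathcal{O}(|V(G)|\cdot(|V(G)|+|E(G)|))$, which is too slow. The refinement is to amortize the BFS work over the \emph{incidences} $(v,a)$ with $va \in E(G)$: each BFS is associated with such an incidence, and the total work can be charged to pairs of edges sharing an endpoint, yielding the $\mathcal{O}(|E(G)|^2)$ bound; isolated vertices and initialization give the additive $\mathcal{O}(|V(G)|)$ term. A careful bookkeeping, following the approach of Nikolopoulos and Palios, organizes the BFS executions so that shared structures (such as component decompositions and adjacency queries against $N(v)$) are not recomputed from scratch within each iteration, which is where the subtlety of the analysis lies.
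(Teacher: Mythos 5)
First, a point of order: the paper contains no proof of this statement at all --- Theorem~\ref{thm:nikolo} is imported verbatim from Nikolopoulos and Palios~\cite{Nikolo2007} and used as a black box, so the only meaningful comparison is with the cited work. Your route is genuinely different from theirs (their algorithm organizes the search around paths on three vertices and processes the graph edge-by-edge), whereas you use the classical per-vertex characterization: $G$ has a hole if and only if there exist a vertex $v$, a connected component $Q$ of $G - N[v]$, and non-adjacent $a, b \in N(v)$ each having a neighbor in $Q$. Your structural lemma is correct in both directions: on a hole $C$ the two $C$-neighbors of $v$ are non-adjacent (as $|C| \ge 4$ and $C$ is induced) and the interior of $C - v$ lies in a single component of $G - N[v]$; conversely a shortest $a$--$b$ path through $Q$ is induced, its internal vertices avoid $N[v]$, and closing it with $v$ gives an induced cycle on at least four vertices. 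So the algorithm is sound, and this is an acceptably elementary substitute for the published proof.

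Two repairs are needed, neither fatal. The real soft spot is the running-time analysis: your ``amortization over incidences $(v,a)$'' does not obviously cover the $\Theta(|V(G)|+|E(G)|)$ cost per vertex of building $G - N[v]$ and its components, and your closing appeal to ``careful bookkeeping, following the approach of Nikolopoulos and Palios'' is not available in a blind proof --- you cannot lean on the very paper you are reproving. Fortunately no clever charging is needed: delete isolated vertices in $\mathcal{O}(|V(G)|)$ preprocessing (they lie on no hole), after which $|V(G)| \le 2|E(G)|$, so the ``naive'' accounting you dismissed --- $\mathcal{O}(|V(G)|+|E(G)|)$ work per remaining vertex --- already sums to $\mathcal{O}(|E(G)|^2)$, giving exactly the claimed $\mathcal{O}(|V(G)|+|E(G)|^2)$. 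Second, your reachability step is vacuous: $Q$ is a connected component of $G - N[v]$, so every vertex of $Q$ is reachable from the $Q$-neighbors of any $a \in A_Q$, and hence \emph{every} non-adjacent pair $a, b \in A_Q$ yields a hole. Detection thus reduces to testing whether each $A_Q$ is a clique (done within the same budget using a marker array over $N(a)$, aborting at the first non-adjacent pair), and the BFS is needed only to extract the hole itself via a shortest path. With these two adjustments your proof is complete and arguably simpler than the cited one.
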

\begin{theorem}[Rose, Tarjan, and Lueker~\cite{Rose1976}]\label{thm:generate}
Given a graph $G$, one can generate a perfect elimination ordering of $G$ in time $\mathcal{O}(|V(G)|+|E(G)|)$, if one exists.
\end{theorem}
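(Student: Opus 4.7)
The plan is to run Lexicographic Breadth-First Search (LexBFS), which produces a linear ordering that is a perfect elimination ordering (PEO) whenever the input is chordal, and then verify in linear time whether the ordering returned is in fact a PEO. This combination both outputs a PEO when one exists and correctly rejects inputs that admit none.

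First, I would implement LexBFS: assign each vertex an initially empty label, then for $i = n, n-1, \ldots, 1$, select an unnumbered vertex $v$ with lexicographically largest label, assign it number $i$, and prepend $i$ to the label of every unnumbered neighbor of $v$. Maintaining the equivalence classes of vertices with identical labels as consecutive blocks of a doubly-linked list, each LexBFS step splits the affected classes along $N(v)$ using partition refinement, charging the work of each split to the edges scanned. Because every edge is examined a constant number of times across the execution, the total cost is $\mathcal{O}(|V(G)| + |E(G)|)$.

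Second, I would verify whether the produced ordering $v_1, \ldots, v_n$ is a PEO. For each $i$, let $w(v_i)$ denote the neighbor of $v_i$ in $\{v_{i+1}, \ldots, v_n\}$ of smallest index; the ordering is a PEO if and only if, for every $i$, $N_G(v_i) \cap \{v_{i+1}, \ldots, v_n\} \setminus \{w(v_i)\} \subseteq N_G(w(v_i))$. Bucketing the vertices $v_i$ by the value of $w(v_i)$ and then, for each $w$, comparing the aggregated later neighborhoods of its preimages against $N_G(w)$ through a single scan, performs this verification in $\mathcal{O}(|V(G)| + |E(G)|)$ total. If verification succeeds, we output the ordering; otherwise, we report that no PEO exists.

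The hard part will be proving the correctness of LexBFS on chordal graphs, i.e.\ that if $G$ is chordal then the ordering returned is a PEO. The key invariant is that if $v_i$ has two non-adjacent later neighbors $v_j$ and $v_k$, one can trace back through the LexBFS history, using the lexicographic choice rule, to construct a chordless path between $v_j$ and $v_k$ whose internal vertices avoid $N_G[v_i]$; closing this path through $v_i$ yields an induced cycle of length at least $4$, contradicting chordality. The verification step handles the non-chordal case transparently: if any PEO exists, the graph is chordal, LexBFS returns one, and verification accepts, so failure of verification correctly certifies that no PEO exists.
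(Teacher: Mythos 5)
The paper gives no proof of this statement: it is imported verbatim as a known result of Rose, Tarjan, and Lueker~\cite{Rose1976}, and your proposal is exactly the classical argument from that cited source --- linear-time LexBFS via partition refinement, the leftmost-later-neighbor test (checking $N_G(v_i) \cap \{v_{i+1},\ldots,v_n\} \setminus \{w(v_i)\} \subseteq N_G(w(v_i))$ for all $i$) as a linear-time PEO verifier, and the traceback lemma showing that on a chordal graph LexBFS outputs a perfect elimination ordering. Your outline, including the observation that verification makes the scheme correct on non-chordal inputs, is sound and coincides with the standard proof, so there is nothing to compare beyond noting agreement with the reference.
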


A graph $G$ is a \emph{split graph} if there is a $2$-partition $(C, I)$ of $V(G)$ such that $C$ is a clique and $I$ is an independent set.
Let $\calS$ be a family of subsets of some set.
The \emph{intersection graph of $\calS$} is the graph on vertex set $\calS$ and edge set $\{\{S, T\} \in \binom{\calS}{2} \mid S \cap T \neq \emptyset\}$.
It is well-known that each chordal graph is the intersection graph of vertex sets of subtrees of some tree.
The following graph is called a \emph{diamond}. Note that for all $s \in [3]$, $t \in \bN$, the graph $W_{s, t}$ in $\obswp$ (see Figure~\ref{fig:obstructions}) contains two diamonds as induced subgraphs.
\begin{center}
	\begin{tikzpicture}
		\node[vtx] (a) at (0, 0) {};
		\node[vtx, above left=of a] (b) {};
		\node[vtx, above right=of a] (d) {};
		\node[vtx, above right=of b] (c) {};
		
		\draw[edge] (a) to (b);
		\draw[edge] (b) to (c);
		\draw[edge] (c) to (d);
		\draw[edge] (d) to (a);
		\draw[edge] (a) to (c);
	\end{tikzpicture}
\end{center}

\section{Well-partitioned chordal graphs}\label{sec3}
A connected graph $G$ is a \emph{well-partitioned chordal graph} if there exist a partition $\calP$ of $V(G)$ and a tree $\calT$ having $\calP$ as a vertex set such that the following hold.
\begin{enumerate}[label={(\roman*)}]
	\item\label{def:wp:cliques} Each part $X \in \calP$ is a clique in $G$.
	\item\label{def:wp:tree:parent} For each edge $XY \in E(\calT)$, there are subsets $X' \subseteq X$ and $Y' \subseteq Y$ such that $E(G[X, Y]) = X' \times Y'$.
	\item\label{def:wp:tree:nbh} For each pair of distinct $X, Y \in V(\calT)$ with $XY\notin E(\calT)$, $E(G[X, Y])=\emptyset$.
\end{enumerate}
The tree $\calT$ is called a \emph{partition tree of $G$}, and the elements of $\calP$ are called its \emph{bags}. 
A graph is a well-partitioned chordal graph if all of its connected components are well-partitioned chordal graphs.
We remark that a well-partitioned chordal graph can have more than one partition tree.
Also, observe that well-partitioned chordal graphs are closed under taking induced subgraphs.

A useful concept when considering partition trees of well-partitioned chordal graphs is that of a \emph{boundary of a bag}. Let $\mathcal{T}$ be a partition tree of a well-partitioned chordal graph $G$ and let $X, Y \in V(\calT)$ be two bags that are adjacent in~$\mathcal{T}$. The \emph{boundary of $X$ with respect to $Y$}, denoted by $\bdd{X}{Y}$, is the set of vertices of~$X$ that have a neighbor in $Y$, i.e.
	\begin{align*}
		\bdd{X}{Y} \defeq \{x \in X \mid N_G(x) \cap Y \neq \emptyset\}.
	\end{align*}
By item~\ref{def:wp:tree:parent} of the definition of the class, we know that $\bdd{X}{Y}$ is complete to $\bdd{Y}{X}$.

We now consider the relation between well-partitioned chordal graphs and other well-studied classes of graphs. It is easy to see that every well-partitioned chordal graph $G$ is a chordal graph because every leaf of the partition tree of a component of $G$ contains a simplicial vertex of $G$, and after removing this vertex, the remaining graph is still a well-partitioned chordal graph. Thus, we may construct a perfect elemination ordering.
We show that, in fact, well-partitioned chordal graphs constitute a subclass of substar graphs. 
A graph is a \emph{substar graph}~\cite{Chang1993, Joos2014} if it is an intersection graph of substars of a tree.  

\begin{proposition}\label{prop:wpissubstar}
Every well-partitioned chordal graph is a substar graph.
\end{proposition}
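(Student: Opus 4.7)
\emph{Plan.} The plan is to exhibit, for every well-partitioned chordal graph $G$, a host tree $T$ and a family of substars $(S_v)_{v \in V(G)}$ of $T$ whose intersection graph is $G$. Assume first that $G$ is connected and fix a partition tree $\calT$ with bags $\calP$. I would take $T$ to be the subdivision of $\calT$ in which every edge is subdivided exactly once; concretely,
\[
	V(T) \defeq \{c_X : X \in \calP\} \cup \{e_{XY} : XY \in E(\calT)\}, \quad E(T) \defeq \{c_X e_{XY},\; c_Y e_{XY} : XY \in E(\calT)\},
\]
which is again a tree. For a vertex $v \in V(G)$ lying in the unique bag $X \in \calP$, let $S_v$ be the subgraph of $T$ induced by $\{c_X\} \cup \{e_{XY} : XY \in E(\calT),\; v \in \bdd{X}{Y}\}$. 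Each non-central vertex of $V(S_v)$ has $c_X$ as its only neighbour in $T$, so $S_v$ is a star (possibly trivial) centered at $c_X$, hence a substar of $T$.

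The second step is to verify that $V(S_u) \cap V(S_v) \neq \emptyset$ if and only if $uv \in E(G)$. If $u, v$ belong to the same bag $X$, they are adjacent in $G$ since $X$ is a clique, and $c_X \in V(S_u) \cap V(S_v)$. If $u \in X$ and $v \in Y$ with $X \neq Y$ and $uv \in E(G)$, then property~\ref{def:wp:tree:nbh} forces $XY \in E(\calT)$, while $u \in \bdd{X}{Y}$ and $v \in \bdd{Y}{X}$, so $e_{XY} \in V(S_u) \cap V(S_v)$. Conversely, suppose $z \in V(S_u) \cap V(S_v)$. If $z = c_X$ for some bag $X$, then $u, v \in X$ and $uv \in E(G)$. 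If $z = e_{XY}$, then each of $u, v$ lies in $\bdd{X}{Y} \cup \bdd{Y}{X}$; either they share a bag (and we are done), or one is in $\bdd{X}{Y}$ and the other in $\bdd{Y}{X}$, and the observation already recorded in the paper that $\bdd{X}{Y}$ is complete to $\bdd{Y}{X}$ yields $uv \in E(G)$.

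If $G$ is disconnected, I would perform the above construction in each component and then glue the resulting host trees into a single tree by introducing one new vertex joined to one node of each component tree; no substar contains this glue vertex, so no spurious intersections arise. The only delicate point is the choice of host tree: subdividing each edge of $\calT$ exactly once is what provides a dedicated leaf per adjacent-bag interface, allowing the ``boundary'' information that governs adjacency in $G$ to be encoded as simple intersection of stars in $T$; with that in place, the remaining verification is a routine case analysis along the lines above.
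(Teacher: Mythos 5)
Your proof is correct and follows essentially the same route as the paper: both take the $1$-subdivision of the partition tree as the host tree and assign to each vertex $v$ in a bag $X$ the star centered at $X$ whose leaves are the subdivision vertices $e_{XY}$ with $v \in \bdd{X}{Y}$, with the same case analysis for the intersection check. Your explicit handling of the disconnected case is a small, harmless addition the paper leaves implicit.
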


\begin{proof}
Let $G$ be a well-partitioned chordal graph with $V(G)=\{v_1,v_2,\ldots,v_n\}$ and a partition tree $\mathcal{T}$. We will exhibit a substar intersection model for $G$. That is, we will show that there exists a tree $\mathcal{T'}$ and $S_1,\ldots,S_n$ substars of $\mathcal{T'}$ such that $v_iv_j\in E(G)$ if and only if $V(S_i)\cap V(S_j)\neq\emptyset$.

Let $\mathcal{T'}$ be the tree obtained from $\mathcal{T}$ by the $1$-subdivision of every edge. 
We denote by $v_{XY} \in V(\calT')$ the vertex originated from the $1$-subdivision of the edge $XY \in E(\mathcal{T})$. 
Note that $N_{\mathcal{T'}}(v_{XY})=\{X,Y\}$. 
For every $v_i\in V(G)$, we create a substar of $\mathcal{T'}$ in the following way. Let $B\in V(\calT)$ be the bag containing~$v_i$. Then $S_i$ is a star with the center $B$ and the leaf set $\{v_{BY}~|~v_i\in\bdd{B}{Y}\}$.

To see that this is indeed an intersection model for $G$, let $v_iv_j\in E(G)$. If there exists $B\in V(\calT)$ such that $v_i,v_j\in B$, then $B\in V(S_i)\cap V(S_j)$. If $v_i$ and $v_j$ are not contained in the same bag, by item~\ref{def:wp:tree:parent}, there exist $A,B\in V(\calT)$ such that $v_i\in A$, $v_j\in B$ and $AB\in E_\calT$. Then, $v_{AB}\in V(S_i)\cap V(S_j)$. In both cases we have that $V(S_i)\cap V(S_j)\neq\emptyset$. Now suppose $V(S_i)\cap V(S_j)\neq\emptyset$. Note that, by construction, two stars that intersect either have the same center or they intersect in a vertex that is a leaf of both of them. If $S_i$ and $S_j$ have the same center~$B$, then $v_i,v_j\in B$ and hence, by item~\ref{def:wp:cliques}, $v_iv_j\in E(G)$. If $S_i$ and $S_j$ have a common leaf, then this leaf is a vertex originated by the $1$-subdivision of an edge. Then, there exist $A,B\in V(\calT)$ such that $v_i\in \bdd{A}{B}$ and $v_j\in\bdd{B}{A}$ and thus, by item~\ref{def:wp:tree:parent}, $v_iv_j\in E(G)$.
\end{proof}

From the definition of well-partitioned chordal graphs, one can also see that every split graph is a well-partitioned chordal graph. Indeed, if $G$ is a split graph with clique $K$ and independent set $S$, the partition tree of $G$ will be a star, with the clique $K$ as its central bag and each vertex of $S$ contained in a different leaf bag. We show that, in fact, every starlike graph is a well-partitioned chordal graph. A \emph{starlike graph}~\cite{STARLIKE} is an intersection graph of substars of a star.

\begin{proposition}\label{prop:starlikeiswp}
Every starlike graph is a well-partitioned chordal graph.
\end{proposition}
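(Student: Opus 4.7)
The plan is to construct an explicit partition tree from the substar intersection model of a starlike graph. First I would reduce to the connected case: if $G$ is disconnected, each connected component inherits a substar intersection model (in the same host star), so a partition tree for each component suffices. Assume then that $G$ is connected, and fix a star $S$ with center $c$ and leaves $\ell_1, \ldots, \ell_m$ together with substars $S_1, \ldots, S_n$ of $S$ realizing $G$, so that $v_iv_j \in E(G)$ if and only if $V(S_i)\cap V(S_j)\neq\emptyset$.

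The key observation is that every substar of $S$ either contains $c$ or consists of a single leaf. This suggests partitioning $V(G)$ as
\[
K \defeq \{v_i : c\in V(S_i)\},\qquad L_j \defeq \{v_i : V(S_i)=\{\ell_j\}\}\ \text{for}\ j\in[m],
\]
which yields pairwise disjoint sets that cover $V(G)$. I would then take as partition tree $\calT$ the star with central bag $K$ and one leaf bag $L_j$ for each non-empty $L_j$, when $K\neq\emptyset$. If $K=\emptyset$ then for distinct $j\neq j'$ the substars $\{\ell_j\}$ and $\{\ell_{j'}\}$ are disjoint, so the sets $L_j$ and $L_{j'}$ are non-adjacent, and connectivity of $G$ forces exactly one $L_j$ to be non-empty; in this degenerate case the partition tree is a single vertex whose bag is that $L_j$ (a clique).

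It remains to verify items~\ref{def:wp:cliques}--\ref{def:wp:tree:nbh} of the definition. For~\ref{def:wp:cliques}, all substars in $K$ share $c$, so $K$ is a clique, and all substars in $L_j$ equal $\{\ell_j\}$, so $L_j$ is a clique. For~\ref{def:wp:tree:nbh}, any two distinct leaf bags $L_j, L_{j'}$ with $j\neq j'$ have no edges between them since $\{\ell_j\}\cap\{\ell_{j'}\}=\emptyset$. For~\ref{def:wp:tree:parent}, applied to the edge $KL_j$ of $\calT$, set
\[
K' \defeq \{v_i\in K : \ell_j\in V(S_i)\},\qquad L_j' \defeq L_j.
\]
A vertex $v_i\in K$ is adjacent to a vertex $v_{i'}\in L_j$ iff $V(S_i)\cap\{\ell_j\}\neq\emptyset$ iff $v_i\in K'$, so $E(G[K,L_j])=K'\times L_j'$ as required. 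I do not anticipate a genuine obstacle: the only subtlety is isolating the $K=\emptyset$ case so that the star partition tree is well-defined, and connectivity of $G$ handles this cleanly.
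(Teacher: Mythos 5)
Your proof is correct and follows essentially the same route as the paper's: both construct a star-shaped partition tree with central bag $\{v_i : c\in V(S_i)\}$ and leaf bags grouping the vertices whose substars are a single fixed leaf, then verify items~(i)--(iii) directly (the paper argues item~(ii) via the vertices of each leaf bag being true twins, which is equivalent to your explicit choice of $K'$ and $L_j'$). Your additional handling of the degenerate case $K=\emptyset$ is a small point of extra care the paper elides, but it does not constitute a different approach.
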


\begin{proof}
Let $G$ be a starlike graph with $V(G)=\{v_1,\ldots,v_n\}$ and let $\calS$ be the host star of the substar intersection model of $G$ and $S_i$ be the substar of $\calS$ associated with vertex $v_i$. We may assume that $G$ is connected and every vertex of $\calS$ is contained in some substar of the intersection model. 

We know that $v_iv_j\in E(G)$ if and only if $V(S_i)\cap V(S_j)\neq\emptyset$. To show that $G$ is a well-partitioned chordal graph, we will construct a partition tree for $G$. Let $c$ be the center of the star $\calS$ and $f_1,\ldots,f_k$ be its leaves. The partition tree $\calT$ for $G$ will be a star with center $C$ and leaves $F_1,\ldots,F_k$ such that $C=\{v_i\in V(G)~|~c\in S_i\}$ and $F_j=\{v_i\in V(G)~|~V(S_i)=\{f_j\}\}$. Note that this is indeed a partition of the vertex set of $G$, since each substar of $\calS$ either contains the center or consists of a single leaf and every vertex of $\calS$ is contained in some substar of the intersection model. Now we show this is indeed a partition tree for $G$. Note that, by construction, each bag is a clique, so item~\ref{def:wp:cliques} holds. Also note that, for every $i$, if $v\in F_i$, then $N_G(v)\subseteq F_i\cup C$, thus item~\ref{def:wp:tree:nbh} of the definition holds. Finally, note that the vertices of $F_i$ are true twins in $G$, since the substars of $\calS$ corresponding to those vertices consist of a single vertex, namely $f_i$. Hence, item~\ref{def:wp:tree:parent} also holds. This concludes the proof that $\calT$ is a partition tree for $G$ and thus $G$ is a well-partitioned chordal graph.
\end{proof}

We will show that the graph $O_1$ in Figure~\ref{fig:obstructions} is not a well-partitioned chordal graph. On the other hand, it is not difficult to see that $O_1$ is a substar graph.
Also note that a path graph on $5$ vertices is a well-partitioned chordal graph but not a starlike graph.
These observations with Propositions~\ref{prop:wpissubstar} and~\ref{prop:starlikeiswp} show that we have the following hierarchy of graph classes between split graphs and chordal graphs:

\vspace{-0.2in}

\begin{center}
\begin{tabular}{m{0.4in}cm{0.6in}cm{1.3in}cm{0.5in}cm{0.5in}}
\begin{center}split graphs\end{center} & {\Large$\subsetneq$} & \begin{center}starlike graphs\end{center} & {\Large$\subsetneq$} & \begin{center}well-partitioned chordal graphs\end{center} & {\Large$\subsetneq$} & \begin{center}substar graphs\end{center} & {\Large$\subsetneq$} & \begin{center}chordal graphs\end{center} 
\end{tabular}
\end{center}

\section{Characterization by forbidden induced subgraphs}\label{sec4}

This section is entirely devoted to the proof of Theorem~\ref{thm:obstructions}.
That is, we show that the set $\obswp$ of graphs depicted in Figure~\ref{fig:obstructions} is the set of all forbidden induced subgraphs for well-partitioned chordal graphs, and give a polynomial-time recognition algorithm for this graph class.
For convenience, we say that an induced subgraph of a graph that is isomorphic to a graph in $\obswp$ is an \emph{obstruction} for well-partitioned chordal graphs, or simply an obstruction.

In Subsection~\ref{subsec:obstruction}, we show that the graphs in $\obswp$ are not well-partitioned chordal graphs (Proposition~\ref{prop:characterization:wpc:to:obs}).
In Subsection~\ref{subsec:partial}, we introduce the notion of a boundary-crossing path which is the main tool for devising the polynomial-time recognition algorithm. 
We present the certifying algorithm in Subsection~\ref{subsec:certifying}, which also concludes the proof of the characterization by forbidden induced subgraphs for well-partitioned chordal graphs.

It is not difficult to observe that no graph in $\obswp$ contains another graph in $\obswp$ as an induced subgraph.
Thus, the results in this section also implies that graphs in $\obswp$ are minimal graphs with respect to the induced subgraph relation that are not well-partitioned chordal graphs.

\subsection{Graphs in $\obswp$ are not well-partitioned chordal graphs}\label{subsec:obstruction}
To argue that none of the graphs in $\obswp$ is a well-partitioned chordal graph, we make the following observation about triangles, which follows immediately from the definition of the partition tree.
\begin{observation}\label{obs:triangle}
	Let $G$ be a connected well-partitioned chordal graph, and $D \subseteq V(G)$ be the vertex set of a triangle in $G$. In any partition tree $\calT$ of $G$, there are at most two bags whose intersection with $D$ is non-empty.
\end{observation}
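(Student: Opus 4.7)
The plan is to prove the contrapositive via a short argument using property~\ref{def:wp:tree:nbh} of the partition-tree definition, together with the fact that $\calT$ is a tree and hence acyclic.

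Concretely, I would assume for contradiction that the triangle $D = \{a, b, c\}$ meets at least three distinct bags $A, B, C \in V(\calT)$. Because $\card{D} = 3$, pigeonholing lets me pick (after possibly relabelling) representatives $a \in D \cap A$, $b \in D \cap B$, and $c \in D \cap C$ that are pairwise distinct. Since $D$ induces a triangle in $G$, each of the three pairs $ab$, $bc$, $ca$ is an edge of $G$.

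Now I apply property~\ref{def:wp:tree:nbh} of the partition tree: for any two distinct bags $X, Y$ with $XY \notin E(\calT)$, no edge of $G$ has one endpoint in $X$ and the other in $Y$. Contrapositively, the existence of an edge between two distinct bags forces those bags to be adjacent in $\calT$. Applying this to the three edges $ab$, $bc$, $ca$ yields that $AB$, $BC$, and $CA$ are all edges of $\calT$. But then $A, B, C$ form a cycle of length three in $\calT$, contradicting that $\calT$ is a tree.

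There is essentially no technical obstacle here; the only subtle point to flag is the pigeonhole step that lets us choose three \emph{distinct} representatives from $D$, which relies on $\card{D}=3$ together with the assumption that $D$ meets three pairwise disjoint bags. The observation is thus immediate from properties \ref{def:wp:cliques}--\ref{def:wp:tree:nbh} and the acyclicity of $\calT$.
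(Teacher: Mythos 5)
Your argument is correct and is exactly the reasoning the paper has in mind: the paper states the observation ``follows immediately from the definition of the partition tree,'' and the implicit argument is precisely yours --- three distinct bags meeting the triangle would, via property~\ref{def:wp:tree:nbh}, be pairwise adjacent in $\calT$, giving a $3$-cycle in a tree. The pigeonhole step you flag is sound, since the bags partition $V(G)$ and so the three nonempty intersections with the $3$-element set $D$ must be disjoint singletons.
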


	Given a connected well-partitioned chordal graph $G$ and	
	a triangle with vertex set $D \subseteq V(G)$, we say that a partition tree of $G$
	\emph{respects $D$} if it contains a bag that contains all the vertices of $D$.
	For a non-empty proper subset $D' \subset D$, we say that a partition tree $\calT$ \emph{splits $D$ into $(D', D \setminus D')$} 
	if $\calT$ contains two distinct bags $B_1$ and $B_2$
	such that $B_1 \cap D = D'$ and $B_2 \cap D = D \setminus D'$.
	If a partition tree splits $D$ into $(D', D \setminus D')$ for some $D' \subset D$, 
	then we may simply say that it \emph{splits $D$}.
	By Observation~\ref{obs:triangle}, each partition tree either respects or splits each triangle.

	For $s\in \{1,2,3\}$ and $t\ge 0$, the vertex set of a block of $W_{s,t}$ having more than $3$ vertices is called a \emph{wing} of $W_{s,t}$.

\newcommand\obstruction{O}
\newcommand\ABar{\ensuremath{\obstruction_1}}
\newcommand\FourFan{\ensuremath{\obstruction_2}}
\newcommand\AhnOne{\ensuremath{\obstruction_3}}
\newcommand\AhnTwo{\ensuremath{\obstruction_4}}
\newcommand\Holes{\ensuremath{H_k, k \ge 4}}

\begin{proposition}\label{prop:characterization:wpc:to:obs}
	The graphs in $\obswp$ are not well-partitioned chordal graphs.
\end{proposition}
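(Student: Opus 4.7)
The plan is to proceed by case analysis across the families of graphs in $\obswp$.

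\textbf{Holes $H_k$.} Since every well-partitioned chordal graph is chordal (as noted in Section~\ref{sec3}), no graph containing an induced cycle of length at least $4$ can be well-partitioned chordal. This immediately disposes of the family $\{H_k : k \geq 4\}$.

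\textbf{Finite obstructions $O_1, O_2, O_3, O_4$.} For each such graph $O$, I would assume for contradiction that there is a partition tree $\calT$ of $O$, and analyse the placement of its triangles in $\calT$. By Observation~\ref{obs:triangle}, each triangle of $O$ is either respected (all three vertices in one bag) or split between exactly two adjacent bags. The graphs $O_1$ and $O_2$ have only six vertices and a small number of triangles, so a short enumeration of cases based on which triangles are respected and which are split suffices: for instance, two triangles sharing an edge whose third vertices are non-adjacent force a diamond-like constraint that, combined with the requirement that the boundaries between two adjacent bags induce a complete bipartite graph, eventually contradicts some existing or forbidden edge of $O$. For $O_3$ and $O_4$ the case analysis is longer, but of the same flavour: anchor one or two ``central'' triangles in the partition tree, and show that in each case some vertex of $O$ cannot be placed without creating a forbidden edge or removing a required one.

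\textbf{Families $W_{s, t}$ for $s \in \{1, 2, 3\}$ and $t \geq 0$.} These graphs contain two wings, each a block with at least four vertices, joined by a chain of $t$ triangles; as the paper notes, each wing contains a diamond as an induced subgraph. I plan to first observe that in a partition tree, the two triangles of a diamond cannot both be respected (since the diamond is not a clique, but the two shared vertices would have to lie in a common bag together with the two other vertices in adjacent bags in incompatible ways); thus each wing forces the bag containing the ``shared edge'' of its diamond to be uniquely determined, together with a neighbouring bag that contains the unique split-off vertex. Given this, I would propagate constraints along the chain of $t$ triangles: each chain triangle must be respected or split in a way compatible with its two neighbouring triangles, which rigidly determines the bag structure along the chain up to local choices. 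Finally, I would compare the configurations forced at the two ends and show that the two wings require the ends of the chain to sit in incompatible positions of the partition tree (for example, two bags that must simultaneously be adjacent and non-adjacent to a common bag), regardless of the value of $t$.

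The main obstacle I expect is the uniform treatment of all $t \geq 0$ in the $W_{s, t}$ families: a direct case analysis does not scale with $t$, so the argument must rely on a structural invariant that propagates along the chain of triangles, such as the identity of the bag containing the shared edge of each chain triangle and the side on which the split vertex lies. Once such an invariant is isolated, the incompatibility of the two wings will be readable off either endpoint of the chain, giving the contradiction uniformly.
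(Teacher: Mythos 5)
Your treatment of the holes $H_k$ and of $O_1,\ldots,O_4$ follows the same route as the paper: chordality disposes of the holes, and for the finite obstructions the paper likewise anchors a triangle $D$ and uses the respect/split dichotomy of Observation~\ref{obs:triangle}, packaged into a reusable lemma (Claim~\ref{claim:triangle:o1:o2}) whose three items rule out, respectively, respecting $D$ and the various ways of splitting it. Your sketch for these cases is only an outline, but it is the right outline and the enumeration you defer is exactly what the paper carries out.

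The genuine gap is in the $W_{s,t}$ families, and you have in fact put your finger on it yourself: the ``structural invariant that propagates along the chain'' is never isolated, and the candidate you propose is false as stated. You claim each wing forces ``the bag containing the shared edge of its diamond'' to be \emph{uniquely determined}; but a diamond is a split graph and locally admits several partition trees (the shared edge $\{b,d\}$ may sit in a bag $\{a,b,d\}$ with $\{c\}$ split off, in a bag $\{b,c,d\}$ with $\{a\}$ split off, or in a bag $\{b,d\}$ with both $\{a\}$ and $\{c\}$ as leaf bags), so nothing is rigid at the local level and the same branching recurs at every chain triangle: each triangle $D=\{p,q,s\}$ with degree-$2$ vertex $q$ can be respected or split in any of three ways, and all of these are locally realizable. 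A forward propagation that tracks ``the identity of the bag containing the shared edge'' therefore does not close; comparing the two ends would require controlling an exponential number of branch combinations. The paper sidesteps this with an induction on $t$: whichever of the four options a hypothetical partition tree $\calT_t$ of $W_{s,t}$ takes on a chain triangle, the degree-$2$ vertex $q$ forces the relevant bags to be small (e.g.\ if $D$ is respected its bag must equal $D$, since nothing else is adjacent to $q$), so contracting $D$ to a single vertex converts $\calT_t$ into a partition tree of $W_{s,t-1}$ --- the branching collapses rather than propagates. This reduces everything to the base case $t=0$, which is then handled by a wing lemma (Claim~\ref{claim:w0:wings}: no bag may meet a wing exactly in the cut vertex) together with the observation that no bag can contain non-cut vertices of both wings. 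To repair your proof, replace the propagation-of-an-invariant plan by this contraction/induction argument (or an equivalent ``collapse'' statement); as written, the plan's crucial step rests on a uniqueness claim that the diamond does not satisfy.
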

\begin{proof}
	For $k \ge 4$, $H_k$ is not a chordal graph, so it is not a well-partitioned chordal graph.
	
	We prove an auxiliary claim that will be useful to show that the graphs \ABar{}, \FourFan{}, \AhnOne{}, and \AhnTwo{} in $\obswp$ are not well-partitioned chordal graphs.
	\begin{nestedclaim}\label{claim:triangle:o1:o2}
		Let $H$ be a connected graph and $D=\{x, y, z\} \subseteq V(H)$ be a triangle in $H$.
		\begin{enumerate}[label={(\roman*)}]
			\item\label{claim:triangle:o1o2:type1} If there are adjacent vertices 
				$u, v \in V(H) \setminus D$ such that 
				$D \not\subseteq N_H(u)$ and $D \not\subseteq N_H(v)$, and
				$\emptyset \neq N_H(u) \cap D \neq N_H(v) \cap D \neq \emptyset$, 
				then $H$ has no partition tree respecting $D$.
			\item\label{claim:triangle:o1o2:type2} If there exists a vertex $u \in V(H) \setminus D$ such that 
				$N_H(u) \cap D = \{y, z\}$,
					then $H$ has no partition tree splitting $D$ into $(\{x, y\}, \{z\})$.	
			\item\label{claim:triangle:o1o2:type3} If there exist two non-adjacent vertices $u, v \in V(H) \setminus D$ 
					such that $N_H(u) \cap D = D = N_H(v) \cap D$, 
					then $H$ has no partition tree splitting $D$.
		\end{enumerate}
	\end{nestedclaim}
	\begin{claimproof} 
		In order to prove item~\ref{claim:triangle:o1o2:type1}, 
		suppose there is a partition tree $\calT$ of $H$ respecting $D$, and
		let $B$ be the bag containing $D$.
		First, since $D \not\subseteq N_H(u)$ and $D \not\subseteq N_H(v)$, we have that neither $v$ nor $u$ is contained in $B$ as $B$ is a clique in $H$. Furthermore, since $N_H(u) \cap D \neq \emptyset$ and $N_H(v) \cap D \neq \emptyset$, and since $uv \in E(G)$, it cannot be the case that $u$ and $v$ are in distinct bags, otherwise there would be a triangle in $\calT$. However, since $N_H(u) \cap D \neq N_H(v) \cap D$, $u$ and $v$ cannot be in the same bag either.
		
		Now we proceed to the proof of item~\ref{claim:triangle:o1o2:type2}.
		Suppose there is a partition tree $\calT$ of $H$ that splits $D$ into $(\{x, y\}, \{z\})$, and
		denote the two bags intersecting $D$ by $B_1$ and $B_2$ with $B_1 \cap D = \{x, y\}$ and $B_2 \cap D = \{z\}$.  
		Since $u$ is not adjacent to $x$, $u\notin B_1$.
		Since $x\in N_H(z)\cap B_1$ and $x\notin N_H(u)\cap B_1$,
	 $u$ cannot be contained in $B_2$ either.
		However, since $uz,uy \in E(G)$, if $u$ is in a bag other than $B_1$ and $B_2$, then $\{u,y,z\}$ is a triangle that intersects three distinct bags of $\calT$, a contradiction with Observation~\ref{obs:triangle}.
		
		To conclude, we prove item~\ref{claim:triangle:o1o2:type3}.
		Suppose there is a partition tree $\calT$ of $H$ that splits~$D$, and again
		denote the two bags intersecting $D$ by $B_1$ and $B_2$, 
		with $B_1 \cap D = \{x, y\}$ and $B_2 \cap D = \{z\}$.
		First, since $u$ and $v$ are non-adjacent, they cannot be in the same bag. 
		Furthermore, there cannot be a bag $B_3 \in V(\calT) \setminus \{B_1, B_2\}$ such that $\{u, v\} \cap B_3 \neq \emptyset$: 
		both $u$ and $v$ have neighbors in $B_1$ and in $B_2$, so this would imply the existence of a triangle that intersects three distinct bags of $\calT$ ($B_1$, $B_2$, and $B_3$). 
		The last case that remains is when $u \in B_1$ and $v \in B_2$. However, in this case, $B_1$ contains a vertex that is adjacent to $v$, namely $x$, and a vertex that is not adjacent to $v$, namely $u$, a contradiction. 
	\end{claimproof}

	Now, let us consider the obstructions $O_1$, $O_2$, $O_3$ and $O_4$ and assume that their vertices are labelled as in Figure~\ref{fig:labelling1}. 

	\begin{figure}
		\centering
		\scalebox{0.8}{
			\newcommand\drawedge[2]{\draw[edge] #1 to #2;}

			\def\nameindent{-.75}
			\def\nameindentsmall{-0.5}

			\begin{tikzpicture}
				\begin{scope}[]
					\node [] at (0.75, \nameindent) {\ABar};
					\node [vtx, label=left:{$a$}] (a) at (0, 0) {};
					\node [vtx, label=right:{$b$}] (b) at (1.5, 0) {};
					\node [vtx, label=left:{$c$}] (c) at (0, 1) {};
					\node [vtx, label=right:{$d$}] (d) at (1.5, 1) {};
					\node [vtx, label=left:{$e$}] (e) at (0, 2) {};
					\node [vtx, label=right:{$f$}] (f) at (1.5, 2) {};
	
					\drawedge{(a)}{(b)}
					\drawedge{(a)}{(c)}
					\drawedge{(a)}{(d)}
					\drawedge{(b)}{(d)}
					\drawedge{(c)}{(d)}
					\drawedge{(c)}{(e)}
					\drawedge{(c)}{(f)}
					\drawedge{(d)}{(f)}
					\drawedge{(e)}{(f)}
				\end{scope}
	
				\begin{scope}[xshift=3.5cm]
					\node[] at (0.75, \nameindent) {\FourFan};
					\node [vtx, label=left:$a$] (a) at (0, 0) {};
					\node [vtx, label=right:$b$] (b) at (1.5, 0) {};
					\node [vtx, label=left:$c$] (c) at (0, 1) {};
					\node [vtx, label=right:$d$] (d) at (1.5, 1) {};
					\node [vtx, label=left:$e$] (e) at (0, 2) {};
					\node [vtx, label=right:$f$] (f) at (1.5, 2) {};
		
					\drawedge{(a)}{(b)}
					\drawedge{(a)}{(c)}
					\drawedge{(a)}{(d)}
					\drawedge{(b)}{(d)}
					\drawedge{(c)}{(d)}
					\drawedge{(c)}{(e)}
					\drawedge{(d)}{(e)}
					\drawedge{(d)}{(f)}
					\drawedge{(e)}{(f)}
				\end{scope}
	
				\begin{scope}[xshift=7cm]
					\node [] (name) at (1.5, \nameindent) {\AhnOne};
					\node [vtx, label=below:$a$] (a) at (0, 0) {};
					\node [vtx, label=above:{$b$}] (b) at (1.5, 0) {};
					\node [vtx, label=below:$c$] (c) at (3, 0) {};
					\node [vtx, label=left:$d$] (d) at (0.75, 1) {};
					\node [vtx, label=right:$e$] (e) at (2.25, 1) {};
					\node [vtx, label=above:$f$] (f) at (1.5, 2) {};
					\node [vtx, label=above:$g$] (g) at (3, 2) {};
		
					\drawedge{(a)}{(b)}
					\drawedge{(a)}{(d)}
					\drawedge{(a)}{(e)}
					\drawedge{(b)}{(c)}
					\drawedge{(b)}{(d)}
					\drawedge{(b)}{(e)}
					\drawedge{(c)}{(d)}
					\drawedge{(c)}{(e)}
					\drawedge{(d)}{(e)}
					\drawedge{(d)}{(f)}
					\drawedge{(e)}{(f)}
					\drawedge{(e)}{(g)}
					\drawedge{(f)}{(g)}
				\end{scope}
	
				\begin{scope}[xshift=12cm]
					\node [] (name) at (1.75, \nameindent) {\AhnTwo};
					\node [vtx, label=below:$b$] (a) at (1, 0) {};
					\node [vtx, label=below:$c$] (b) at (2.5, 0) {};
					\node [vtx, label=below:$a$] (c) at (0, 0.5) {};
					\node [vtx, label=below:$d$] (d) at (3.5, 0.5) {};
					\node [vtx, label=above:$f$] (e) at (1, 1) {};
					\node [vtx, label=above:$g$] (f) at (2.5, 1) {};
					\node [vtx, label=above:$e$] (g) at (0, 1.5) {};
					\node [vtx, label=above:$i$] (h) at (1.75, 2) {};
					\node [vtx, label=above:$h$] (i) at (3.5, 1.5) {};
		
					\drawedge{(a)}{(b)}
					\drawedge{(a)}{(c)}
					\drawedge{(a)}{(e)}
					\drawedge{(a)}{(f)}
					\drawedge{(a)}{(h)}
					\drawedge{(b)}{(d)}
					\drawedge{(b)}{(e)}
					\drawedge{(b)}{(f)}
					\drawedge{(b)}{(h)}
					\drawedge{(c)}{(e)}
					\drawedge{(c)}{(g)}
					\drawedge{(d)}{(f)}
					\drawedge{(d)}{(i)}
					\drawedge{(e)}{(g)}
					\drawedge{(e)}{(f)}
					\drawedge{(e)}{(h)}
					\drawedge{(f)}{(h)}
					\drawedge{(f)}{(i)}
				\end{scope}
			\end{tikzpicture}
		}
		\caption{Labellings of graphs $O_1, O_2, O_3$, and $O_4$.}
		\label{fig:labelling1}
	\end{figure}

	By Observation~\ref{obs:triangle}, each partition tree either respects or splits a triangle.	
	First, consider the graph \ABar{} and consider the triangle $D = \{a, c, d\}$. Because of the vertices $e$ and $f$, 
	we can observe that, by Claim~\ref{claim:triangle:o1:o2}\ref{claim:triangle:o1o2:type1}, no partition tree of \ABar{} respects $D$.
	Furthermore, because of the vertices $b$, $f$, and $b$, we obtain by 	Claim~\ref{claim:triangle:o1:o2}\ref{claim:triangle:o1o2:type2} 
	that no partition tree splits $D$ into $(\{a, c\}, \{d\})$, $(\{a, d\}, \{c\})$, and $(\{c, d\}, \{a\})$, 
	respectively. Thus, no partition tree of \ABar{} splits $D$.
	Hence, \ABar{} does not admit a partition tree and therefore it is not a well-partitioned chordal graph.
	
	For \FourFan{}, consider again the triangle $\{a, c, d\}$. 
	The arguments are similar to the previous ones, except that the vertex $e$ should be used 
	to show that no partition tree splits $\{a, c, d\}$ into $(\{a, d\}, \{c\})$.
	
	For \AhnOne{}, consider the triangle $D = \{b, d, e\}$. Because of the vertices $f$ and $g$, we observe that, by Claim~\ref{claim:triangle:o1:o2}\ref{claim:triangle:o1o2:type1}, no partition tree of \AhnOne{} respects $D$. On the other hand, because of $a$ and $c$, we observe that by Claim~\ref{claim:triangle:o1:o2}\ref{claim:triangle:o1o2:type3}, no partition tree of \AhnOne{} splits $D$.
	Hence, \AhnOne{} is not a well-partitioned chordal graph. 
	
	For \AhnTwo{}, consider the triangle $D = \{b, c, g\}$. Because of the vertices $d$ and $h$, we can conclude by Claim~\ref{claim:triangle:o1:o2}\ref{claim:triangle:o1o2:type1} that no partition tree of \AhnTwo{} respects $D$. 
	Since $N_{\AhnTwo{}}(d) \cap D = \{c, g\} = D \setminus \{b\}$, 
	by Claim~\ref{claim:triangle:o1:o2}\ref{claim:triangle:o1o2:type2},
	no partition tree of \AhnTwo{} splits $D$ into $(\{b, c\}, \{g\})$ or $(\{b, g\}, \{c\})$. 	
	Thus, we may assume that each partition tree splits $D$ into $(\{c, g\}, \{b\})$. 
	Let $B_1$ and $B_2$ be the two bags such that $B_1\cap D=\{c, g\}$ and $B_2\cap D=\{b\}$.

	Since $hc\notin E(G)$, we have that $h\notin B_1$. Also, since $N_{\AhnTwo{}}(h)\cap \{g,c\}\neq N_{\AhnTwo{}}(d)\cap\{g,c\}$, 
	$h$ and $d$ cannot be in the same bag. 
	Thus, we conclude that $d\in B_1$, otherwise $\{d,h,g\}$ would be a triangle that intersects 
	three distinct bags of $\calT$.
	By considering the triangle $\{b,c,f\}$, we can conclude by symmetry that 
	$\{a,b,f\}$ are contained in the same bag, which is $B_2$.
	As $i$ is adjacent to neither $a$ nor $d$, 
	the bag containing $i$ forms a triangle with $B_1$ and $B_2$, a contradiction.
	We can conclude that \AhnTwo{} is not a well-partitioned chordal graph.
	
	Next, we show that for all $s \in \{1,2,3\}$ and $t \ge 0$, $W_{s, t}$ is not a well-partitioned chordal graph. This will be done by induction on $t$, and for the base case $t = 0$, we consider these graphs with their vertices labelled as in Figure~\ref{fig:labelling2}.
\begin{figure}
		\centering
		\scalebox{0.8}{
			\newcommand\drawedge[2]{\draw[edge] #1 to #2;}
			\newcommand\drawdottededge[2]{\draw[nonedge] #1 to #2;}

			\def\nameindent{-0.75cm}

			\begin{tikzpicture}
				\begin{scope}[xshift=.75cm]
					\node [vtx, label=below:{$a$}] (a) at (0.75, 0) {};
					\node [vtx, above left of = a, label=below:{$b$}] (b) {};
					\node [vtx, above right of = b, label=above:{$c$}] (c) {};
					\node [vtx, above right of = a, label=below:{$d$}] (d) {};
					\node [vtx, above right of = d, label=above:{$y$}] (y) {};
					\node [vtx, below right of = y, label=below:{$z$}] (z) {};
					\node [vtx, below left of = z, label=below:{$x$}] (w) {};
					\node [] at ($(a) !0.5! (z)$) [below=1cm] {$W_{1, 0}$};
		
					\drawedge{(a)}{(b)}
					\drawedge{(a)}{(d)}
					\drawedge{(b)}{(c)}
					\drawedge{(b)}{(d)}
					\drawedge{(c)}{(d)}
					\drawedge{(w)}{(d)}
					\drawedge{(w)}{(z)}
					\drawedge{(d)}{(y)}
					\drawedge{(d)}{(z)}
					\drawedge{(y)}{(z)}
		
				\end{scope}
	
				\begin{scope}[xshift=5.25cm]
					\node [vtx, label=below:{$a$}] (a) at (0.75, 0) {};
					\node [vtx, above left of = a, label=below:{$b$}] (b) {};
					\node [vtx, above right of = b, label=above:{$c$}] (c) {};
					\node [vtx, above right of = a, label=below:{$d$}] (d) {};
					\node [vtx, above left of = b, label=below:{$e$}] (e) {};
					\node [vtx, above right of = e, label=above:{$f$}] (f) {};
					\node [vtx, above right of = d, label=above:{$y$}] (y) {};
					\node [vtx, below right of = y, label=below:{$z$}] (z) {};
					\node [vtx, below left of = z, label=below:{$x$}] (w) {};
					\node [] at ($(a) !0.5! (z)$) [below=1cm] {$W_{2, 0}$};
		
					\drawedge{(a)}{(b)}
					\drawedge{(a)}{(c)}
					\drawedge{(a)}{(d)}
					\drawedge{(b)}{(c)}
					\drawedge{(b)}{(d)}
					\drawedge{(c)}{(d)}
		
					\drawedge{(b)}{(e)}
					\drawedge{(c)}{(e)}
					\drawedge{(c)}{(f)}
					\drawedge{(e)}{(f)}		
		
					\drawedge{(w)}{(d)}
					\drawedge{(w)}{(z)}
					\drawedge{(d)}{(y)}
					\drawedge{(d)}{(z)}
					\drawedge{(y)}{(z)}
		
				\end{scope}
	
				\begin{scope}[xshift=9.75cm]
					\node [vtx, label=below:{$a$}] (a) at (0.75, 0) {};
					\node [vtx, above left of = a, label=below:{$b$}] (b) {};
					\node [vtx, above right of = b, label=above:{$c$}] (c) {};
					\node [vtx, above right of = a, label=below:{$d$}] (d) {};
					\node [vtx, above left of = b, label=below:{$e$}] (e) {};
					\node [vtx, above right of = e, label=above:{$f$}] (f) {};
					\node [vtx, above right of = d, label=above:{$y$}] (y) {};
					\node [vtx, below right of = y, label=below:{$z$}] (z) {};
					\node [vtx, below left of = z, label=below:{$x$}] (w) {};
					\node [vtx, above right of = z, label=below:{$v$}] (u) {};
					\node [vtx, above right of = y, label=above:{$w$}] (v) {};
					\node [] at ($(a) !0.5! (z)$) [below=1cm] {$W_{3, 0}$};
		
					\drawedge{(a)}{(b)}
					\drawedge{(a)}{(c)}
					\drawedge{(a)}{(d)}
					\drawedge{(b)}{(c)}
					\drawedge{(b)}{(d)}
					\drawedge{(c)}{(d)}
		
					\drawedge{(b)}{(e)}
					\drawedge{(c)}{(e)}
					\drawedge{(c)}{(f)}
					\drawedge{(e)}{(f)}	
		
					\drawedge{(w)}{(y)}
					\drawedge{(y)}{(u)}
					\drawedge{(y)}{(v)}
					\drawedge{(z)}{(u)}
					\drawedge{(u)}{(v)}
		
					\drawedge{(w)}{(d)}
					\drawedge{(w)}{(z)}
					\drawedge{(d)}{(y)}
					\drawedge{(d)}{(z)}
					\drawedge{(y)}{(z)}
		
				\end{scope}
			\end{tikzpicture}
		}
		\caption{Labellings of graphs $W_{1,0}, W_{2,0}$, and $W_{3,0}$.}
		\label{fig:labelling2}
	\end{figure}

	For convenience, we labelled the cut vertex of each graph $d$. We recall that for $s \in \{1,2,3\}$, the vertex set of a block of $W_{s, 0}$ is a wing of $W_{s, 0}$. For instance, $\{a, b, c, d\}$ and $\{d, x, y, z\}$ are the wings of $W_{1, 0}$.
	\begin{nestedclaim}\label{claim:w0:wings}
		For $s \in \{1,2,3\}$, $W_{s, 0}$ has no partition tree having a bag whose intersection with a wing of $W_{s, 0}$ consists of only the cut vertex.
	\end{nestedclaim}
	\begin{claimproof}
		Suppose there is a partition tree $\calT_1$ of $W_{1, 0}$ that contains a bag $B$ such that $B \cap \{a, b, c, d\} = \{d\}$. This implies that there exists $B_1$ such that $\{a,b\}\subseteq B_1$, otherwise $\{a,b,d\}$ would be a triangle that intersects three distinct bags of $\calT_1$.
Since $c$ is not adjacent to $a$, $c \notin B_1$ and, by assumption, $c \notin B$. Thus $\{b,c,d\}$ is a triangle intersecting three bags of $\calT_1$, a contradiction. 
		
		Next, suppose there is a partition tree $\calT_2$ of $W_{2, 0}$ that contains a bag $B$ whose intersection with a wing of $W_{2, 0}$ consists of the cut vertex $d$ alone. If the affected wing is $\{d, x, y, z\}$, then the argument follows from the same argument given before.
		If $B \cap \{a, b, c, d, e, f\} = \{d\}$, we observe the following. First, there must exist a bag $B_1$ containing $\{a, b, c\}$, otherwise there is a triangle violating Observation~\ref{obs:triangle}. Since neither $e$ nor $f$ is adjacent to $a$, $\{e, f\} \cap B_1 = \emptyset$. Since $N_{W_{2, 0}}(e) \cap B_1 \neq N_{W_{2, 0}}(f) \cap B_1$, by Claim~\ref{claim:triangle:o1o2:type1}, there is no partition tree respecting $\{a,b,c\}$, a contradiction.
	
		The claim regarding $W_{3, 0}$ follows as well, noting that the wings of $W_{3, 0}$ are isomorphic to the one considered in the latter case.
	\end{claimproof}
	
	\begin{nestedclaim}
		For each $s \in \{1,2,3\}$ and $t \ge 0$, $W_{s, t}$ is not a well-partitioned chordal graph.
	\end{nestedclaim}
	\begin{claimproof}
		We prove the claim by induction on $t$. For $t = 0$, we observe that no bag of a partition tree can contain vertices from both wings of a $W_{s,0}$, unless it is the cut vertex. Hence, this case follows from Claim~\ref{claim:w0:wings}.

Now, suppose that for every $k\leq t-1$, $W_{s,k}$ is not a well-partitioned chordal graph. Consider $W_{s,t}$, with $t\geq 1$.
We may assume that besides the wings, $W_{s, t}$ has at least one triangle, call that triangle $D = \{p, q, s\}$. Suppose there is a partition tree $\calT_t$ for $W_{s, t}$. We will show how to transform this partition tree into a partition tree of $W_{s, t-1}$, contradicting the induction hypothesis. We know that $\calT_t$ either respects or splits $D$. 
		
		If $\calT_t$ respects $D$, let $B$ denote the bag that contains $D$. Suppose $q$ is the vertex in $D$ that has degree $2$ in $W_{s, t}$. Then, $B = D$, as no other vertex in $W_{s, t}$ is adjacent to $q$.
		Now, if we contract $D$ to a single vertex, say $p^*$, then we obtain $W_{s, t-1}$. 
		Moreover, if we replace $B$ by $B^* \defeq \{p^*\}$, and make $B^*$ adjacent to all bags in $N_{\calT_t}(B)$,
		then this gives a partition tree for $W_{s, t-1}$, a contradiction.

		If $\calT_t$ splits $D$ into $(\{p,s\}, \{q\})$, note that since no vertex other than $q$ is adjacent to both $p$ and $s$, we have that $B_1=\{p,s\}$ and $B_2=\{q\}$. As in the previous case, if we contract $D$ to a single vertex $p^*$, we can delete $B_2$ and replace $B_1$ by $B^* \defeq \{p^*\}$ to obtain a partition tree for $W_{s, t-1}$, a contradiction.
		
		Suppose that $\calT_t$ splits $D$ into $(\{p, q\}, \{s\})$, and let $B_1$ and $B_2$ be the bags of $\calT_t$ such that $B_1 \cap D = \{p, q\}$ and $B_2 \cap D = \{s\}$. Again, since no vertex other than $p$ and $s$ is adjacent to $q$, we have that $B_1 = \{p, q\}$. Now, if we simply remove $B_1$ from $\calT_t$ and make $B_2$ adjacent to all bags in $N_{\calT_t}(B_1) \setminus \{B_2\}$, then we obtain a partition tree for $W_{s, t-1}$, a contradiction. The case in which $\calT_t$ splits $D$ into $(\{q,s\}, \{p\})$ is symmetric to the latter case.
	\end{claimproof}
	This concludes the proof of Proposition~\ref{prop:characterization:wpc:to:obs}.
\end{proof}

\subsection{Boundary-crossing paths}\label{subsec:partial}

In the remaining part of this section, we present the certifying algorithm for well-partitioned chordal graphs. 
Here, we define the main concept of a boundary-crossing path and prove some useful lemmas. 

Let $G$ be a connected well-partitioned chordal graph with a partition tree $\mathcal{T}$. 
For a bag $X$ of $\mathcal{T}$ and $B\subseteq X$, a vertex $z \in V(G) \setminus X$ is said to \emph{cross $B$ in $X$}, if it has a neighbor both in $B$ and in $X \setminus B$. In this case, we also say that $B$ has a crossing vertex. In the following definitions, a path $X_1X_2 \ldots X_{\ell}$ in $\calT$ is considered to be ordered from $X_1$ to $X_{\ell}$.
Let $\ell\ge 3$ be an integer.
A path $X_1 X_2 \ldots X_\ell$ in $\calT$ is called a \emph{boundary-crossing path} if for each $1\leq i\leq \ell-2$, there is a vertex in $X_i$ that crosses $\bdd{X_{i+1}}{X_{i+2}}$.
If for each $1\leq i\leq \ell-2$, there is no bag $Y \in V(\calT) \setminus \{X_i\}$ containing a vertex that crosses $\bdd{X_{i+1}}{X_{i+2}}$, then we say the path is \emph{exclusive}.
If for each $1\leq i\leq \ell-2$, $\bdd{X_i}{X_{i+1}}$ is complete to $X_{i+1}$, then we say the path is \emph{complete}. If a boundary-crossing path is both complete and exclusive, then we call it \emph{good}. 
For convenience, we say that any path in $\calT$ with at most two bags is a boundary-crossing path.

The outline of the algorithm is as follows. First we may assume that a given graph $G$ is chordal, as we can detect a hole in polynomial time using Theorem~\ref{thm:nikolo} if it exists.
We may also assume that $G$ is connected.
So, it has a simplicial vertex $v$, and by an inductive argument, we can assume that $G-v$ is a well-partitioned chordal graph.
As $v$ is simplical, $G-v$ is also connected, and thus it admits a partition tree $\calT$.
If $v$ has neighbors in one bag of $\calT$, 
then we can simply put $v$ as a new bag adjacent to that bag.
Thus, we may assume that $v$ has neighbors in two distinct bags, say $C_1$ and $C_2$.
Then our algorithm is divided into three parts:
\begin{enumerate}
	\item\label{bcp:outline:1} We find a maximal good boundary-crossing path ending in $C_2C_1$ (or $C_1C_2$). To do this, when we currently have a good boundary-crossing path $C_iC_{i-1} \ldots C_2C_1$, find a bag $C_{i+1}$ containing a vertex crossing $\bdd{C_i}{C_{i-1}}$. If there is no such bag, then this path is maximal. Otherwise, we argue that in polynomial time either we can find an obstruction, or verify that $C_{i+1}C_i \ldots C_2C_1$ is good. 
	\item\label{bcp:outline:2} Assume that $C_kC_{k-1} \ldots C_2C_1$ is the obtained maximal good boundary-crossing path.
	Then we can in polynomial time modify $\calT$ so that no vertex crosses $\bdd{C_2}{C_1}$.
	\item\label{bcp:outline:3} We show that if no vertex crosses $\bdd{C_2}{C_1}$ and no vertex crosses $\bdd{C_1}{C_2}$, then we can extend $\calT$ to a partition tree of $G$.
\end{enumerate}

	For the lemmas of this section, we fix that $G$ is a connected chordal graph, $v$ is a simplicial vertex, and $G-v$ is a connected well-partitioned chordal graph with partition tree $\calT$, and furthermore assume that $v$ has neighbors in two distinct bags $C_1$ and $C_2$.

	Regarding Step~\ref{bcp:outline:2}, Lemma~\ref{lem:shorten:path} shows that when a maximal good boundary-crossing path $C_kC_{k-1} \ldots C_2C_1$ is given, we can modify $\calT$
	to a partition tree $\calT'$ such that no vertex crosses $\bdd{C_2'}{C_1'}$, 
	where $C_1'$ and $C_2'$ are the bags in $\calT'$ that correspond to $C_1$ and $C_2$ in $\calT$, respectively --
	in particular, they are the bags containing the neighbors of $v$.

	\begin{lemma}\label{lem:shorten:path}
		Let $C_k C_{k-1} \ldots C_1$ be a good boundary-crossing path for some integer $k \ge 3$ 
		such that no vertex crosses $\bdd{C_k}{C_{k-1}}$. 
		One can in polynomial time output a partition tree $\calT'$ of $G-v$ that contains
		a good boundary-crossing path $C_{k-1}' C_{k-2} \ldots C_1$
		such that no vertex in $G-v$ crosses $\bdd{C_{k-1}'}{C_{k-2}}$.
\end{lemma}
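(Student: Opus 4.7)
The plan is to construct $\calT'$ from $\calT$ by absorbing the boundary $A \defeq \bdd{C_k}{C_{k-1}}$ into $C_{k-1}$, forming a merged bag $C_{k-1}' \defeq C_{k-1} \cup A$, and keeping the remainder $B \defeq C_k \setminus A$ as a leaf bag attached to $C_{k-1}'$ (or discarding it if $B = \emptyset$). Every $\calT$-neighbor $D$ of $C_k$ other than $C_{k-1}$ must be rerouted; I would argue that $\bdd{C_k}{D}$ must be entirely contained in $A$ or entirely in $B$, since otherwise any vertex of $\bdd{D}{C_k}$, being complete to $\bdd{C_k}{D}$, would be a vertex outside $C_k$ crossing $\bdd{C_k}{C_{k-1}}$, contradicting the hypothesis. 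In the first case $D$ is reattached to $C_{k-1}'$; in the second, to $B$. When $B \neq \emptyset$, the new edge from $C_{k-1}'$ to $B$ carries the complete bipartite structure $A \times B$, which is valid since $C_k$ is a clique.

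Next, I would verify that $\calT'$ is a partition tree of $G - v$. The bag $C_{k-1}'$ is a clique, since completeness of the given good path at position $i=1$ ensures $A$ is complete to $C_{k-1}$ and both $A, C_{k-1}$ are cliques. All other bags remain subsets of their old counterparts. For the edge property, edges of $\calT'$ incident to $C_{k-1}'$ arise either from original edges of $\calT$ incident to $C_{k-1}$ (whose complete bipartite structure is preserved because $A$ has no neighbors outside the $\calT$-neighborhood of $C_k$) or from reroutings of edges of $\calT$ incident to $C_k$ (which retain their original bipartite structure). The condition that non-adjacent bags in $\calT'$ have no edges in $G - v$ follows because the rerouting has accounted for every $G$-edge incident to a vertex of $C_k$.

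Finally, I would verify the two properties of the path $P' \defeq C_{k-1}' C_{k-2} \cdots C_1$ in $\calT'$. The key observation is that $\bdd{C_{k-1}'}{C_{k-2}} = \bdd{C_{k-1}}{C_{k-2}}$, since $A \subseteq C_k$ and $C_k$ is not adjacent to $C_{k-2}$ in $\calT$. When $k \ge 4$, completeness of $P'$ at position $i = 1$ and the existence of a crossing vertex for $\bdd{C_{k-2}}{C_{k-3}}$ in $C_{k-1}'$ both transfer from the corresponding statements about the original path at position $i = 2$. For exclusivity at position $i = 1$ of $P'$, and for the conclusion that no vertex crosses $\bdd{C_{k-1}'}{C_{k-2}}$, I combine the original exclusivity at position $i = 2$ with the fact that $B$ has no neighbor in $C_{k-1}$ (by definition of $B$), hence none in $\bdd{C_{k-1}'}{C_{k-2}} \subseteq C_{k-1}$; the case $k = 3$ is the same minus the vacuous goodness conditions. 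At positions $i \ge 2$ of $P'$, all relevant bags are unchanged, so all conditions transfer verbatim from the original. The construction is plainly polynomial-time. The main obstacle I expect is the bookkeeping in this last step: checking carefully that no rerouted bag or residual part of $C_k$ becomes a new crossing witness anywhere along $P'$, and that the local surgery around $C_k$ preserves the global partition-tree structure.
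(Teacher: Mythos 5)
Your construction is the paper's construction verbatim: absorb $A \defeq \bdd{C_k}{C_{k-1}}$ into $C_{k-1}$ to form $C_{k-1}'$, keep $B \defeq C_k \setminus A$ as a residual bag, and reroute each other neighbor $D$ of $C_k$ according to whether $\bdd{C_k}{D} \subseteq A$ or $\bdd{C_k}{D} \subseteq B$ (your justification of this dichotomy from the hypothesis that no vertex crosses $\bdd{C_k}{C_{k-1}}$ is correct, and is the reason behind the paper's partition into $\calS_1$ and $\calS_2$). The verification that $\calT'$ is a partition tree, the identity $\bdd{C_{k-1}'}{C_{k-2}} = \bdd{C_{k-1}}{C_{k-2}}$, and the transfer of the goodness conditions along $C_{k-2}\ldots C_1$ are all fine.

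However, your proof of the lemma's central conclusion --- that no vertex crosses $\bdd{C_{k-1}'}{C_{k-2}}$ --- has a genuine gap. A crossing witness is a vertex $q \notin C_{k-1}'$ with a neighbor $u \in \bdd{C_{k-1}}{C_{k-2}}$ and a neighbor $w \in C_{k-1}' \setminus \bdd{C_{k-1}}{C_{k-2}}$, and the two facts you combine cover only two of the three cases. If $w \in C_{k-1}$, then $q$ already crossed $\bdd{C_{k-1}}{C_{k-2}}$ in $\calT$, so exclusivity (at position $i=1$ of the original path, incidentally, not $i=2$ as you cite --- that index governs $\bdd{C_{k-2}}{C_{k-3}}$) forces $q \in \bdd{C_k}{C_{k-1}} = A \subseteq C_{k-1}'$, a contradiction; and if $q \in B$, your observation that $B$ has no neighbor in $C_{k-1}$ applies. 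But the remaining case --- $w \in A$ and $q \notin C_k \cup C_{k-1}$, i.e.\ $q$ lying in a rerouted bag of $\calS_2$ or elsewhere --- is addressed by neither fact: such a $q$ was \emph{not} a crossing witness in $\calT$ (its second foot is in $C_k$, not $C_{k-1}$) and it is not in $B$. This is precisely the ``bookkeeping'' you flag at the end but never carry out. The paper closes this case using the acyclicity of $\calT$: since $w \in A \subseteq C_k$ and $u \in C_{k-1}$, the bag containing $q$ would have to be adjacent in $\calT$ to both $C_k$ and $C_{k-1}$, creating a triangle in the tree, which is impossible. With that one short additional argument your proof is complete and coincides with the paper's.
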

\begin{proof}
	Since no vertex crosses $\bdd{C_k}{C_{k-1}}$, we can partition the neighbors of $C_k$ in $\calT$ into $\calS_1$ and $\calS_2$ such that for all $S_1 \in \calS_1$, we have that $\bdd{C_k}{S_1} \subseteq C_k \setminus \bdd{C_k}{C_{k-1}}$, and for all $S_2 \in \calS_2$, $\bdd{C_k}{S_2} \subseteq \bdd{C_k}{C_{k-1}}$. 
		Let $C_{k}' \defeq C_k \setminus \bdd{C_k}{C_{k-1}}$ and $C_{k-1}' \defeq C_{k-1} \cup \bdd{C_k}{C_{k-1}}$.
		We obtain $\calT'$ from $\calT$ as follows.
		\begin{itemize}
			\item Remove $C_k$ and $C_{k-1}$, and add $C_k'$ and $C_{k-1}'$.
			\item Make all bags that have been adjacent to $C_{k-1}$ in $\calT$ adjacent to $C_{k-1}'$.
			\item Make all bags in $\calS_1$ adjacent to $C_k'$, and all bags in $\calS_2$ adjacent to $C_{k-1}'$.
		\end{itemize}
		
		Since $\bdd{C_k}{C_{k-1}}$ is complete to $C_{k-1}$, $C_{k-1}'$ is indeed a clique in $G-v$, and thus we conclude	
		that $\calT'$ is a partition tree of $G-v$. 
		Since $C_{k-1}'$ contains $C_{k-1}$ and there is no edge between $\bdd{C_k}{C_{k-1}}$ and $C_{k-2}$,
		we know that $C_{k-1}' C_{k-2} \ldots C_1$ is 
		a good boundary-crossing path.
		Clearly, $\calT'$ can be obtained in polynomial time.
		
		We claim that no vertex crosses $\bdd{C_{k-1}'}{C_{k-2}}$.
		Suppose for a contradiction that there exists a vertex $q \in V(G-v) \setminus C_{k-1}'$ that crosses $\bdd{C_{k-1}'}{C_{k-2}}$. 
		We consider two cases. First, we assume $q$ also crosses $\bdd{C_{k-1}}{C_{k-2}}$. 
		Since $C_{k} \ldots C_1$ is exclusive, any vertex crossing $\bdd{C_{k-1}}{C_{k-2}}$ is in $\bdd{C_k}{C_{k-1}}$.
		This means that $q \in C_{k-1}'$, a contradiction.
		Now assume that $q$ is a vertex in $V(G-v) \setminus (C_k \cup C_{k-1})$ that is adjacent to a vertex in 
		$\bdd{C_{k-1}'}{C_{k-2}} = \bdd{C_{k-1}}{C_{k-2}}$ and a vertex in $C_{k-1}' \setminus C_{k-1} = \bdd{C_k}{C_{k-1}}$. 
		But this would mean that there is a triangle in $\calT$, a contradiction. 
		
		We conclude that no vertex in $G-v$ crosses $\bdd{C_{k-1}'}{C_{k-2}}$.
\end{proof}
	
	With respect to Step~\ref{bcp:outline:3}, we prove the following lemma.
\begin{lemma}\label{lem:base:nocrossing}
		If every vertex of $G-v$ crosses neither $\bdd{C_1}{C_2}$ nor $\bdd{C_2}{C_1}$, then one can output a partition tree for $G$ in polynomial time.
\end{lemma}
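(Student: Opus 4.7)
The plan is to construct a partition tree $\calT'$ of $G$ by locally modifying $\calT$ in the neighborhood of $C_1$ and $C_2$: split each $C_i$ along its mutual boundary with the other, merge the two boundary pieces into a single new bag $D$, and attach a fresh leaf bag $\{v\}$ to $D$.

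Let $D_i \defeq \bdd{C_i}{C_{3-i}}$ and $C_i^* \defeq C_i \setminus D_i$ for $i \in [2]$, and let $A_i \defeq N_G(v) \cap C_i$. Since $v$ is simplicial in $G$, the set $N_G(v) = A_1 \cup A_2$ is a clique in $G-v$; this forces $A_i \subseteq D_i$ because each vertex of $A_1$ must have a neighbor in $A_2 \subseteq C_2$, and edges between $C_1$ and $C_2$ emanate only from $D_1$. A key consequence of the no-crossing hypothesis is the following dichotomy: for every bag $X \neq C_2$ adjacent to $C_1$ in $\calT$, either $\bdd{C_1}{X} \subseteq D_1$ or $\bdd{C_1}{X} \subseteq C_1^*$. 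Indeed, any vertex $u \in \bdd{X}{C_1}$ is adjacent to all of $\bdd{C_1}{X}$ by item~\ref{def:wp:tree:parent}, so if $\bdd{C_1}{X}$ met both $D_1$ and $C_1^*$, then $u$ would cross $\bdd{C_1}{C_2}$ in $C_1$. A symmetric statement holds for the $\calT$-neighbors of $C_2$.

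The new tree $\calT'$ is built as follows. Remove the bags $C_1, C_2$ together with the edge $C_1 C_2$ of $\calT$, set $D \defeq D_1 \cup D_2$, and introduce the path of bags $C_1^* - D - C_2^*$, omitting $C_i^*$ whenever it is empty. Reattach each $\calT$-neighbor $X \neq C_2$ of $C_1$ to $D$ if $\bdd{C_1}{X} \subseteq D_1$, and to $C_1^*$ otherwise; do the same symmetrically for the $\calT$-neighbors of $C_2$. Finally, add a new leaf bag $\{v\}$ adjacent to $D$.

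The remaining work is to check that $\calT'$ is a partition tree of $G$; this is essentially book-keeping and I expect it to be the mildest of the obstacles. The crucial observation is that the new bag $D = D_1 \cup D_2$ is a clique, which holds because $D_1$ is complete to $D_2$ by item~\ref{def:wp:tree:parent} applied to the edge $C_1 C_2 \in E(\calT)$. Every other new bag is a subset of an original bag and is therefore also a clique. The complete-bipartite boundary condition for each edge of $\calT'$ is inherited from $\calT$ via the dichotomy above, and for the edge $\{v\}D$ it is witnessed by $\bdd{D}{\{v\}} = A_1 \cup A_2$. The absence-of-edges condition across non-adjacent pairs of bags in $\calT'$ follows from the same property for $\calT$. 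The whole construction clearly runs in polynomial time.
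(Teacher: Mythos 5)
Your proposal is correct and takes essentially the same route as the paper: the paper's proof likewise forms $C_1' = C_1 \setminus \bdd{C_1}{C_2}$, $C_2' = C_2 \setminus \bdd{C_2}{C_1}$, and the merged bag $C_{12}' = \bdd{C_1}{C_2} \cup \bdd{C_2}{C_1}$ (your $D$), reattaches each former neighbor of $C_1$ or $C_2$ according to the same boundary dichotomy, and attaches the new bag $\{v\}$ to $C_{12}'$. If anything, your write-up is slightly more explicit than the paper's, spelling out why the no-crossing hypothesis yields the dichotomy, why $N_G(v) \subseteq \bdd{C_1}{C_2} \cup \bdd{C_2}{C_1}$, and why $D$ is a clique, all of which the paper leaves implicit.
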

	\begin{proof}
		Assume that every vertex of $G-v$ crosses neither $\bdd{C_1}{C_2}$ nor $\bdd{C_2}{C_1}$.
Let $\calS_1$ denote all neighbors of $C_1$ in $\calT$ such that for each $S_1 \in \calS_1$, $\bdd{C_1}{S_1} \subseteq C_1 \setminus \bdd{C_1}{C_2}$; let $\calS_2$ denote the set of all neighbors of $C_2$ in $\calT$ such that for each $S_2 \in \calS_2$, $\bdd{C_2}{S_2} \subseteq C_2 \setminus \bdd{C_2}{C_1}$; and let $\calS_{12}$ denote the set of all neighbors of $C_1$ or $C_2$ such that for each $S_1 \in \calS_{12} \cap N_\calT(C_1)$, $\bdd{C_1}{S_1} \subseteq \bdd{C_1}{C_2}$, and for each $S_2 \in \calS_{12} \cap N_\calT(C_2)$, $\bdd{C_2}{S_2} \subseteq \bdd{C_2}{C_1}$.
		Since no vertex of $G-v$ crosses neither $\bdd{C_1}{C_2}$ nor $\bdd{C_2}{C_1}$, $\calS_1 \cup \calS_2 \cup \calS_{12} = N_\calT(C_1) \cup N_\calT(C_2) \setminus \{C_1, C_2\}$.
		
		Now, let $C_1' \defeq C_1 \setminus \bdd{C_1}{C_2}$, $C_2' \defeq C_2 \setminus \bdd{C_2}{C_1}$, and $C_{12}' \defeq \bdd{C_1}{C_2} \cup \bdd{C_2}{C_1}$. We obtain $\calT'$ from $\calT$ as follows.
		\begin{itemize}
			\item Remove $C_1$ and $C_2$; add $C_1'$, $C_2'$, and $C_{12}'$; make $C_1'$ and $C_2'$ adjacent to $C_{12}'$.
			\item Make all bags in $\calS_1$ adjacent to $C_1'$, all bags in $\calS_2$ adjacent to $C_2'$, 
				and all bags in $\calS_{12}$ adjacent to $C_{12}'$.
			\item Add a new bag $C_v \defeq \{v\}$, and make it adjacent to $C_{12}'$.
		\end{itemize}
		This yields a partition tree for $G$.
	\end{proof}

	Considering Step~\ref{bcp:outline:1}, we present some lemmas useful to find an obstruction. 
	To describe subparts of the long obstructions $W_{s,t}$, we use the 
	graphs $W_{1, t}^-$ and $W_{2, t}^-$ as shown in Figure~\ref{fig:Ws}. Note that each of them has a distinguished vertex~$r$, that we call \emph{terminal}.
	\begin{figure}
		\centering
		\scalebox{0.85}{
			\usetikzlibrary{decorations.pathreplacing,calc}
			\tikzset{
				bag/.style={
					shape=circle,
					draw,
					minimum width=2.5cm,
				}
			}

				\begin{tikzpicture}
					\begin{scope}[]
					\node [] (name) at (3, -0.75) {$W_{1, t}^-$, $t \ge 0$};
					\node [vtx] (a) at (0.75, 0) {};
					\node [vtx, above left of = a] (b) {};
					\node [vtx, above right of = b] (c) {};
					\node [vtx, above right of = a] (d) {};
					\node [vtx, above right of = d, node distance=.75cm] (t12) {};
					\node [vtx, below right of = t12, node distance=.75cm] (t13) {};
					\node [vtx, right of = t13] (t31) {};
					\node [vtx, above right of = t31, node distance=.75cm] (t32) {};
					\node [vtx, below right of = t32, node distance=.75cm, label=right:{$r$}] (t33) {};
				
					\draw[edge] (a) to (b);
					\draw[edge] (a) to (d);
					\draw[edge] (b) to (c);
					\draw[edge] (b) to (d);
					\draw[edge] (c) to (d);
		
		
					\draw[edge] (d) to (t12);
		
					\draw[edge]  (d) to (t13);

					\draw[edge] (t12) to (t13);
				
					\draw[nonedge] (t13) to (t31);
					\draw[edge] (t31) to (t32);
					\draw[edge] (t31) to (t33);
					\draw[edge] (t32) to (t33);
		
					\draw [decorate,decoration={brace,amplitude=10pt,raise=5pt}]
							(t33) -- (d) node [black,midway,yshift=-.5cm] {{\small $t$ triangles}};
		
				\end{scope}
				\begin{scope}[xshift=7.5cm]
					\node [] (name) at (3, -0.75) {$W_{2, t}^-$, $t \ge 0$};
					\node [vtx] (a) at (0.75, 0) {};
					\node [vtx, above left of = a] (b) {};
					\node [vtx, above right of = b] (c) {};
					\node [vtx, above right of = a] (d) {};
					\node [vtx, above left of = b] (e) {};
					\node [vtx, above right of = e] (f) {};
					\node [vtx, above right of = d, node distance=.75cm] (t12) {};
					\node [vtx, below right of = t12, node distance=.75cm] (t13) {};
					\node [vtx, right of = t13] (t31) {};
					\node [vtx, above right of = t31, node distance=.75cm] (t32) {};
					\node [vtx, below right of = t32, node distance=.75cm, label=right:{$r$}] (t33) {};
		
					\draw[edge] (a) to (b);
					\draw[edge] (a) to (c);
					\draw[edge] (a) to (d);
					\draw[edge] (b) to (c);
					\draw[edge] (b) to (d);
					\draw[edge] (c) to (d);
		
					\draw[edge] (b) to (e);
					\draw[edge] (c) to (e);
					\draw[edge] (c) to (f);
					\draw[edge] (e) to (f);	
		
		
					\draw[edge] (d) to (t12);
		
					\draw[edge]  (d) to (t13);

					\draw[edge] (t12) to (t13);
				
					\draw[nonedge] (t13) to (t31);
					\draw[edge] (t31) to (t32);
					\draw[edge] (t31) to (t33);
					\draw[edge] (t32) to (t33);
		
					\draw [decorate,decoration={brace,amplitude=10pt,raise=5pt}]
							(t33) -- (d) node [black,midway,yshift=-.5cm] {{\small $t$ triangles}};
		
				\end{scope}
				\end{tikzpicture}
		}
		\caption{The graphs $W_{1, t}^-$ and $W_{2, t}^-$.}
		\label{fig:Ws}
	\end{figure}
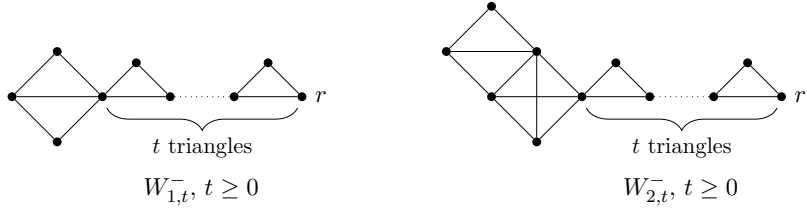
	The following lemma will be useful to find a wing at the beginning of a boundary-crossing path.
	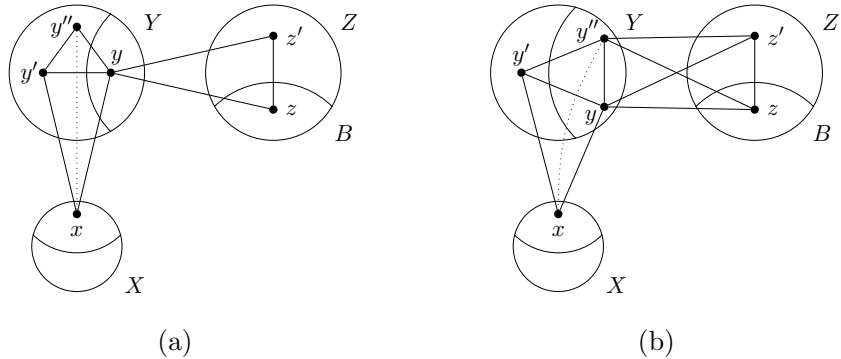
\begin{figure}
		\centering
		\scalebox{0.8}{
			\begin{tikzpicture}
				\begin{scope}[]
					\node[bag] (C1) at (0,0) {};
					\node[anchor = south west] at (C1.30) {$Y$};
	
					\draw[bend left=45] (C1.300) to (C1.60);
		
					\node[smallbag] (C0) at (C1.270) [below = 1cm] {};
					\node[anchor = north west] at (C0.330) {$X$};
		
					\draw[bend right=45] (C0.165) to (C0.15);
		
					\node[bag] (C2) at (C1.0) [right = 1cm] {};
					\node[anchor = south west] at (C2.30) {$Z$};
		
					\draw[bend left =45] (C2.210) to (C2.330);
		
					\node[] (mid) at ($(C1)!0.5!(C2)$) {};
					\node[figlabel] at ($(mid)+(0,-4.5)$) {(a)};
		
					\node[vtx, label=below:{$x$}] (q) at (C0.90) [below=.15cm] {};		
		
					\node[vtx] (p) at (C1.0) [left=.5cm] {};
					\node[] at ($(p)+(0.1,0.25)$) {$y$};		
		
					\node[vtx] (pp) at (C1.180) [right=.5cm] {}; 
					\node[] at ($(pp)+(-0.225,0)$) {$y'$};
					\node[] at ($(pp)+(5,-1)$) {$B$};
		
					\node[vtx, label=right:{$z$}] (s) at (C2.270) [above=.45cm] {};
					\node[vtx, label=right:{$z'$}] (ss) at (C2.90) [below=.45cm] {};
		
					\draw[edge] (p) to (pp);
					\draw[edge] (p) to (q);
					\draw[edge] (pp) to (q);
		
					\draw[edge] (p) to (s);
					\draw[edge] (p) to (ss);
					\draw[edge] (s) to (ss);
		
					\node[vtx] (ppp) at (C1.90) [below=.3cm] {};
					\node[] at ($(ppp)+(-0.25,0)$) {$y''$};
		
					\draw[nonedge] (ppp) to (q);
					\draw[edge] (p) to (ppp);
					\draw[edge] (pp) to (ppp);
		
				\end{scope}
	
				\begin{scope}[xshift=8cm]
					\node[bag] (C1) at (0,0) {};
					\node[anchor = south west] at (C1.30) {$Y$};
	
					\draw[bend left=45] (C1.285) to (C1.75);
		
					\node[smallbag] (C0) at (C1.270) [below = 1cm] {};
					\node[anchor = north west] at (C0.330) {$X$};
		
					\draw[bend right=45] (C0.165) to (C0.15);
		
					\node[bag] (C2) at (C1.0) [right = 1cm] {};
					\node[anchor = south west] at (C2.30) {$Z$};
		
					\node[] (mid) at ($(C1)!0.5!(C2)$) {};
					\node[figlabel] at ($(mid)+(0,-4.5)$) {(b)};
		
					\draw[bend left =45] (C2.210) to (C2.330);
		
					\node[vtx, label=below:{$x$}] (q) at (C0.90) [below=.15cm] {};		
		
					\node[vtx] (p) at (C1.330) [left=.15cm] {};
					\node[] at ($(p)+(-0.225,-0.175)$) {$y$};
		
					\node[vtx] (pp) at (C1.180) [right=.45cm] {};
					\node[] at ($(pp)+(0,0.3)$) {$y'$}; 
		
					\node[vtx, label=right:{$z$}] (s) at (C2.270) [above=.45cm] {};
					\node[vtx, label=right:{$z'$}] (ss) at (C2.90) [below=.45cm] {};
		
					\draw[edge] (p) to (pp);
					\draw[edge] (p) to (q);
					\draw[edge] (pp) to (q);
		
					\draw[edge] (p) to (s);
					\draw[edge] (p) to (ss);
					\draw[edge] (s) to (ss);
		
					\node[vtx] (ppp) at (C1.30) [left=.15cm] {};
					\node[] at ($(ppp)+(-0.275,0.125)$) {$y''$};
		
					\draw[nonedge, bend left=15] (q) to (ppp);
					\draw[edge] (p) to (ppp);
					\draw[edge] (pp) to (ppp);
					\draw[edge] (ppp) to (s);
					\draw[edge] (ppp) to (ss);
							\node[] at ($(pp)+(5,-1)$) {$B$};

				\end{scope}
			\end{tikzpicture}
		}
		\caption{Visual aides to the proof of Lemma~\ref{claim:step:crossing-general}.}
		\label{fig:step:crossing}
	\end{figure}
		\begin{lemma}\label{claim:step:crossing-general}
		Let $XYZ$ be a boundary-crossing path in $\calT$ such that $\bdd{Y}{Z}$ is complete to $Z$, and $B$ be a non-empty proper subset of $Z$.
		If one of the following conditions does not hold, then
		one can in polynomial time output an induced subgraph $H$ of $G[X \cup X'\cup Y \cup Z]$ for some neighbor $X'$ of $Y$ in $\calT$ ($X'$ can be $X$) that is isomorphic to 
		$W_{1, 1}^-$ or $W_{2, 0}^-$, 
		with the terminal vertex being mapped to a vertex in $B$, say $r_H$,
		such that $V(H) \cap B = \{r_H\}$.	
		\begin{enumerate}[label={\normalfont(\roman*)}]
			\item\label{claim:step-crossing-general:complete} $\bdd{X}{Y}$ is complete to $Y$.
			\item\label{claim:step-crossing-general:unique} There is no bag $X' \in V(\calT)\setminus \{X\}$ that contains vertices crossing $\bdd{Y}{Z}$.
		\end{enumerate}
	\end{lemma}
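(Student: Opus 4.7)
The plan is to branch on which condition fails and to produce the required obstruction as an induced subgraph on six carefully chosen vertices. First I set up common notation. Since $XYZ$ is a boundary-crossing path there is $x\in X$ with a neighbor $y\in Y\setminus\bdd{Y}{Z}$ and a neighbor $y'\in\bdd{Y}{Z}$; in particular both $\bdd{Y}{Z}$ and $Y\setminus\bdd{Y}{Z}$ are non-empty. The hypothesis that $\bdd{Y}{Z}$ is complete to $Z$, together with $Z$ being a clique, implies that $y'$ is adjacent to every vertex of $Z$. Since $\emptyset\neq B\subsetneq Z$, I also fix $r\in B$ and $z\in Z\setminus B$, both adjacent to $y'$ and mutually adjacent in the clique $Z$.

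Assume first that (i) fails, so there is $y''\in Y$ with no neighbor in $X$; by the clique property of $Y$, $y''$ is adjacent to both $y$ and $y'$, but not to $x$. I then split on whether $y''$ lies in $\bdd{Y}{Z}$. If it does, then $\{y',y'',z,r\}$ induces a $K_4$ and a check of all adjacencies shows that $G[\{x,y,y',y'',r,z\}]$ is isomorphic to $W_{2,0}^-$, with the attached diamond $\{y',y'',y,x\}$ sharing the edge $y'y''$ with the $K_4$ and missing the edge $xy''$; I take $r_H=r$. If $y''\notin\bdd{Y}{Z}$, then $y''$ is adjacent to no vertex of $Z$, and $G[\{x,y,y',y'',r,z\}]$ is isomorphic to $W_{1,1}^-$: the diamond $\{x,y,y',y''\}$ (missing $xy''$) shares only the vertex $y'$ with the triangle $\{y',z,r\}$, and again $r_H=r$.

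Next assume that (i) holds while (ii) fails, so some bag $X'\neq X$ contains a vertex $w$ crossing $\bdd{Y}{Z}$, with neighbors $\tilde y\in Y\setminus\bdd{Y}{Z}$ and $\tilde y'\in\bdd{Y}{Z}$. Since $\bdd{X}{Y}$ is complete to $Y$, any $x\in\bdd{X}{Y}$ is adjacent to $\tilde y$ and $\tilde y'$, so I replace the previous choice by such an $x$ and set $y=\tilde y$, $y'=\tilde y'$. Because $X$ and $X'$ are distinct bags both neighbouring $Y$ in the tree $\calT$ (hence non-adjacent in $\calT$), $x$ and $w$ are non-adjacent in $G$. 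Then $G[\{x,w,y,y',r,z\}]$ is isomorphic to $W_{1,1}^-$, realised by the diamond $\{x,y,w,y'\}$ (missing $xw$) meeting the triangle $\{y',z,r\}$ exactly at $y'$, with $r_H=r$.

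In every case $V(H)\cap B=\{r\}$, because $V(H)\cap Z=\{r,z\}$ and $z\notin B$, while all other vertices of $H$ lie in $X\cup X'\cup Y$. Locating the witnesses is straightforward in polynomial time, and the construction itself takes constant time once they are found; the main obstacle is the careful verification of adjacencies between the four bags, which follows by unfolding the definitions of the boundaries, the completeness of $\bdd{Y}{Z}$ to $Z$, and the absence of edges between non-adjacent bags of $\calT$.
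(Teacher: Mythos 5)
Your proof is correct and takes essentially the same route as the paper's: fix a crossing vertex $x \in \bdd{X}{Y}$ with neighbors on both sides of $\bdd{Y}{Z}$, use the completeness of $\bdd{Y}{Z}$ to $Z$ to pick the two $Z$-vertices (one in $B$, one in $Z \setminus B$), and in each failure case exhibit the same six-vertex induced $W_{1,1}^-$ (for $y''$ outside $\bdd{Y}{Z}$, and for the failure of (ii) using (i)) or $W_{2,0}^-$ (for $y''$ inside $\bdd{Y}{Z}$) with terminal in $B$. Your unfolding of the failure of (i) as ``some $y'' \in Y$ has no neighbor in $X$'' (valid because edges between adjacent bags form a complete bipartite graph between the boundaries) and your explicit tree-based non-adjacency argument for $x$ and $w$ are, if anything, slightly more careful than the paper's wording, but the construction is the same.
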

	\begin{proof}
		Let $x \in \bdd{X}{Y}$ be a vertex that crosses $\bdd{Y}{Z}$. 
		Choose a neighbor $y$ in $\bdd{Y}{Z}$ and a neighbor $y'$ in $Y \setminus \bdd{Y}{Z}$ of $x$. 
		Since $\bdd{Y}{Z}$ is complete to $Z$ by assumption, $y$ has a neighbor in $B$ and a neighbor in $Z \setminus B$. Let $z$ and $z'$ be these neighbors, respectively. We illustrate this situation and the following arguments in Figure~\ref{fig:step:crossing}.
		
		Suppose that~\ref{claim:step-crossing-general:complete} does not hold, i.e.\ that
		$\bdd{X}{Y}$, in particular the vertex $x$ is not complete to $Y$. Then, $x$ has a non-neighbor, say $y''$ in $Y$. If $y'' \in Y\setminus\bdd{Y}{Z}$, then the set $\{x, y, y', y'', z, z'\}$ induces a $W_{1, 1}^-$ with the terminal being mapped to $z$. See Figure~\ref{fig:step:crossing}(a). On the other hand, if $y'' \in \bdd{Y}{Z}$, then $\{x, y, y', y'', z, z'\}$ induce a $W_{2, 0}^-$ with the terminal vertex being mapped to $z$. See Figure~\ref{fig:step:crossing}(b).
		
		Now suppose that~\ref{claim:step-crossing-general:unique} does not hold, and let $x' \in X'$ be a vertex crossing $\bdd{Y}{Z}$. Then, $x'$ has a neighbor $y \in \bdd{Y}{Z}$ and a neighbor $y' \in Y \setminus \bdd{Y}{Z}$. Let $x \in X$. By (i), $x$ is adjacent to $y$ and $y'$. Then $G[X\cup X'\cup Y\cup Z]$ contains a $W_{1, 1}^-$ with terminal $z$. 
	\end{proof}

	We use the following lemmas to find an obstruction or extend a good boundary-crossing path.
	
	\begin{lemma}\label{lem:extendgood-pre}
	Let $G$ be a connected well-partitioned chordal graph with partition tree $\mathcal{T}$, and let $B$ be a vertex set contained in some bag $C_1$.
	If $C_kC_{k-1} \ldots C_1$ is a boundary-crossing path for some $k\ge 2$ such that $C_2$ has a vertex that crosses $B$ in $C_1$, then 
	one can in polynomial time either 
	\begin{enumerate}
	\item\label{extendgood-pre:1} output an induced subgraph $H$ isomorphic to $W_{s,t}^-$ for some $s\in \{1,2,3\}$ and $t\ge 0$ with terminal $v$ such that $V(H)\cap B=\{v\}$,
	\item\label{extendgood-pre:2} output an induced subgraph $H$ isomorphic to $W_{1,0}^-$ on $\{a, z_1, z_2, w\}$ such that both $a$ and $w$ have degree $2$ in $H$, $a\in \bdd{D}{C_1}$ for some neighbor bag $D$ of $C_1$, $z_2\in C_1\setminus B$, and $z_1, w\in B$, or
	\item\label{extendgood-pre:3} verify that it is a good boundary-crossing path such that $\bdd{C_2}{C_1}$ is complete to $C_1$ and no other bag contains a vertex crossing $B$ in $C_1$.	  
	\end{enumerate}
	\end{lemma}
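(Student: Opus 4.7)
The plan is to test the three conditions of outcome~(iii) in sequence and to exhibit an obstruction of the form required by outcome~(1) or outcome~(2) at the first failure. Throughout, fix the prescribed vertex $p \in C_2$ crossing $B$ in $C_1$, together with neighbors $y \in B$ and $y' \in C_1 \setminus B$ of $p$.

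First, I would check whether $\bdd{C_2}{C_1}$ is complete to $C_1$. If not, some $q \in \bdd{C_2}{C_1}$ has a non-neighbor $z \in C_1$. Because $C_1$ and $C_2$ are cliques, four suitable vertices drawn from $\{p, q, y, y', z\}$ induce a diamond. A short case analysis on whether $q = p$ and on whether $z \in B$ or $z \in C_1 \setminus B$ places this diamond either as a $W_{1,0}^-$ whose terminal is its unique vertex of $B$ (giving outcome~(1)), or in the precise shape prescribed by outcome~(2).

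Second, assuming $\bdd{C_2}{C_1}$ is complete to $C_1$, I would check whether some bag $D \ne C_2$ contains a vertex $u$ crossing $B$ in $C_1$. If so, $u$ has neighbors $y_u \in B$ and $y_u' \in C_1 \setminus B$, and combining $u$ with $p$, $y$, $y'$, and these vertices isolates a $W_{1,0}^-$ that meets $B$ only at its terminal, yielding outcome~(1). Two sub-cases arise according to whether $D$ is a $\calT$-neighbor of $C_2$ (and hence whether $u \sim p$), but both conclude in the same way.

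Third, assuming both prior conditions hold (the present stage is vacuous when $k = 2$), I would iterate Lemma~\ref{claim:step:crossing-general} over $j = 3, 4, \ldots, k$ to verify that the path is good. At step~$j$, the lemma is applied with $X = C_j$, $Y = C_{j-1}$, $Z = C_{j-2}$, and subset $B_j \defeq B$ when $j = 3$ and $B_j \defeq \bdd{C_{j-2}}{C_{j-3}}$ otherwise; its hypothesis that $\bdd{Y}{Z}$ is complete to $Z$ is furnished by Stage~1 when $j = 3$ and by condition~(i) of the lemma at step~$j-1$ when $j \ge 4$, while $XYZ$ is a boundary-crossing path because the original path is. If every step passes, return outcome~(iii). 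Otherwise, the lemma produces at some step a $W_{1,1}^-$ or $W_{2,0}^-$ with terminal in $B_j$; for $j = 3$ the terminal already lies in $B$ and this realises outcome~(1), and for $j \ge 4$ I would splice the small obstruction onto the chain of triangles built by choosing one representative each from $\bdd{C_{j-2}}{C_{j-3}}, \bdd{C_{j-3}}{C_{j-4}}, \ldots, \bdd{C_2}{C_1}$ and then appending $y, y' \in C_1$, obtaining a $W_{s, t}^-$ with $t = j - 3$ and terminal in $B$.

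The main obstacle is the splicing in the third stage: the representatives chosen along the chain, together with the small obstruction, must induce exactly the required $W_{s, t}^-$ and not a chordal graph with additional chords. This is where the exclusivity guaranteed by condition~(ii) of Lemma~\ref{claim:step:crossing-general} at every earlier step of the iteration, combined with the completeness between adjacent boundaries from condition~(i) and Stage~1, pay off: they prevent vertices outside the chosen chain from creating unwanted adjacencies between the appended triangles.
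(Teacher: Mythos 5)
Your overall architecture is sound and matches the paper's: stages 1 and 2 are the paper's base case $k=2$, and your forward iteration of Lemma~\ref{claim:step:crossing-general} in stage 3, instantiated with $X=C_j$, $Y=C_{j-1}$, $Z=C_{j-2}$ and subset $B$ (for $j=3$) or $\bdd{C_{j-2}}{C_{j-3}}$ (for $j\ge 4$), with the completeness hypothesis handed forward from the previous step, is exactly the paper's induction on $k$ unrolled. Two small observations on the early stages: by the definition of a partition tree, the edges between adjacent bags form a complete bipartite graph, so every vertex of $\bdd{C_2}{C_1}$ has the \emph{same} neighborhood $\bdd{C_1}{C_2}$ in $C_1$; hence your case distinction $q=p$ versus $q\neq p$ in stage 1 collapses (any $q\in\bdd{C_2}{C_1}$ crosses $B$, and the diamond $\{q,y,y',z\}$ always exists). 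Likewise, in stage 2 your hedge about $D$ being a $\calT$-neighbor of $C_2$ is vacuous: a vertex crossing $B$ in $C_1$ must lie in a bag adjacent to $C_1$ in $\calT$, so $DC_2\notin E(\calT)$ and $u\not\sim p$ always; your diamond $\{p,u,y_u,y_u'\}$ then gives outcome~(1) directly, which is in fact slightly more economical than the paper's route (which first forces $u$ to be complete to $C_1$).

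The genuine gap is the splice in stage 3. Choosing \emph{one} representative $x_i$ from each boundary $\bdd{C_i}{C_{i-1}}$, $i=j-2,\ldots,2$, does not build a chain of triangles: consecutive representatives are adjacent (by the completeness already verified), but $x_{i+1}\not\sim x_{i-1}$ since $C_{i+1}$ and $C_{i-1}$ are non-adjacent bags of the tree $\calT$. So your representatives induce a \emph{path}, and a wing followed by a bare path followed by the single triangle $\{x_2,y,y'\}$ is not isomorphic to any $W^-_{s,t}$; outcome~(1) is not realized. The paper instead chooses from each intermediate bag $C_i$ a \emph{pair}: a cut vertex $x_i\in\bdd{C_i}{C_{i-1}}$ (with $x_1\in B$) and an apex $y_i\in C_i\setminus\bdd{C_i}{C_{i-1}}$ (with $y_1\in C_1\setminus B$), both neighbors of $x_{i+1}$, starting from $x_{j-2}:=r$. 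These exist because $x_{i+1}\in\bdd{C_{i+1}}{C_i}$ is adjacent to all of $C_i$ by completeness, and $C_i\setminus\bdd{C_i}{C_{i-1}}\neq\emptyset$ because the path is boundary-crossing. The triangles $\{x_{i+1},x_i,y_i\}$ are glued at the $x_i$, and the chain is induced not because of exclusivity, as your closing paragraph suggests, but because each $y_i$, lying outside $\bdd{C_i}{C_{i-1}}$, has no neighbor in $C_{i-1}$ (and no neighbor in $C_{i+1}$ beyond $x_{i+1}$, since $y_{i+1}\notin\bdd{C_{i+1}}{C_i}$), while non-consecutive bags are non-adjacent in $\calT$; the guarantee $V(H)\cap\bdd{C_{j-2}}{C_{j-3}}=\{r\}$ from Lemma~\ref{claim:step:crossing-general} is what isolates $H$ from the chain. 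Your triangle count is off for the same reason: the result is $W^-_{s',t'}$ with $t'=j-3$ if $H\cong W^-_{2,0}$ but $t'=j-2$ if $H\cong W^-_{1,1}$, since the latter already carries one triangle.
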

	\begin{proof}
	We prove the lemma by induction on $k$.
	Assume that $k=2$. 
	We check whether $\bdd{C_2}{C_1}$ is complete to $C_1$. 
	Suppose not.
	Let $a\in \bdd{C_2}{C_1}$.
	Let $z_1$ be a neighbor of $a$ in $B$, 
	$z_2$ be a neighbor of $a$ in $C_1\setminus B$, 
	and $w$ be a non-neighbor of $a$ in $C_1$.
	If $w\in C_1\setminus B$, then $\{a,z_1, z_2, w\}$ induces $W_{1,0}^-$ with terminal $z_1$,
	so we have outcome~\ref{extendgood-pre:1}.
	If $w\in B$, then $\{a,z_1,z_2,w\}$ induces a graph as in case~\ref{extendgood-pre:2}.
	Otherwise, we conclude that $\bdd{C_2}{C_1}$ is complete to $C_1$.

	We find a bag $D\neq C_2$ in $\calT$ containing a vertex $d$ crossing $B$ in $C_1$.
		If such a vertex $d$ exists, then by the above procedure, we may assume that $d$ is complete to $C_1$.
	Then similarly to the previous case when $w\in C_1\setminus B$, again we can find an induced subgraph isomorphic to the diamond on $\{a,d,z_1,z_2\}$.
	If such a vertex does not exist, then we can conclude that no other bag contains a vertex crossing $B$ in $C_1$.

	Now, we assume that $k\ge 3$.
	By the induction hypothesis, the claim holds for the path $C_{k-1}C_{k-2} \ldots C_1$.
		We can assume that it is good.
		
		We check whether $\bdd{C_k}{C_{k-1}}$ is not complete to $C_{k-1}$,
		and there is a bag $D \in V(\calT) \setminus \{C_k\}$ that has a vertex crossing $\bdd{C_{k-1}}{C_{k-2}}$.
		If neither of them holds, then we verified that $C_kC_{k-1} \ldots C_1$ is good.
		Assume one of two statements holds.
		
	Let $X\defeq \bdd{C_{k-2}}{C_{k-3}}$ if $k\ge 4$ and $X=B$ if $k=3$.
		Now, by applying Lemma~\ref{claim:step:crossing-general} to the pair $(C_kC_{k-1}C_{k-2}, X)$, 
		we can find an induced subgraph $H$ isomorphic to $W_{1, 1}^-$ or $W_{2, 0}^-$ in $G[C_k \cup D\cup C_{k-1} \cup C_{k-2}]$ for some neighbor $D$ of $C_{k-1}$ in $\calT$
		so that its terminal $r$ is mapped to some vertex in $X$ and $V(H)\cap X=\{r\}$. 
		If $k=3$, then we have outcome~\ref{extendgood-pre:1} as $X=B$.
		
		Assume $k\ge 4$.
		Let $x_{k-2}\defeq r$.
		We recursively choose pairs of vertices $(x_i,y_i)$ for $i\in \{1,2, \ldots, k-3\}$ as follows.
		First assume $i>1$ and $x_{i+1}$ is defined but $x_i$ is not defined yet.
		Then choose a neighbor $x_i$ of $x_{i+1}$ in $\bdd{C_i}{C_{i-1}}$ and a neighbor 
		$y_i$ of $x_{i+1}$ in $C_i\setminus \bdd{C_i}{C_{i-1}}$.
		Such neighbors exists since $x_{i+1}$ crosses $\bdd{C_i}{C_{i-1}}$.
		When $i=1$, choose a neighbor $x_1$ of $x_2$ in $B$ and $y_1$ of $x_2$ in $C_1\setminus B$. 
		Then it is clear that $G[ \{x_1, y_1, x_2, y_2, \ldots, x_{k-3}, y_{k-3} \} \cup V(H)]$ is isomorphic to $W_{s', t'}^-$ for some $s'\in \{1,2\}$ and $t'\ge 0$ with terminal $x_1$ such that its intersection on $B$ is exactly $x_1$. 
		This concludes the lemma.
	\end{proof}
	
	\begin{lemma}\label{lem:extendgood}
	Let $G_1$ and $G_2$ be two connected graphs with non-empty sets $A\subseteq V(G_1)$ and $B\subseteq V(G_2)$, and 
	$G$ be the graph obtained from the disjoint union of $G_1$ and $G_2$ by adding all edges between $A$ and $B$ such that 
	\begin{itemize} 
		\item for every $v\in B$, $G[V(G_1)\cup \{v\}]$ is isomorphic to $W^-_{s,t}$ with terminal $v$ for some $s\in \{1,2\}$ and $t\ge 0$,
		\item $G_2$ is a well-partitioned chordal graph with a partition tree $\mathcal{T}$ such that $B$ is contained in some bag $C_1$.
	\end{itemize}
	Then the following two statements hold.
	\begin{enumerate}[label=(\arabic*)]
		\item If $C_kC_{k-1} \ldots C_1$ is a boundary-crossing path in $\calT$ for some $k\ge 2$ such that $C_2$ has a vertex that crosses $B$ in $C_1$, then 
	one can in polynomial time either output an obstruction in $G$, 
	or verify that it is a good boundary-crossing path such that $\bdd{C_2}{C_1}$ is complete to $C_1$ and no other bag contains a vertex crossing $B$ in $C_1$.	  
		\item If $C_2C_1$ is a boundary-crossing path in $\calT$, that is, an edge in $\calT$, then one can in polynomial time either output an obstruction in $G$, 
		or find a maximal good boundary-crossing path ending in $C_2C_1$ such that $\bdd{C_2}{C_1}$ is complete to $C_1$ and no other bag contains a vertex crossing $B$ in $C_1$.	 
		\end{enumerate}
	\end{lemma}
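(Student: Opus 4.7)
The plan is to apply Lemma~\ref{lem:extendgood-pre} to the path $C_kC_{k-1}\ldots C_1$ in $G_2$ with the subset $B\subseteq C_1$, and handle each of its three outcomes. If outcome~\ref{extendgood-pre:3} is returned, the path is already good with the desired conditions and we output it. If outcome~\ref{extendgood-pre:1} produces a subgraph $H\cong W_{s,t}^-$ in $G_2$ with terminal $r\in B$ and $V(H)\cap B=\{r\}$, the hypothesis on $G_1$ yields $G[V(G_1)\cup\{r\}]\cong W_{s',t'}^-$ with $r$ as terminal. The crucial observation is that, since edges between $V(G_1)$ and $V(G_2)$ run only between $A$ and $B$ and $V(H)\cap B=\{r\}$, the induced subgraph $G[V(G_1)\cup V(H)]$ is precisely the gluing of the two $W^-$-pieces along their shared terminal $r$. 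This gluing produces one of the obstructions $W_{s'',t''}\in\obswp$, with wing-types determined by $s$ and $s'$ and chain length $t''=t+t'$.

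For outcome~\ref{extendgood-pre:2}, the situation is more delicate: we obtain an induced $W_{1,0}^-$ on $\{a,z_1,z_2,w\}$ with \emph{two} vertices $z_1,w\in B$. I would apply the hypothesis to $z_1$, so $G[V(G_1)\cup\{z_1\}]\cong W_{s',t'}^-$ with terminal $z_1$. Since $z_1,w$ both lie in the clique $C_1\supseteq B$ and both are complete to $A$, a case analysis on $(s',t')$ of the induced subgraph $G[V(G_1)\cup\{a,z_1,z_2,w\}]$ shows that it contains an obstruction from $\obswp$; for instance, the base case $(s',t')=(1,0)$ produces $O_3$ by a direct identification of its $7$ vertices and $13$ edges, and larger $t'$ produces longer $W$-type obstructions by extending the chain contributed by $G_1$.

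Part~(2) proceeds by iteration. Starting with the trivial $2$-bag path $P=C_2C_1$, I first handle the $C_1$-end conditions, namely that $\bdd{C_2}{C_1}$ is complete to $C_1$ and that no bag other than $C_2$ has a vertex crossing $B$ in $C_1$. Any violation produces a diamond-like substructure in $G_2$ which, when glued with $G_1$ via the hypothesis, yields an obstruction in $G$ by the same reasoning as in outcome~\ref{extendgood-pre:2} above, essentially mirroring the $k=2$ base case of Lemma~\ref{lem:extendgood-pre}. I then iteratively try to extend $P$ at its front by appending a bag adjacent to the current front-bag containing a vertex crossing $\bdd{C_k}{C_{k-1}}$; at each extension, Part~(1) is applied to verify goodness or produce an obstruction. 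When no further extension is possible, $P$ is maximal and good. Polynomial-time termination is clear since $\calT$ has polynomially many bags and each extension strictly increases the path length.

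The main obstacle, as noted, is the case analysis for outcome~\ref{extendgood-pre:2}: unlike the clean gluing in outcome~\ref{extendgood-pre:1}, where the two $W^-$-pieces share a single terminal, here we must attach $G_1$ to two distinct vertices of $B$ simultaneously, and the resulting obstruction depends on the specific structure of $W_{s',t'}^-$. I would handle this by treating $(s',t')=(1,0)$ by direct identification as $O_3$, and then leveraging the chain structure of $W_{s',t'}^-$ to generalize to arbitrary $(s',t')$ via a straightforward extension of the chain into a longer $W_{s'',t''}$.
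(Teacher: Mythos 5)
Your overall route is the paper's own: apply Lemma~\ref{lem:extendgood-pre} to $(G_2, B)$, pass outcome~\ref{extendgood-pre:3} through unchanged, glue outcome~\ref{extendgood-pre:1} with $G_1$ along the shared terminal $r$ (which is clean, since $V(H)\cap B=\{r\}$ and all $G_1$--$G_2$ edges run between $A$ and $B$, so the two $W^-$-pieces meet only at $r$ and the result is $W_{s'',t''}$ with $t''=t+t'$), and prove part~(2) by iterating part~(1) along the tree. All of that matches the paper and is fine.

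The genuine gap is your resolution of outcome~\ref{extendgood-pre:2}, which is exactly the step you flagged as delicate. You propose to ``generalize to arbitrary $(s',t')$ via a straightforward extension of the chain into a longer $W_{s'',t''}$'', but the gluing does not extend the chain --- it \emph{shortens} it. Since $z_1,w\in B$, \emph{both} are complete to $A$, and $z_1w\in E(G)$ because both lie in the clique $C_1$. For $t'\ge 1$, $A$ is precisely the edge formed by the two non-terminal vertices of the last chain triangle of $W^-_{s',t'}$, so $A\cup\{z_1,w\}$ induces a $K_4$; adding $z_2$ (adjacent to $z_1$ and $w$ but to nothing in $G_1$, as $z_2\in C_1\setminus B$) and $a$ (adjacent only to $z_1,z_2$, as $a$ lies in another bag) turns this junction into a large wing. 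In particular $w$ has the four neighbours $A\cup\{z_1,z_2\}$ in the glued graph and cannot play the role of a chain vertex, so the identification with a longer chain you propose does not exist as an induced subgraph; the correct statement (and the one the paper proves) is $G[V(G_1)\cup\{a,w,z_1,z_2\}]\cong W_{s'',t'-1}$ with $s''\in\{2,3\}$ --- wing grows by one type, chain drops by one triangle. Relatedly, your scheme silently skips $(s',t')=(2,0)$: since there is no chain triangle to absorb, the glued graph is the sporadic obstruction $O_4$, not any $W_{s,t}$, and it needs its own direct identification just like your $(1,0)\mapsto O_3$ case. Without these two corrections the graph you output in case~\ref{extendgood-pre:2} is not a member of $\obswp$, so part~(1) --- and hence part~(2), which relies on it --- does not go through as written.
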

	\begin{proof}
	We prove (1). We apply Lemma~\ref{lem:extendgood-pre} to $G_2$ and $B$, then 
	we conclude that in polynomial time, we can either
	\begin{enumerate}
	\item output an induced subgraph $H$ isomorphic to $W_{s,t}^-$ for some $s\in \{1,2,3\}$ and $t\ge 0$ with terminal $v$ such that $V(H)\cap B=\{v\}$,
	\item output an induced subgraph $H$ isomorphic to the diamond on $\{a, z_1, z_2, w\}$ such that $a, w$ have degree $2$ in $H$,  $a\in \bdd{D}{C_1}$ for some neighbor bag $D$ of $C_1$, $z_2\in C_1\setminus B$, and $z_1, w\in B$, or
	\item verify that it is a good boundary-crossing path such that $\bdd{C_2}{C_1}$ is complete to $C_1$ and no other bag contains a vertex crossing $B$ in $C_1$.	  
	\end{enumerate}
   For case (i) it is clear that together with an obstruction $W^-_{s,t}$ in $G[V(G_1)\cup \{v\}]$ given by the assumption, 
   $G[V(H)\cup V(G_1)]$ is isomorphic to $W_{s,t}$ for some $s\in \{1,2,3\}$ and $t\ge 0$.
   For case (ii), we can observe that $G[V(H)\cup V(G_1)]$ is an obstruction as follows.
   \begin{itemize}
   	\item If $G[V(G_1)\cup\{z_1\}]$ is isomorphic to $W^-_{1,0}$, then $G[V(G_1)\cup\{a,w,z_1,z_2\}]$ is isomorphic to $O_3$.
   	\item If $G[V(G_1)\cup\{z_1\}]$ is isomorphic to $W^-_{2,0}$, then $G[V(G_1)\cup\{a,w,z_1,z_2\}]$ is isomorphic to $O_4$.
   	\item If $G[V(G_1)\cup\{z_1\}]$ is isomorphic to $W^-_{s,t}$ for some $s\in\{1,2\}$ and $t\geq1$, then $G[V(G_1)\cup\{a,w,z_1,z_2\}]$ is isomorphic to $W_{s',t-1}$ for some $s'\in\{2,3\}$.
   \end{itemize}
   It shows the statement (1).
		
		Now, we show (2).
	By (1), we can in polynomial time either output an obstruction, 
	or verify that $\bdd{C_2}{C_1}$ is complete to $C_1$ and no other bag crosses $\bdd{C_2}{C_1}$.
	For $i\ge 3$,
	we recursively find a neighbor bag $C_i$ of $C_{i-1}$ that has a vertex crossing $\bdd{C_{i-1}}{C_{i-2}}$.
	If there is such a bag $C_i$, 
	then by applying (1),
	one can in polynomial time find an obstruction or 
	guarantee that $C_iC_{i-1} \ldots C_1$ is good.
		As the graph is finite, this procedure terminates with some path $C_kC_{k-1} \ldots C_3$ such that 
	it is good and no vertex crosses $\bdd{C_k}{C_{k-1}}$, unless we found an obstruction.
	\end{proof}

\subsection{A certifying algorithm}\label{subsec:certifying}
In this subsection, we prove the following.
\begin{proposition}\label{prop:char}
	Given a graph $G$, one can in polynomial time 
	either output an obstruction in $G$ 
	or output a partition tree of $G$ confirming that $G$ is a well-partitioned chordal graph.
\end{proposition}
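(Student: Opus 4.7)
The plan is to prove Proposition~\ref{prop:char} by induction on $\abs{V(G)}$, combining the chordal preprocessing of Theorems~\ref{thm:nikolo} and~\ref{thm:generate} with the boundary-crossing machinery of Subsection~\ref{subsec:partial}. The base case $\abs{V(G)} \le 1$ is trivial: a single vertex is a well-partitioned chordal graph with a one-bag partition tree. For disconnected $G$ we apply the procedure componentwise, so assume $G$ is connected. I first invoke Theorem~\ref{thm:nikolo}; if it returns an induced hole $H_k$ with $k \ge 4$, I report it as an obstruction. Otherwise $G$ is chordal and Theorem~\ref{thm:generate} produces a perfect elimination ordering, whose first vertex $v$ is simplicial. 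I recurse on $G - v$, which is connected because a simplicial vertex in a connected graph cannot be a cut vertex (its neighborhood forms a clique). If the recursion returns an obstruction, I pass it up using that well-partitioned chordal graphs are closed under induced subgraphs; otherwise I obtain a partition tree $\calT$ of $G - v$, and it remains either to modify $\calT$ into a partition tree of $G$ or to produce an obstruction in $G$.

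Since $v$ is simplicial, $N_G(v)$ is a clique; Observation~\ref{obs:triangle} applied to any triangle inside $N_G(v)$ shows that $N_G(v)$ is contained in the union of at most two bags of $\calT$. If $N_G(v) \subseteq X$ for a single bag $X$, I attach a new leaf bag $\{v\}$ to $X$; items~\ref{def:wp:cliques}--\ref{def:wp:tree:nbh} are readily verified and yield a partition tree of $G$. Otherwise $N_G(v)$ meets exactly two bags $C_1$ and $C_2$; since $N_G(v)$ is a clique with vertices in both, item~\ref{def:wp:tree:nbh} forces $C_1C_2 \in E(\calT)$, and item~\ref{def:wp:tree:parent} forces $N_G(v) \cap C_i \subseteq \bdd{C_i}{C_{3-i}}$ for $i \in \{1, 2\}$.

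In this harder case I run the three-step scheme outlined in Subsection~\ref{subsec:partial}. \emph{Step~1.} Apply Lemma~\ref{lem:extendgood}(2) to the edge $C_2C_1$, with $G_2 = G - v$ and $G_1$ set up to encode $v$ together with its neighborhood so that any half-obstruction $W^-_{s,t}$ uncovered during a failed extension (through Lemma~\ref{lem:extendgood-pre} and Lemma~\ref{claim:step:crossing-general}) completes into a genuine member of $\obswp$ in $G$; the lemma then either returns such an obstruction, or certifies a maximal good boundary-crossing path $C_k\ldots C_2C_1$ in $\calT$ with $\bdd{C_2}{C_1}$ complete to $C_1$ and no foreign bag crossing it. Repeat symmetrically for the edge $C_1C_2$. \emph{Step~2.} Iteratively apply Lemma~\ref{lem:shorten:path} along each of the two maximal good paths, each application shrinking the path by one bag and updating $\calT$ to an equivalent partition tree of $G - v$; after at most $\abs{V(G)}$ iterations I arrive at a partition tree $\calT'$ in which no vertex of $G - v$ crosses $\bdd{C_1'}{C_2'}$ or $\bdd{C_2'}{C_1'}$, where $C_1', C_2'$ are the updated bags still containing $N_G(v) \cap C_1$ and $N_G(v) \cap C_2$. \emph{Step~3.} Apply Lemma~\ref{lem:base:nocrossing} to $\calT'$ to output a partition tree of $G$.

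The main obstacle is instantiating Step~1 correctly so that the $W^-_{s,t}$ half-obstructions emitted during failed path extensions combine, together with the triangles incident to $v$ and the traversed boundary-crossing path, into a full member of $\obswp$ in $G$. This is precisely the design purpose of the ``$^-$'' graphs: the piece missing from $W^-_{s,t}$ at its terminal is exactly the kind of structure supplied by the clique at $v$ spanning the bags $C_1$ and $C_2$. Granted this setup, the three steps each run in polynomial time by their respective lemmas, the recursion depth is at most $\abs{V(G)}$, and the overall running time is polynomial, completing the proof of Proposition~\ref{prop:char} and hence of Theorem~\ref{thm:obstructions}.
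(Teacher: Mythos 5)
Your outer skeleton matches the paper exactly: hole detection via Theorem~\ref{thm:nikolo}, a perfect elimination ordering via Theorem~\ref{thm:generate}, induction on a simplicial vertex $v$ with $G-v$ connected, the easy one-bag case, and the three-step scheme (find a maximal good boundary-crossing path, shrink it with Lemma~\ref{lem:shorten:path}, finish with Lemma~\ref{lem:base:nocrossing}). But the heart of the proof is precisely the part you defer with ``Granted this setup'': the paper needs three separate case lemmas (Lemmas~\ref{lem:charcase1}, \ref{lem:charcase2}, and~\ref{lem:charcase3}, distinguished by how $N_G(v)$ sits inside $C_1 \cup C_2$) because a wing $G_1$ satisfying the hypothesis of Lemma~\ref{lem:extendgood} does \emph{not} always exist. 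For instance, if $C_1 \subseteq N_G(v)$ and $N_G(v) \cap C_2 = \bdd{C_2}{C_1}$, or if $C_2 = \bdd{C_2}{C_1}$, there is no obstruction at all and the tree must instead be modified directly (add $v$ to $C_1$, or merge $C_1 \cup C_2$); the wing used in Lemma~\ref{lem:charcase1} requires vertices $w \in \bdd{C_2}{C_1} \setminus N_G(v)$ and $a \in C_2 \setminus \bdd{C_2}{C_1}$ whose existence must first be secured by exactly this preliminary case analysis. Similarly, Lemma~\ref{lem:charcase3} cannot be reduced to your Step~1 at all: there both boundaries may have crossing vertices, two half-obstructions $H_1, H_2$ from Lemma~\ref{lem:extendgood-pre} are glued together through $v$ to form $W_{s,t}$ or $O_4$, and the tree-modification step uses Claim~\ref{claim:case:2-2} (which exploits $N_G(v) = \bdd{C_1}{C_2} \cup \bdd{C_2}{C_1}$ to absorb $v$ into a new bag) rather than Lemma~\ref{lem:base:nocrossing}.

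Your concrete instantiation is also wrong as stated: you set $G_2 = G - v$ in Lemma~\ref{lem:extendgood}, but that lemma requires $G$ to be the disjoint union of $G_1$ and $G_2$ with \emph{all} edges added between $A \subseteq V(G_1)$ and a set $B$ contained in a single bag of $G_2$, and with $G[V(G_1) \cup \{b\}] \cong W^-_{s,t}$ for \emph{every} $b \in B$. With $G_1$ ``encoding $v$ and its neighborhood'' and $G_2 = G - v$, the two graphs are not disjoint and the complete-join condition fails. The paper instead first verifies by hand (Claims~\ref{claim:no}, \ref{claim:base:crossing}, \ref{claim:base2:crossing}) that the initial segment $C_3C_2C_1$ is good --- directly exhibiting $O_1$, $O_2$, or $O_3$ using $v$ itself, which Lemma~\ref{lem:extendgood-pre} cannot do since it operates away from $v$ --- and only then applies Lemma~\ref{lem:extendgood} with $G_1$ a small five-vertex wing (e.g.\ $G[\{v, z_1, z_2, w, a\}]$) and $G_2$ the component of $G - V(C_2)$ containing $C_3$, so that $B = \bdd{C_3}{C_2}$ lies in one bag. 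So the gap is genuine: the case analysis you skip is not routine bookkeeping but the bulk of Section~\ref{subsec:certifying}, and the one concrete choice you do make contradicts the hypotheses of the key lemma.
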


As explained in Subsection~\ref{subsec:partial},
	we mainly consider the case when $G$ is a connected chordal graph, $v$ is a simplicial vertex of $G$ and $G-v$ is a connected well-partitioned chordal graph with partition tree $\calT$, and $v$ has neighbors in two distinct bags $C_1$ and $C_2$.
	With these assumptions, 
 we deal with the following three cases, and in each case, we show that either one can in polynomial time find an obstruction or output a partition tree of $G$.
 \begin{itemize}
 	\item (Lemma~\ref{lem:charcase1}) $C_1\subseteq N_G(v)$.
	\item (Lemma~\ref{lem:charcase2}) $\bdd{C_1}{C_2} \setminus N_G(v) \neq \emptyset$ and  $C_2 \setminus N_G(v) \neq \emptyset$.
	\item (Lemma~\ref{lem:charcase3}) $C_1 \setminus N_G(v) \neq \emptyset$,  $C_2 \setminus N_G(v) \neq \emptyset$ and $N_G(v)=\bdd{C_1}{C_2}\cup \bdd{C_2}{C_1}$.
 \end{itemize}
 We give a proof of Proposition~\ref{prop:char} assuming that these lemmas hold.
\begin{proof}[Proof of Proposition~\ref{prop:char}]
	We apply Theorem~\ref{thm:nikolo} to find a hole in $G$ if one exists.  We may assume that $G$ is chordal.
	Since a graph is a well-partitioned chordal graph if and only if its connected components are well-partitioned chordal graphs, 
	it is sufficient to show it for each connected component. From now on, we assume that $G$ is connected.
	Using the algorithm in Theorem~\ref{thm:generate}, we can find a perfect elimination ordering $(v_1, v_2, \ldots, v_n)$ of $G$ in polynomial time.
	
	For each $i\in \{1, 2, \ldots, n \}$, let $G_i\defeq G[\{v_i, v_{i+1}, \ldots, v_n\}]$.
	Observe that since $G$ is connected and $v_i$ is simplicial in $G_i$ for all $1\le i\le n-1$, each $G_i$ is connected.
	From $i=n$ to $1$, we recursively find either an obstruction or a partition tree of $G_i$.	
	Clearly, $G_n$ admits a partition tree.
	Let $1\le i\le n-1$, and assume that we obtained a partition tree $\calT$ of $G_{i+1}$.
	Recall that $v_i$ is simplicial in $G_i$.

	Since $v_i$ is simplicial in $G_i$, $N_{G_i}(v_i)$ is a clique. 
	This implies that there are at most two bags in $V(\calT)$ that have a non-empty intersection with $N_{G_i}(v_i)$. 
	If there is only one such bag in $V(\calT)$, say $C$, we can construct a partition tree for $G_i$ 
	by simply adding a bag consisting of $v_i$ and making it adjacent to $C$. 
	
	Hence, from now on, we can assume that there are precisely two distinct adjacent bags $C_1, C_2 \in V(\calT)$ that have a non-empty intersection with $N_{G_i}(v_i)$.
	As $N_{G_i}(v_i)$ is a clique, we can observe that $N_{G_i}(v_i) \subseteq \bdd{C_1}{C_2} \cup \bdd{C_2}{C_1}$.

	If $C_1\subseteq N_{G_i}(v_i)$ or $C_2\subseteq N_{G_i}(v_i)$, then by Lemma~\ref{lem:charcase1}, 
	we can in polynomial time either output an obstruction or output a partition tree of $G_i$.
	Thus, we may assume that $C_1 \setminus N_{G_i}(v_i) \neq \emptyset$ and	 $C_2 \setminus N_{G_i}(v_i) \neq \emptyset$.
	If $\bdd{C_1}{C_2} \setminus N_{G_i}(v_i) \neq \emptyset$ or $\bdd{C_2}{C_1} \setminus N_{G_i}(v_i) \neq \emptyset$, 
	then by Lemma~\ref{lem:charcase2}, 
	we can in polynomial time either output an obstruction or output a partition tree of $G_i$.
	Thus, we may further assume that 
	$\bdd{C_1}{C_2} \setminus N_{G_i}(v_i)=\emptyset$ and $\bdd{C_2}{C_1} \setminus N_{G_i}(v_i)=\emptyset$.
	Then by Lemma~\ref{lem:charcase3}, 
	we can in polynomial time either output an obstruction or output a partition tree of $G_i$, and this concludes the proposition.
\end{proof}

Now, we focus on proving the three lemmas.

\begin{lemma}\label{lem:charcase1}
	If $C_1\subseteq N_G(v)$,  
	then one can in polynomial time either
	output an obstruction in $G$ or output a partition tree of $G$ confirming that $G$ is a well-partitioned chordal graph.
\end{lemma}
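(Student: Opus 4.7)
Let $D \defeq N_G(v) \cap C_2$. Since $N_G(v)$ is a clique containing $C_1\cup D$ and $D\subseteq C_2$, every vertex of $C_1$ is adjacent to every vertex of $D$, which forces $C_1=\bdd{C_1}{C_2}$ and $D\subseteq \bdd{C_2}{C_1}$. In particular $C_1\setminus\bdd{C_1}{C_2}=\emptyset$, so no vertex of $G-v$ can cross $\bdd{C_1}{C_2}$ in $C_1$, and the only obstacle to invoking \cref{lem:base:nocrossing} directly on $\calT$ is the possible existence of a vertex of $G-v$ that crosses $\bdd{C_2}{C_1}$ in $C_2$.

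I plan to follow the three-step strategy of \cref{subsec:partial}. Treating the edge $C_2C_1$ as the trivial two-bag boundary-crossing path, I would apply \cref{lem:extendgood}(2) to it. The auxiliary graph $G_1$ required by \cref{lem:extendgood} will be built around $v$: specifically, $V(G_1)$ consists of $v$ together with a couple of vertices from $C_1$ and, if $\bdd{C_2}{C_1}\setminus D\neq\emptyset$, also a witness vertex from $\bdd{C_2}{C_1}\setminus D$. This is arranged so that for every vertex $u$ in the designated set $B\subseteq C_1$, the induced subgraph $G[V(G_1)\cup\{u\}]$ is isomorphic to either $W^-_{1,0}$ or $W^-_{2,0}$ with terminal $u$; then any obstruction detected by \cref{lem:extendgood} along the boundary-crossing path, once glued to $G_1$ through $v$, becomes a full $W_{s,t}$-obstruction in $G$ that we output.

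If \cref{lem:extendgood}(2) does not output an obstruction, it returns a maximal good boundary-crossing path $C_kC_{k-1}\ldots C_2C_1$ in a possibly modified $\calT$. I would then iterate \cref{lem:shorten:path} $k-2$ times to obtain a partition tree $\calT^\star$ of $G-v$ in which no vertex crosses the updated boundary $\bdd{C_2^\star}{C_1}$ in $C_2^\star$ (where $C_2^\star$ is the bag corresponding to $C_2$ after shortening). Combined with the automatic non-crossing on the $C_1$-side, the hypotheses of \cref{lem:base:nocrossing} hold, and invoking it (with $v$ as the simplicial vertex) yields the desired partition tree of $G$.

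The principal difficulty I foresee is the choice of $G_1$ in the application of \cref{lem:extendgood}: it must be rich enough for the $W^-_{s',t'}$-condition to hold for every $u\in B$, yet small enough that $G_2=G-V(G_1)$ remains a connected well-partitioned chordal graph with a partition tree compatible with $\calT$. The assumption $C_1\subseteq N_G(v)$ is very favourable here, since $v$ together with any two vertices of $C_1$ already forms a triangle; the casework essentially reduces to whether $\bdd{C_2}{C_1}=D$ or $\bdd{C_2}{C_1}\supsetneq D$, which determines whether a $W^-_{1,0}$- or a $W^-_{2,0}$-shape is produced around $u$ and which $W_{s,t}\in\obswp$ is ultimately assembled.
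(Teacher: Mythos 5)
Your overall three-step skeleton (handle the trivial side, build a maximal good boundary-crossing path ending in $C_2C_1$, iterate \cref{lem:shorten:path}, finish with \cref{lem:base:nocrossing}) is the paper's skeleton, but the way you invoke \cref{lem:extendgood}(2) does not satisfy its hypotheses, and this is a genuine gap rather than a detail. The lemma requires that the graph under consideration decompose as the disjoint union of $G_1$ and $G_2$ with \emph{all} edges between $A \subseteq V(G_1)$ and $B \subseteq V(G_2)$ and \emph{no other} edges between the two sides, and $G_2$ must carry the partition tree in which the boundary-crossing path is grown. Since the crossings you need to eliminate are crossings of $\bdd{C_2}{C_1}$, the path must run through $C_2$ and beyond, so $C_2$ must lie in $G_2$. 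But your $G_1$ contains $v$ (adjacent to $D = N_G(v)\cap C_2$), vertices of $C_1$ (adjacent to all of $\bdd{C_2}{C_1}\setminus V(G_1)$, since $C_1 = \bdd{C_1}{C_2}$), and possibly $w \in \bdd{C_2}{C_1}\setminus D$ (adjacent to all of $C_2 \setminus V(G_1)$ and potentially to boundaries of further bags neighboring $C_2$); none of these neighbors lie in your attachment set $B \subseteq C_1$, so the complete-join condition is irreparably violated. Moreover, when $\bdd{C_2}{C_1} = D$ your construction cannot even start: $v$ together with any vertices of $C_1 \cup D$ spans a clique, never a diamond, so no $W^-_{s,t}$-anchor around $v$ exists --- and correctly so, because in that case no obstruction need exist even if crossing vertices abound: one simply adds $v$ to the bag $C_1$ (and if additionally $C_2 = \bdd{C_2}{C_1}$, one merges $C_1 \cup C_2$ into one bag and attaches $\{v\}$). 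Your proposal performs neither of these base-case tree surgeries.

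The paper's proof shows why the shortcut fails and what replaces it. It first disposes of the two base cases above, which guarantees the existence of $z_1 \in N_G(v)\cap C_1$, $z_2 \in N_G(v)\cap C_2$, $w \in \bdd{C_2}{C_1}\setminus N_G(v)$, and $a \in C_2 \setminus \bdd{C_2}{C_1}$. It then verifies the \emph{first} link $C_3C_2C_1$ of the path by a bespoke case analysis (\cref{claim:no} and \cref{claim:base:crossing}), outputting $O_1$, $O_2$, $O_3$, or $W_{1,0}$ directly; this step cannot be delegated to \cref{lem:extendgood}, because the clean decomposition only becomes available \emph{after} that claim establishes that $\bdd{C_3}{C_2}$ is complete to $C_2$. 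Only then is \cref{lem:extendgood}(2) applied --- with $B := \bdd{C_3}{C_2}$ (a subset of $C_3$, not of $C_1$), $G_1 := G[\{v, z_1, z_2, w, a\}]$, and $G_2$ the component of $G - V(C_2)$ containing $C_3$: deleting $V(C_2)$ genuinely separates the two sides, the verified completeness yields exactly the join between $\{z_2, w, a\}$ and $\bdd{C_3}{C_2}$, and each $p \in \bdd{C_3}{C_2}$ closes $V(G_1)$ into a $W^-_{2,0}$ with terminal $p$. Your concluding steps would then go through as you describe, but without the base cases and without the hand-verified first link your application of \cref{lem:extendgood} has no valid instantiation.
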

\begin{proof}
Since $v$ is a simplicial vertex, we have that $\bdd{C_1}{C_2} = C_1$.
If $N_G(v) \cap C_2 = \bdd{C_2}{C_1}$, then we can obtain a partition tree for $G$ by adding $v$ to $C_1$. 
	Thus, we may assume that $N_G(v) \cap C_2 \neq \bdd{C_2}{C_1}$. 
	
	Assume that $C_2 = \bdd{C_2}{C_1}$.
	Since $\bdd{C_2}{C_1}$ is complete to $C_1$, we have that $C_1 \cup C_2$ is a clique. Hence, we can obtain a partition tree $\calT'$ for $G$ from $\calT$ by removing $C_1$ and $C_2$, adding a new bag $C^* = C_1 \cup C_2$, making all neighbors of $C_1$ and $C_2$ in $\calT$ adjacent to $C^*$, and adding a new bag $C_v \defeq \{v\}$ and making $C_v$ adjacent to $C^*$.
	Thus, we may assume that $C_2 \setminus \bdd{C_2}{C_1} \neq \emptyset$.
	
	Since $C_1= \bdd{C_1}{C_2}$, no vertex of $G-v$ crosses $\bdd{C_1}{C_2}$.
	If no vertex of $G-v$ crosses $\bdd{C_2}{C_1}$,
	then by Lemma~\ref{lem:base:nocrossing}, we can obtain a partition tree for $G$ in polynomial time.
	Thus, we may assume that there is a bag $C_3$ having a vertex that crosses $\bdd{C_2}{C_1}$.
	So, $C_3C_2C_1$ is a boundary-crossing path.
	We will find either an obstruction or a maximal good boundary-crossing path ending in $C_3C_2C_1$.
	We first check that $C_3C_2 C_1$ is good, unless some obstruction from $\obswp$ appears.

	\begin{figure}
		\centering
			\scalebox{0.8}{
				\tikzset{
					bag/.style={
						shape=circle,
						draw,
						minimum width=2cm,
					}
				}
					\begin{tikzpicture}
						\begin{scope}[]
							\node[smallbag] (C1) at (0,0) {};
							\node[anchor = north west] at (C1.330) {$C_{1}$};
			
							\draw[bend right=45] (C1.165) to (C1.15);
			
							\node[vtx, label=below:{$z_1$}] (z1) at (C1.90) [below=.15cm] {};	
			
							\node[bag] (C2) at (C1.90) [above=.5cm] {};	
							\node[anchor = south west] at (C2.30) {$C_2$};
			
							\draw[bend left=45] (C2.195) to (C2.345);	
			
							\node[vtx] (w) at (C2.240) [above=.25cm] {};
							\node[] at ($(w)+(-0.2,0.2)$) {$w$};	
			
							\node[vtx] (z2) at (C2.300) [above=.25cm] {};
							\node[] at ($(z2)+(0.2,0.2)$) {$z_2$};
							\node[vtx, label=above:{$a$}] (a) at (C2.90) [below=.5cm] {};
			
							\node[vtx, label=right:{$v$}] (v) at ($(C1) !0.45! (C2)$) [right = 1.25cm] {}; 
			
							\draw[edge] (v) to (z1);
							\draw[edge] (v) to (z2);
							\draw[edge] (z1) to (z2);
							\draw[edge] (z1) to (w);
							\draw[edge] (w) to (a);
							\draw[edge] (z2) to (a);
							\draw[edge] (w) to (z2);	
		
						\end{scope}
		
						\begin{scope}[xshift=3.5cm,yshift=1cm]
							\node[vtx, label=left:{$z_2$}] (z2) at (0,0) {};
							\node[vtx, label=left:{$a$}, above of = z2] (a) {};
							\node[vtx, label=left:{$v$}, below of = z2] (v) {};
								
							\node[vtx, label=right:{$z_1$}, right of = v] (z1) {};
							\node[vtx, label=right:{$w$}, right of = z2] (w) {};
							\node[vtx, label=right:{$x$}, right of = a] (x) {};
				
							\draw (v) -- (z1);
							\draw (v) -- (z2);
							\draw (z1) -- (z2);
							\draw (w) -- (z2);
							\draw (a) -- (z2);
							\draw (x) -- (a);
							\draw (x) -- (w);
							\draw (a) -- (w);
							\draw (z1) -- (w);
						\end{scope}
		
						\begin{scope}[xshift=7cm, yshift=1cm]
							\node[vtx, label=left:{$z_2$}] (z2) at (0,0) {};
							\node[vtx, label=left:{$x$}, above of = z2] (x) {};
							\node[vtx, label=left:{$v$}, below of = z2] (v) {};
								
							\node[vtx, label=right:{$z_1$}, right of = v] (z1) {};
							\node[vtx, label=right:{$w$}, right of = z2] (w) {};
							\node[vtx, label=right:{$a$}, right of = x] (a) {};
				
							\draw (v) -- (z1);
							\draw (v) -- (z2);
							\draw (z1) -- (z2);
							\draw (w) -- (z2);
							\draw (a) -- (z2);
							\draw (x) -- (a);
							\draw (x) -- (z2);
							\draw (a) -- (w);
							\draw (z1) -- (w);
						\end{scope}
		
						\begin{scope}[xshift=10.5cm, yshift=1cm]
							\node[vtx, label=left:{$z_2$}] (z2) at (0,0) {};
							\node[vtx, label=left:{$v$}, below of = z2] (v) {};
							\node[vtx, label=left:{$x$}, above of = z2] (x) {};
								
							\node[vtx, label=right:{$z_1$}, right of = v] (z1) {};
							\node[vtx, label=right:{$w$}, right of = z2] (w) {};
							\node[vtx, label=above:{$a$}, above of = w] (a) {};
							\node[vtx, label=right:{$y$}, right of = a] (y) {};
				
							\draw (v) -- (z1);
							\draw (v) -- (z2);
							\draw (z1) -- (z2);
							\draw (w) -- (z2);
							\draw (a) -- (z2);
							\draw (a) -- (w);
							\draw (z1) -- (w);
							\draw (x) -- (a);
							\draw (x) -- (w);
							\draw (x) -- (z2);
							\draw (y) -- (a);
							\draw (y) -- (w);
							\draw (y) -- (z2);
						\end{scope}
					\end{tikzpicture}
			}
			\caption{Proof of Claim~\ref{claim:no}.}
			\label{fig:claim:no}
		\end{figure}
	
	\begin{nestedclaim}\label{claim:no}
		Let $z_1\in N_G(v)\cap C_1$, $z_2\in N_G(v)\cap C_2$, $w\in  \bdd{C_2}{C_1}\setminus N_G(v)$, and $a\in C_2\setminus \bdd{C_2}{C_1}$.
		\begin{enumerate}[label={(\roman*)}]
			\item\label{claim:no:x:aw} If there is a vertex $x \in V(G) \setminus \{v, a, w, z_1, z_2\}$ such that~$N(x) \cap \{v, a, w, z_1, z_2\} = \{a, w\}$, 
			then $G[\{v,a,w,z_1,z_2,x\}]$ is isomorphic to \ABar{}.
			\item\label{claim:no:x:az2} If there is a vertex $x \in V(G) \setminus \{v, a, w, z_1, z_2\}$ such that~$N(x) \cap \{v, a, w, z_1, z_2\} = \{a, z_2\}$, 
			then $G[\{v,a,w,z_1,z_2,x\}]$ is isomorphic to \FourFan{}.
			\item\label{claim:no:xy:awz2} If there is a pair of distinct non-adjacent vertices $x, y \in V(G) \setminus \{v, a, w, z_1, z_2\}$ 
		such that $N(x) \cap \{v, a, w, z_1, z_2\} = N(y) \cap \{v, a, w, z_1, z_2\} = \{a, w, z_2\}$, then 
		$G[\{v,a,w,z_1,z_2,x,y\}]$ is isomorphic to \AhnOne{}.
		\end{enumerate}
	\end{nestedclaim}
	\begin{claimproof}
	It is straightforward to check it; see Figure~\ref{fig:claim:no}.
	\end{claimproof}
	
	\begin{nestedclaim}\label{claim:base:crossing}
		One can in polynomial time output an obstruction 
		or verify that $C_3C_2C_1$ is good.
	\end{nestedclaim}
	\begin{claimproof}
		We consider the bag $C_3$, and first check whether $\bdd{C_3}{C_2}$ is complete to $C_2$.
		If so, then we are done.
		Otherwise, choose a vertex $p \in \bdd{C_3}{C_2}$,
		and a non-neighbor $q$ of $p$ in $C_2$. 
		As $p$ crosses $\bdd{C_2}{C_1}$, 
		$p$ has a neighbor $a$ in $C_2\setminus \bdd{C_2}{C_1}$ and a neighbor $b$ in $\bdd{C_2}{C_1}$.
		There are three possibilities; $q$ is contained in one of $N_G(v)\cap C_2$, $\bdd{C_2}{C_1}\setminus N_G(v)$, or $C_2\setminus \bdd{C_2}{C_1}$.
		Let $z_1\in N_G(v)\cap C_1$.
		
		If $q$ and $b$ are in distinct parts of $N_G(v)\cap C_1$ and $\bdd{C_2}{C_1}\setminus N_G(v)$, 
		then $G[\{p, q, z_1, a, b, v\}]$ is isomorphic to \ABar{} or \FourFan{} by Claims~\ref{claim:no}\ref{claim:no:x:aw} and \ref{claim:no:x:az2}.
		Assume $q$ and $b$ are in the same part of $N_G(v)\cap C_1$ or $\bdd{C_2}{C_1}\setminus N_G(v)$.
		Then by the previous argument, we may assume that $p$ is complete to the set, one of $N_G(v)\cap C_1$ and $\bdd{C_2}{C_1}\setminus N_G(v)$,
		that does not contain $q$.
		Then by choosing a vertex in this set, we can again output \ABar{} or \FourFan{}.
		Thus, we may assume that $q$ is contained in $C_2\setminus \bdd{C_2}{C_1}$ and $p$ is complete to $\bdd{C_2}{C_1}$.
		Then $q\neq a$ and by using vertices from $N_G(v)\cap C_1$ and $\bdd{C_2}{C_1}\setminus N_G(v)$ together with $\{a,p,q,v,z_1\}$, we can output \AhnOne{} by Claim~\ref{claim:no}\ref{claim:no:xy:awz2}.
		
		To verify whether $C_3 C_2 C_1$ is exclusive, we check if there exists another neighbor bag $D\neq C_3$ of $C_2$ having a vertex $q$ that crosses $\bdd{C_2}{C_1}$.
		If there is such a vertex $q$, then by applying the previous procedure, we may assume that $q$ is complete to $C_2$. 
		Then by using vertices from each of $N_G(v)\cap C_2$, $\bdd{C_2}{C_1}\setminus N_G(v)$, 
		and $C_2\setminus \bdd{C_2}{C_1}$ together with $\{p,q,z_1,v\}$, we can output \AhnOne{} by Claim~\ref{claim:no}\ref{claim:no:xy:awz2}.
		Otherwise, $C_3C_2C_1$ is a good boundary-crossing path.
	\end{claimproof}

	By Claim~\ref{claim:base:crossing}, 
	we may assume that $C_3C_2C_1$ is good.
	If no bag contains a vertex crossing $\bdd{C_3}{C_2}$, 
	then $C_3C_2C_1$ is a maximal good boundary-crossing path. So, we may assume that 
	there is a bag $C_4$ containing a vertex crossing $\bdd{C_3}{C_2}$.
	
	We choose
	$z_1\in N_G(v)\cap C_1$, $z_2\in N_G(v)\cap C_2$, $w\in  \bdd{C_2}{C_1}\setminus N_G(v)$, and $a\in C_2\setminus \bdd{C_2}{C_1}$.
	To apply Lemma~\ref{lem:extendgood}, 
	let $G_1=G[\{v,z_1,z_2,w,a\}]$ and $G_2$ be the component of $G-V(C_2)$ that contains $C_3$ and 
	$G'=G[V(G_1)\cup V(G_2)]$.
	It is clear that $G'$ can be obtained from the disjoint union of $G_1$ and $G_2$ by adding edges between 
	$\bdd{C_3}{C_2}$ and $\{w,a,z_2\}$. 
	Also, for each vertex $p\in \bdd{C_3}{C_2}$, $\{p\}\cup V(G_1)$ is a wing of $W_{2,0}$ with terminal $p$.
	
	Thus, by (2) of Lemma~\ref{lem:extendgood}, we can in polynomial time either output an obstruction, 
	or find a maximal good boundary-crossing path ending in $C_4C_3$ in $G_2$ such that 
	$\bdd{C_4}{C_3}$ is complete to $C_3$ and no other bag contains a vertex crossing $C_3$.
	Thus, in the latter case, we obtain a maximal boundary-crossing path ending in $C_2C_1$ in $G-v$.
	We now repeatedly apply Lemma~\ref{lem:shorten:path} to modify $\calT$ along this path and obtain a partition tree $\calT'$ for $G-v$ such that no vertex crosses $\bdd{C_2}{C_1}$. Note that, for simplicity, we call again $C_1$ and $C_2$ the bags of $\calT'$ containing the neighbors of $v$. We can now apply Lemma~\ref{lem:base:nocrossing} to obtain a partition tree for the entire graph $G$ in polynomial time.
\end{proof}

\begin{lemma}\label{lem:charcase2}
		If $\bdd{C_1}{C_2} \setminus N(v) \neq \emptyset$ and  $C_2 \setminus N(v) \neq \emptyset$,  
	then one can in polynomial time either
	output an obstruction in $G$ or output a partition tree of $G$ confirming that $G$ is a well-partitioned chordal graph.
\end{lemma}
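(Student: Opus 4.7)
The plan is to mirror the strategy of Lemma~\ref{lem:charcase1}: build a small \emph{gadget} near $v$ that plays the role of a wing, use it with Lemma~\ref{lem:extendgood} to either find an obstruction or extend $C_2C_1$ (and symmetrically $C_1C_2$) into a maximal good boundary-crossing path, then apply Lemma~\ref{lem:shorten:path} iteratively to kill all crossings of the two boundaries $\bdd{C_1}{C_2}$ and $\bdd{C_2}{C_1}$, and finally invoke Lemma~\ref{lem:base:nocrossing} to add $v$ as its own bag. The key difference from Lemma~\ref{lem:charcase1} is that we no longer have $\bdd{C_1}{C_2}=C_1$, so crossings can occur on \emph{both} sides of the edge $C_1C_2$ of~$\calT$, and the shortening procedure has to be performed twice.

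First, I would fix reference vertices $z_1\in N(v)\cap C_1$, $z_2\in N(v)\cap C_2$ and $b\in \bdd{C_1}{C_2}\setminus N(v)$. Since $N(v)$ is a clique we automatically have $z_1\in\bdd{C_1}{C_2}$ and $z_2\in\bdd{C_2}{C_1}$, so $\{v,z_1,z_2,b\}$ induces a diamond whose only non-edge is $vb$. Picking a non-neighbour $a$ of $v$ in $C_2$ and splitting on whether $a\in\bdd{C_2}{C_1}$ or $a\in C_2\setminus\bdd{C_2}{C_1}$, I would check that $G[\{v,z_1,z_2,b,a\}]$ is isomorphic to $W_{2,0}^-$ or $W_{1,0}^-$ respectively, with terminal $z_1$; moreover, for every $p\in \bdd{C_1}{C_2}\setminus\{z_1\}$ (in particular for every vertex of the bag ``across'' from $C_2$), $G[\{v,z_1,z_2,b,a,p\}]$ is of the same type with $p$ as terminal. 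This is exactly the hypothesis required to feed the pair $(G_1,G_2)$ into Lemma~\ref{lem:extendgood}, where $G_1=G[\{v,z_1,z_2,b,a\}]$.

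Second, if some bag $C_3\ne C_1$ contains a vertex crossing $\bdd{C_2}{C_1}$, I would take $G_2$ to be the component of $G-V(C_2)$ that contains $C_3$ and invoke Lemma~\ref{lem:extendgood}(2): this returns either an obstruction in~$G$, or a maximal good boundary-crossing path ending at $C_2C_1$ in $G-v$. In the latter case, repeated application of Lemma~\ref{lem:shorten:path} along this path transforms $\calT$ into a partition tree $\calT_1$ of $G-v$ in which no vertex crosses $\bdd{C_2}{C_1}$. I would then perform the symmetric procedure on the $C_1$-side of $\calT_1$, using a gadget built from $v$, $z_2$, $b$ (now in the role of $w$), and an additional non-neighbour of $v$ chosen inside $C_1$ (which exists because we are past the setting of Lemma~\ref{lem:charcase1}). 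Another call to Lemma~\ref{lem:extendgood}(2) and further applications of Lemma~\ref{lem:shorten:path} produce a partition tree $\calT_2$ of $G-v$ with no vertex crossing either boundary, so Lemma~\ref{lem:base:nocrossing} then finishes the job by producing a partition tree for $G$.

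The main obstacle will be the second round. Unlike the first round, the assumption only guarantees a non-neighbour of $v$ in $\bdd{C_1}{C_2}$; on the $C_1$ side we have a non-neighbour of $v$ inside $C_1$, but it might or might not lie in $\bdd{C_1}{C_2}$, and $\bdd{C_2}{C_1}\setminus N(v)$ or $C_2\setminus\bdd{C_2}{C_1}$ might be empty. Each of these configurations has to be handled separately with a gadget tailored to the given set of non-neighbours of $v$, in the spirit of Claim~\ref{claim:no} and Claim~\ref{claim:base:crossing} from the proof of Lemma~\ref{lem:charcase1}. An additional subtlety is that the tree modification performed by Lemma~\ref{lem:shorten:path} in the first round changes which vertices sit in the bags $C_1$ and $C_2$, so one must verify that the vertex~$b$ and the gadget of the second round still live in the ``right'' adjacent bags of the modified tree; this follows because Lemma~\ref{lem:shorten:path} only moves vertices along the path $C_k\cdots C_1$ used for shortening, and the $C_1$-side boundary is untouched.
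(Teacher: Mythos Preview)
Your high-level plan coincides with the paper's: manufacture a wing-shaped gadget at $v$, feed it into Lemma~\ref{lem:extendgood} to grow $C_2C_1$ (and later $C_1C_2$) into a maximal good boundary-crossing path, collapse that path via Lemma~\ref{lem:shorten:path}, and finish with Lemma~\ref{lem:base:nocrossing}. The execution, however, has concrete errors.

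First, the gadget identification is wrong on the face of it: $G[\{v,z_1,z_2,b,a\}]$ has five vertices, whereas $W_{1,0}^-$ has four and $W_{2,0}^-$ has six, so it is isomorphic to neither. The actual base gadget is the four-vertex diamond on $\{v,z_1,z_2,b\}$ (your $b$ is the paper's $x$), which is $W_{1,0}^-$ with terminal $z_2$, not $z_1$. More seriously, there is a direction mismatch: you declare the terminal to lie in $\bdd{C_1}{C_2}$ and quantify over $p\in\bdd{C_1}{C_2}$, but then immediately use the gadget to treat a bag $C_3$ that crosses $\bdd{C_2}{C_1}$ and take $G_2$ to be the component of $G-V(C_2)$ containing $C_3$. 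For Lemma~\ref{lem:extendgood} to apply there, the set $B$ must live in a bag of $G_2$ (hence on the $C_3$-side of $C_2$), and the terminal of $G_1\cup\{p\}$ must be $p\in B$; a terminal in $C_1$ is useless for this direction.

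Even with the gadget and direction fixed, you cannot invoke Lemma~\ref{lem:extendgood} directly at $C_3$. Its hypothesis demands that \emph{every} $p\in B$ (here $B=\bdd{C_3}{C_2}$) extends $G_1$ to a $W_{s,t}^-$ with terminal $p$; for this you need each such $p$ adjacent to all the anchor vertices you picked in $C_2$, i.e.\ $\bdd{C_3}{C_2}$ complete to $C_2$, which is precisely part of ``$C_3C_2C_1$ is good''. The paper therefore first establishes goodness of $C_3C_2C_1$ by an explicit case analysis (Claim~\ref{claim:base2:crossing}), producing one of $O_1$, $O_2$, $O_3$, or $W_{1,0}$ when it fails; only then, with some $a\in C_2\setminus\bdd{C_2}{C_1}$ in hand, does $\{v,x,z_1,z_2,a,p\}$ become $W_{1,1}^-$ with terminal $p$ for every $p\in\bdd{C_3}{C_2}$, and Lemma~\ref{lem:extendgood} is applied from $C_4$ onward. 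You correctly sense that casework ``in the spirit of Claim~\ref{claim:base:crossing}'' is required, but you locate it only in the second round; it is already indispensable in the first. The paper also makes an initial split on whether $\bdd{C_1}{C_2}=C_1$, since in that subcase no vertex can cross $\bdd{C_1}{C_2}$ and only one side needs work.
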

\begin{proof}
	We choose
 	 a neighbor $z_1$ of $v$ in $\bdd{C_1}{C_2}$, a neighbor $z_2$ of $v$ in $\bdd{C_2}{C_1}$ and 
	a non-neighbor $x$ of $v$ in $\bdd{C_1}{C_2}$.
	We first consider the case when $\bdd{C_1}{C_2} = C_1$.

	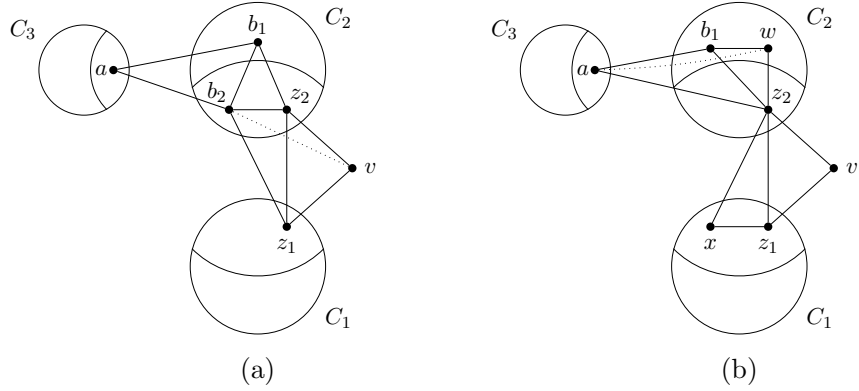
\begin{figure}
		\centering
		\scalebox{0.8}{
			\begin{tikzpicture}
				\begin{scope}[]
					\node[bag] (C2) at (0,0) {};
					\node[anchor = south west] at (C2.30) {$C_2$};
					\node[figlabel] at ($(C2)+(0,-5)$) {(a)};
	
					\draw[bend left=45] (C2.195) to (C2.345);
	
					\node[bag] (C1) at (C2.270) [below=1cm] {};
					\node[anchor = north west] at (C1.330) {$C_1$};
	
					\draw[bend right=45] (C1.165) to (C1.15);
	
					\node[vtx, label=below:{$z_1$}] (z1) at (C1.65) [below=.3cm] {};
	
					\node[vtx] (z2) at (C2.295) [above=.3cm] {};
					\node[] at ($(z2)+(0.225,0.225)$) {$z_2$};
	
					\node[vtx, label=right:{$v$}] (v) at ($(C1) !0.5! (C2)$) [right=1.5cm] {};
	
					\draw[edge] (z1) to (z2);
					\draw[edge] (v) to (z1);
					\draw[edge] (v) to (z2);
	
					\node[smallbag] (C3) at (C2.180) [left=1cm] {};
					\node[anchor = south east] at (C3.150) {$C_3$};
	
					\draw[bend left=45] (C3.300) to (C3.60);
	
					\node[vtx] (a) at (C3.0) [left=.2cm] {};
					\node[] at ($(a)+(-0.2,0)$) {$a$};
		
					\node[vtx] (b1) at (C2.90) [below=.6cm] {};
					\node[] at ($(b1)+(0,0.3)$) {$b_1$};
					\node[vtx] (b2) at (C2.245) [above=.3cm] {};
					\node[] at ($(b2)+(-0.2,0.3)$) {$b_2$};
		
					\draw[edge] (a) to (b1);
					\draw[edge] (a) to (b2);
					\draw[edge] (b1) to (z2);
					\draw[edge] (b2) to (z2);
					\draw[edge] (b1) to (b2);
					\draw[edge] (b2) to (z1);
		
					\draw[nonedge] (v) to (b2);
				\end{scope}
	
				\begin{scope}[xshift=8cm]
					\node[bag] (C2) at (0,0) {};
					\node[anchor = south west] at (C2.30) {$C_2$};
					\node[figlabel] at ($(C2)+(0,-5)$) {(b)};
	
					\draw[bend left=45] (C2.195) to (C2.345);
	
					\node[bag] (C1) at (C2.270) [below=1cm] {};
					\node[anchor = north west] at (C1.330) {$C_1$};
	
					\draw[bend right=45] (C1.165) to (C1.15);
	
					\node[vtx, label=below:{$z_1$}] (z1) at (C1.65) [below=.3cm] {};
					\node[vtx, label=below:{$x$}] (x) at (C1.115) [below=.3cm] {};
	
					\node[vtx] (z2) at (C2.295) [above=.3cm] {};
					\node[] at ($(z2)+(0.225,0.225)$) {$z_2$};
	
					\node[vtx, label=right:{$v$}] (v) at ($(C1) !0.5! (C2)$) [right=1.5cm] {};
	
					\draw[edge] (x) to (z1);
					\draw[edge] (z1) to (z2);
					\draw[edge] (x) to (z2);
					\draw[edge] (v) to (z1);
					\draw[edge] (v) to (z2);
	
					\node[smallbag] (C3) at (C2.180) [left=1cm] {};
					\node[anchor = south east] at (C3.150) {$C_3$};
	
					\draw[bend left=45] (C3.300) to (C3.60);
	
					\node[vtx] (a) at (C3.0) [left=.2cm] {};
					\node[] at ($(a)+(-0.2,0)$) {$a$};
		
					\node[vtx] (b1) at (C2.115) [below=.6cm] {};
					\node[] at ($(b1)+(0,0.3)$) {$b_1$};
		
					\draw[edge] (a) to (b1);
					\draw[edge] (b1) to (z2);
					\draw[edge] (a) to (z2);
		
					\node[vtx] (w) at (C2.65) [below=.6cm] {};
					\node[] at ($(w)+(0,0.25)$) {$w$};
		
					\draw[edge] (w) to (b1);
					\draw[edge] (w) to (z2);
		
					\draw[nonedge, bend right=5] (a) to (w);		
		
				\end{scope}
			\end{tikzpicture}
		}
		\caption{Illustration of some obstructions appearing in the proof of Claim~\ref{claim:base2:crossing}.}
		\label{fig:case2-base}
	\end{figure}
	\begin{case}[$\bdd{C_1}{C_2} = C_1$]\label{case:big:2} 
	Note that no vertex in $G-v$ crosses $\bdd{C_1}{C_2}$. If no vertex in $G-v$ crosses $\bdd{C_2}{C_1}$, then we can obtain a partition tree of $G$ from $\calT$ by Lemma~\ref{lem:base:nocrossing}. We may assume that there is a bag $C_3$ containing a vertex $a$ that crosses $\bdd{C_2}{C_1}$. 
	
	\begin{nestedclaim}\label{claim:base2:crossing}
	One can in polynomial time output an obstruction 
		or verify that $C_3C_2C_1$ is good.
	\end{nestedclaim}
	\begin{claimproof}
		As $a$ crosses $\bdd{C_2}{C_1}$, 
		$a$ has a neighbor both in $C_2 \setminus \bdd{C_2}{C_1}$ and in $\bdd{C_2}{C_1}$.
		Let $b_1$ and $b_2$ be neighbors of $a$ in $C_2 \setminus \bdd{C_2}{C_1}$ and $\bdd{C_2}{C_1}$, respectively.
		
		Assume that $a$ and $v$ have no common neighbors.
		Then $b_2$ is not adjacent to $v$ and $z_2$ is not adjacent to $a$.
		So, $G[\{a, b_1, b_2, z_2, z_1, v\}]$ is isomorphic to \ABar{}, see Figure~\ref{fig:case2-base}(a).
		Thus, we may assume that $a$ and $v$ have the common neighbor in $\bdd{C_2}{C_1}$. 
		We assume that $z_2$ is a common neighbor.

		Now suppose that there is a vertex $w \in C_2 \setminus \bdd{C_2}{C_1}$ that is not adjacent to $a$. Recall that since $N(v) \cap C_2 \subseteq \bdd{C_2}{C_1}$, we have that $v$ is not adjacent to $w$. Thus, we can output a $W_{1, 0}$ on $\{v, x, z_1, z_2, a, b_1, w\}$, see Figure~\ref{fig:case2-base}(b).
		So, we may assume that $a$ is complete to $C_2 \setminus \bdd{C_2}{C_1}$.
		
		Assume that there is a vertex $w \in \bdd{C_2}{C_1}$ that is not adjacent to $a$. Note that $v$ may or may not be adjacent to $w$. If $v$ is adjacent to~$w$, then $G$ contains \AhnOne{} as an induced subgraph, and if $v$ is not adjacent to $w$, then $G$ contains \FourFan{} as an induced subgraph; both these cases are illustrated in Figure~\ref{fig:case2-base2}.
		Otherwise, we can conclude that $a$ is complete to~$C_2$.
		
		To check whether $C_3C_2C_1$ is exclusive,  
		we find a bag $D\neq C_3$ containing a vertex $w$ crossing $\bdd{C_2}{C_1}$. 
		If there is no such a vertex, then it is exclusive. Assume that such a vertex $w$ exists.
		By repeating the above argument, we may assume that $w$ is complete to $C_2$.
	Then, $G[\{v, z_1, z_2, x, a, b_1, w\}]$ is isomorphic to $W_{1, 0}$ (see Figure~\ref{fig:case2-base}(b), but note that in this case $w\notin C_2$).
	\end{claimproof}
	\begin{figure}
		\centering
		\scalebox{0.8}{
			\begin{tikzpicture}
				\begin{scope}[]
					\node[bag] (C2) at (0,0) {};
					\node[anchor = south west] at (C2.30) {$C_2$};
	
					\draw[bend left=45] (C2.195) to (C2.345);
	
					\node[bag] (C1) at (C2.270) [below=1cm] {};
					\node[anchor = north west] at (C1.330) {$C_1$};
	
					\draw[bend right=45] (C1.165) to (C1.15);
	
					\node[vtx] (z1) at (C1.75) [below=.3cm] {};
					\node[] at ($(z1)+(0.225,-0.225)$) {$z_1$};		
		
					\node[vtx] (x) at (C1.105) [below=.3cm] {};
					\node[] at ($(x)+(-0.2,-0.2)$) {$x$};
	
					\node[vtx] (z2) at (C2.285) [above=.6cm] {};
					\node[] at ($(z2)+(0.2,0.2)$) {$z_2$};
	
					\node[vtx, label=right:{$v$}] (v) at ($(C1) !0.5! (C2)$) [right=1.5cm] {};
	
					\draw[edge] (x) to (z1);
					\draw[edge] (z1) to (z2);
					\draw[edge] (x) to (z2);
					\draw[edge] (v) to (z1);
					\draw[edge] (v) to (z2);
	
					\node[bag] (C3) at (C2.180) [left=1cm] {};
					\node[anchor = south east] at (C3.150) {$C_3$};
	
					\draw[bend left=45] (C3.300) to (C3.60);
	
					\node[vtx, label=left:{$a$}] (a) at (C3.0) [left=.2cm] {};
		
					\node[vtx] (b1) at (C2.115) [below=.6cm] {};
					\node[] at ($(b1)+(0,0.3)$) {$b_1$};
		
					\draw[edge] (a) to (b1);
					\draw[edge] (b1) to (z2);
					\draw[edge] (a) to (z2);
		
					\node[vtx] (ww) at (C2.255) [above = .4cm] {};
					\node[scale=0.9] at ($(ww)+(-0.225,-0.175)$) {$w$};
		
					\draw[nonedge] (a) to (ww);
		
					\draw[edge] (ww) to (b1);
					\draw[edge] (ww) to (z2);
					\draw[edge] (ww) to (x);
					\draw[edge] (ww) to (z1);
					\draw[edge, highlighta] (ww) to (v);
				\end{scope}
	
				\begin{scope}[xshift=3cm, yshift=-1.6cm, node distance=1.25cm]
					\node[vtx, label=below:{$x$}] (x) at (0,0) {};
					\node[vtx, right of = x, label=below:{$z_1$}] (z1) {};
					\node[vtx, label=left:{$w$}] (w) at ($(x) !0.5! (z1)$) [above=1cm] {};
					\node[vtx, right of = w, label=right:{$z_2$}] (z2) {};		
		
					\node[vtx, right of = z1, label=below:{$v$}] (v) {};
					\node[vtx, label=above:{$b_1$}] (b1) at ($(w) !0.5! (z2)$) [above=1cm] {};
					\node[vtx, right of = b1, label=above:{$a$}] (a) {};
		
					\draw[edge] (x) to (w);
					\draw[edge] (x) to (z2);
					\draw[edge] (x) to (z1);
					\draw[edge] (z1) to (w);
					\draw[edge] (z1) to (z2);
					\draw[edge] (z1) to (v);
					\draw[edge] (v) to (w);
					\draw[edge] (v) to (z2);
					\draw[edge] (w) to (z2);
					\draw[edge] (w) to (b1);
					\draw[edge] (z2) to (b1);
					\draw[edge] (z2) to (a);
					\draw[edge] (a) to (b1);
				\end{scope}
	
				\begin{scope}[xshift=7.5cm, yshift=-1.6cm, node distance=1.1cm]
					\node[vtx, label=left:{$v$}] (v) at (0,0) {};
					\node[vtx, right of = v, label=right:{$z_1$}] (z1) {};
					\node[vtx, above of = v, label=left:{$z_2$}] (z2) {};
					\node[vtx, above of = z1, label=right:{$w$}] (w) {};
					\node[vtx, above of = z2, label=left:{$a$}] (a) {};
					\node[vtx, above of = w, label=right:{$b_1$}] (b1) {};
		
					\draw[edge] (v) to (z1);
					\draw[edge] (v) to (z2);
					\draw[edge] (z1) to (z2);
					\draw[edge] (z1) to (w);
					\draw[edge] (z2) to (w);
					\draw[edge] (z2) to (a);
					\draw[edge] (z2) to (b1);
					\draw[edge] (w) to (b1);
					\draw[edge] (a) to (b1);
				\end{scope}
			\end{tikzpicture}
		}
		\caption{Illustration of some more obstructions appearing in the proof of Claim~\ref{claim:base2:crossing}. Note that the edge between $v$ and $w$ may or may not be present, depending on which we either have an \AhnOne{} or an \FourFan{} as an induced subgraph in $G$.}
		\label{fig:case2-base2}
	\end{figure}
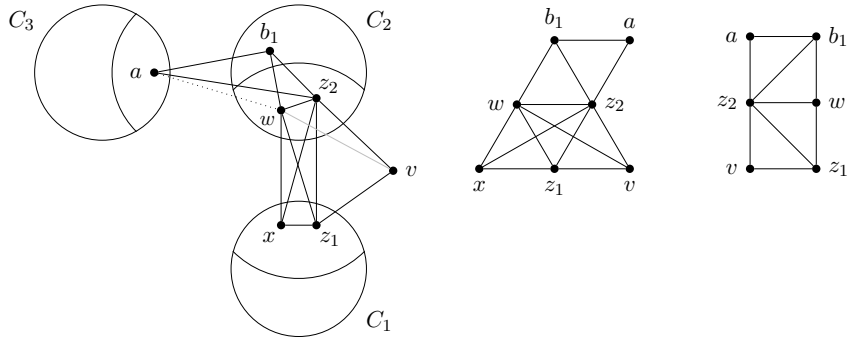	

	By Claim~\ref{claim:base2:crossing}, we may assume that $C_3C_2C_1$ is good. Let $a\in C_2\setminus \bdd{C_2}{C_1}$.
	If no bag contains a vertex crossing $\bdd{C_3}{C_2}$, 
	then $C_3C_2C_1$ is a maximal good boundary-crossing path. So, we may assume that
	there is a bag $C_4$ containing a vertex crossing $\bdd{C_3}{C_2}$. 
	
	To apply Lemma~\ref{lem:extendgood}, 
	let $G_1=G[\{v,x, z_1, z_2, a\}]$ and $G_2$ be the component of $G-V(C_2)$ containing $C_3$ and 
	$G'=G[V(G_1)\cup V(G_2)]$.
	It is clear that $G'$ can be obtained from the disjoint union of $G_1$ and $G_2$ by adding edges between 
	$\bdd{C_3}{C_2}$ and $\{a,z_2\}$. 
	Observe that for each vertex $p\in \bdd{C_3}{C_2}$, $G[\{p, a, v, z_1, z_2, z\}]$ is isomorphic to $W^-_{1,1}$ with terminal $p$.
	
	By (2) of Lemma~\ref{lem:extendgood}, we can in polynomial time either output an obstruction, 
	or find a maximal good boundary-crossing path ending in $C_4C_3$ in $G_2$ such that 
		$\bdd{C_4}{C_3}$ is complete to $C_3$ and no other bag contains a vertex crossing $C_3$.
Thus, in the latter case, we obtain a maximal boundary-crossing path ending in $C_2C_1$ in $G-v$.
	We can now repeatedly apply Lemma~\ref{lem:shorten:path} to modify $\calT$ along this path and obtain a partition tree $\calT'$ for $G-v$ such that no vertex crosses $\bdd{C_2}{C_1}$. Note that, for simplicity, we call again $C_1$ and $C_2$ the bags of $\calT'$ containing the neighbors of $v$. We can now apply Lemma~\ref{lem:base:nocrossing} to obtain a partition tree for the entire graph $G$ in polynomial time.
	\end{case}
	
		\begin{case}[$C_1 \setminus \bdd{C_1}{C_2} \neq \emptyset$]
		If there is no vertex crossing $\bdd{C_1}{C_2}$ and no vertex crossing $\bdd{C_2}{C_1}$ in $G-v$, then by Lemma~\ref{lem:base:nocrossing}, 
		one can output a partition tree of $G$ from $\calT$ in polynomial time. 
	Recall that we have neighbors of $v$, namely $z_1 \in \bdd{C_1}{C_2}$ and $z_2 \in \bdd{C_2}{C_1}$, and a non-neighbor of $v$, namely $x \in \bdd{C_1}{C_2}$.	
		
		\begin{nestedclaim}
			If there is a vertex crossing $\bdd{C_1}{C_2}$ or $\bdd{C_2}{C_1}$, then one can in polynomial time output an obstruction or output a partition tree of $G$ from $\calT$ confirming that $G$ is a well-partitioned chordal graph. 
		\end{nestedclaim}
		\begin{claimproof}
			First we consider the case in which only $\bdd{C_1}{C_2}$ has a crossing vertex.
			Let $a$ be a vertex in a bag $C_3 \in V(\calT) \setminus \{C_1, C_2\}$ that crosses $\bdd{C_1}{C_2}$. Let $b \in C_1 \setminus \bdd{C_1}{C_2}$ be a neighbor of $a$. Note that a neighbor of $a$ in $\bdd{C_1}{C_2}$ is either adjacent to $v$, as~$z_1$, or non-adjacent to~$v$, as $x$. 
			As in Claim~\ref{claim:no}, we can restrict the way $N(a)$ intersects $\{x,b,z_1\}$, and as we did in Claim~\ref{claim:base:crossing}, we can deduce that $\bdd{C_3}{C_1}$ is complete to $C_1$ 
			and that there is no bag other than $C_3$ containing a vertex that crosses $\bdd{C_1}{C_2}$.
			
			Observe that $\{v, z_1, z_2, x, a, b\}$ induces a $W_{2, 0}^-$ with terminal vertex $a$. 
			By applying Lemma~\ref{lem:extendgood} similarly in Case~\ref{case:big:2},
		one can in polynomial time find an obstruction or 
		find a maximal good boundary-crossing path ending in $C_3C_1C_2$.
		In the latter case, we apply Lemma~\ref{lem:shorten:path} to modify $\calT$ along this path and obtain a partition tree $\calT'$ for $G-v$ such that no vertex crosses $\bdd{C_1}{C_2}$. 
	Then, since both $\bdd{C_1}{C_2}$ and $\bdd{C_2}{C_1}$ have no crossing vertices, we can apply Lemma~\ref{lem:base:nocrossing} to obtain a partition tree for $G$.
			
			Now we consider the case in which only $\bdd{C_2}{C_1}$ has a crossing vertex. Let $a$ be a vertex in a bag 
			$C_3$ that crosses $\bdd{C_2}{C_1}$.
			Note that $\{v,z_1,z_2,x\}$ is a wing of $W_{1,0}$ with terminal $z_2$, as in Case~\ref{case:big:2} (see (b) of Figure~\ref{fig:case2-base}). 
			As in Claims~\ref{claim:base2:crossing} and Lemma~\ref{lem:extendgood}, we can find a maximal good boundary-crossing path ending in $C_2C_1$. 		We apply Lemma~\ref{lem:shorten:path} to modify $\calT$ along this path and obtain a partition tree $\calT'$ for $G-v$ such that no vertex crosses $\bdd{C_2}{C_1}$. 
	Then, since both $\bdd{C_1}{C_2}$ and $\bdd{C_2}{C_1}$ have no crossing vertices, we can apply Lemma~\ref{lem:base:nocrossing} to obtain a partition tree for $G$.
				 
			 To conclude, in the case in which both $\bdd{C_1}{C_2}$ and $\bdd{C_2}{C_1}$ have crossing vertices, we can first modify $\calT$ along a maximal boundary-crossing path ending in $C_2C_1$, then along a maximal boundary-crossing path ending in $C_1C_2$. In this way we obtain a partition tree for $G-v$ in which, again, both $\bdd{C_1}{C_2}$ and $\bdd{C_2}{C_1}$ have no crossing vertices and we proceed with Lemma~\ref{lem:base:nocrossing}.
		\end{claimproof}
	\end{case}

	This concludes the lemma.
\end{proof}

\begin{lemma}\label{lem:charcase3}
		If $C_1 \setminus N(v) \neq \emptyset$,  $C_2 \setminus N(v) \neq \emptyset$ and $N(v)=\bdd{C_1}{C_2}\cup \bdd{C_2}{C_1}$, 
	then one can in polynomial time either
	output an obstruction in $G$ or output a partition tree of $G$ confirming that $G$ is a well-partitioned chordal graph.
\end{lemma}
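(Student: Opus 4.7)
The plan is to follow the three-step template used in Lemmas~\ref{lem:charcase1} and~\ref{lem:charcase2}. I fix representatives $z_1 \in \bdd{C_1}{C_2}$ and $z_2 \in \bdd{C_2}{C_1}$, both neighbors of $v$, as well as $x \in C_1 \setminus \bdd{C_1}{C_2}$ and $y \in C_2 \setminus \bdd{C_2}{C_1}$, both non-neighbors of $v$; all four exist by the hypotheses of the case. I then check whether any vertex of $G-v$ crosses $\bdd{C_1}{C_2}$ or $\bdd{C_2}{C_1}$. If none does, I invoke Lemma~\ref{lem:base:nocrossing} on $\calT$: the resulting partition tree has a middle bag equal to $\bdd{C_1}{C_2}\cup\bdd{C_2}{C_1} = N(v)$, which is a clique containing every neighbor of $v$, so attaching a new leaf bag $\{v\}$ to that middle bag immediately yields a partition tree for $G$.

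Otherwise, by symmetry between $C_1$ and $C_2$, I may assume that some bag $C_3 \neq C_2$ contains a vertex $a$ that crosses $\bdd{C_1}{C_2}$ in $C_1$, with neighbors $p \in \bdd{C_1}{C_2}$ and $q \in C_1 \setminus \bdd{C_1}{C_2}$. In the spirit of Claims~\ref{claim:no} and~\ref{claim:base:crossing}, I perform a case analysis on how $a$ is attached to $\bdd{C_1}{C_2}$ and to $C_1 \setminus \bdd{C_1}{C_2}$, and on whether a further bag $D \neq C_3$ contributes a crossing. Each failure of completeness or exclusivity should immediately expose an induced subgraph isomorphic to one of \ABar{}, \FourFan{}, \AhnOne{}, or a small $W_{s,0}$, built from $\{v, z_1, z_2, x, y\}$ together with $a$, its neighbors $p, q$, and any second crossing vertex.

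If the analysis reduces to the situation where $a$ is complete to $\bdd{C_1}{C_2}$ and $C_3$ is the unique bag providing crossings, then the path $C_3 C_1$ qualifies as a good boundary-crossing path. I then choose $G_1$ as a connected subgraph on a suitable subset of $\{v, z_1, z_2, x, y\} \cup \{a, p, q\}$ so that, for every candidate crossing vertex $p'$ in a bag adjacent to $C_3$, the subgraph $G[V(G_1)\cup\{p'\}]$ is isomorphic to some $W_{s,t}^-$ with terminal $p'$, and I apply Lemma~\ref{lem:extendgood} to either exhibit an obstruction in $G$ or extend to a maximal good boundary-crossing path ending in $C_3 C_1$. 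In the latter case, iterated application of Lemma~\ref{lem:shorten:path} eliminates all crossings of $\bdd{C_1}{C_2}$ in a modified partition tree $\calT'$, and the symmetric argument removes crossings of $\bdd{C_2}{C_1}$; the resulting tree then falls under the first paragraph of this plan.

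The hard part will be choosing the correct base graph $G_1$ in the third paragraph. In Lemma~\ref{lem:charcase2} a non-neighbor of $v$ inside $\bdd{C_1}{C_2}$ combined with $v, z_1, z_2$ into a diamond that fused with a crossing vertex to give $W_{1,1}^-$, but here $\bdd{C_1}{C_2} \subseteq N(v)$ and the only non-neighbor of $v$ in $C_1$ sits outside the boundary, so that diamond is no longer directly available. The candidate wing will therefore have to be assembled out of $v$, $z_2$, $y$, and the crossing vertex's own $C_1$-neighborhood (containing both a boundary vertex $p$ and a non-boundary vertex $q$, which together supply the missing diamond shape); the bookkeeping needed to guarantee that the resulting subgraph is actually isomorphic to some $W_{s,t}^-$, rather than accidentally containing a smaller obstruction, is the delicate step.
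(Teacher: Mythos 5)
Your plan founders exactly where you flag it, and the difficulty is not merely delicate bookkeeping: in this case no wing $G_1$ of the kind Lemma~\ref{lem:extendgood} requires can exist in general, and one-sided failures of completeness or exclusivity do \emph{not} expose obstructions. Concretely, take $C_1 = \{z_1, z_1', x\}$ with $\bdd{C_1}{C_2} = \{z_1, z_1'\}$, $C_2 = \{z_2, y\}$ with $\bdd{C_2}{C_1} = \{z_2\}$, a bag $C_3 = \{a\}$ with $N(a) = \{z_1, x\}$, and $N(v) = \{z_1, z_1', z_2\}$. All hypotheses of the lemma hold, $a$ crosses $\bdd{C_1}{C_2}$, and $a$ is not complete to $C_1$ --- a failure your second paragraph promises to convert into an induced $O_1$, $O_2$, $O_3$, or small $W_{s,0}$ --- yet $G$ is well-partitioned chordal: the path tree with bags $\{a\}$, $C_1$, $\{z_2, v\}$, $\{y\}$, in this order, is a valid partition tree. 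Since $G$ contains no obstruction at all, no case analysis confined to the $C_1$ side can output one; and if a valid $G_1$ for Lemma~\ref{lem:extendgood} existed around $v$, that lemma would manufacture an obstruction in this obstruction-free graph. The structural reason is the one you half-identify: since $N(v) = \bdd{C_1}{C_2} \cup \bdd{C_2}{C_1}$, there is no non-neighbor of $v$ inside either boundary, so $v$ seeds no diamond, and an anomaly on one side of $v$ is only \emph{half} an obstruction --- whether a genuine obstruction exists depends on the other side.

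The paper's proof accordingly takes a different route, with two ingredients your proposal lacks. First, a one-sided cleaning step (Claim~\ref{claim:case:2-2}): if just one boundary, say $\bdd{C_1}{C_2}$, has no crossing vertex, then splitting $C_1$ into $C_1 \setminus \bdd{C_1}{C_2}$ and $\bdd{C_1}{C_2} \cup \{v\}$ already yields a partition tree of $G$; this exploits $N(v) = \bdd{C_1}{C_2} \cup \bdd{C_2}{C_1}$ and is strictly stronger than your use of Lemma~\ref{lem:base:nocrossing}, which needs both boundaries clean --- so there is no need for your sequential double-clearing. Second, instead of Lemma~\ref{lem:extendgood}, the paper runs Lemma~\ref{lem:extendgood-pre} (which needs no wing and returns \emph{partial} obstructions $W_{s,t}^-$ or the special diamond) separately on the $C_1$ side and on the $C_2$ side. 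If either run ends in a maximal good boundary-crossing path, Lemma~\ref{lem:shorten:path} clears that single boundary and Claim~\ref{claim:case:2-2} finishes. Otherwise one holds partial obstructions $H_1$ with terminal $v_1 \in \bdd{C_1}{C_2}$ and $H_2$ with terminal $v_2 \in \bdd{C_2}{C_1}$; since $vv_1, vv_2, v_1v_2 \in E(G)$, the triangle $\{v, v_1, v_2\}$ glues the two halves, and $G[V(H_1) \cup V(H_2) \cup \{v\}]$ is a full $W_{s,t}$ or $O_4$. This two-sided gluing is the heart of the case, and your proposal has no analogue of it.
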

\begin{proof}	
	We first show that if at least one of $\bdd{C_1}{C_2}$ and $\bdd{C_2}{C_1}$ has no crossing vertex, then we can obtain a partition tree for $G$.
		\begin{nestedclaim}\label{claim:case:2-2}
			If there is no vertex crossing $\bdd{C_1}{C_2}$, then one can obtain a partition tree of $G$ from $\calT$ in polynomial time. The same holds for $\bdd{C_2}{C_1}$.
		\end{nestedclaim}
		\begin{claimproof}
			We prove the claim for $\bdd{C_1}{C_2}$ and note that the argument for $\bdd{C_2}{C_1}$ is symmetric. 
			Let $C_1' \defeq C_1 \setminus \bdd{C_1}{C_2}$, and $C_{12}' \defeq \bdd{C_1}{C_2} \cup \{v\}$.
			Let $\calS_1 \subseteq N_\calT(C_1)$ be such that for all $S_1 \in \calS_1$, $\bdd{C_1}{S_1} \subseteq C_1 \setminus \bdd{C_1}{C_2}$, 
			and let $\calS_2 \subseteq N_\calT(C_1)$ be such that for all $S_2 \in \calS_2$, $\bdd{C_1}{S_2} \subseteq \bdd{C_1}{C_2}$.
			We obtain a partition tree $\calT'$ for $G$ from $\calT$ as follows.
			\begin{itemize}
				\item[$\cdot$] Remove $C_1$; add $C_1'$ and $C_{12}'$; make $C_1'$ adjacent to $C_{12}'$, and $C_{12}'$ adjacent to $C_2$.
				\item[$\cdot$] Make each bag in $\calS_1$ adjacent to $C_1'$, and each bag in $\calS_2$ adjacent to $C_{12}'$.
			\end{itemize}
			This yields a partition tree for $G$.
		\end{claimproof}
		
		From now on, we assume that both $\bdd{C_1}{C_2}$ and $\bdd{C_2}{C_1}$ have crossing vertices. 
		Let $C_2'$ be a bag containing a vertex crossing $\bdd{C_1}{C_2}$, and 
		let $C_3$ be a bag containing a vertex crossing $\bdd{C_2}{C_1}$.
		For convenience, let $C_1'\defeq C_1$.
		
		Using Lemma~\ref{lem:extendgood-pre} with $B=\bdd{C_1'}{C_2}$, 
		we recursively find a longer good boundary-crossing path or a partial obstruction.
		Starting from $C_2'C_1'$, for a path $C_i'C_{i-1}' \ldots C_1'$, 
		we find a neighbor bag $C_{i+1}'$ of $C_i'$ that contains a vertex crossing $\bdd{C_i'}{C_{i-1}'}$.
		At the end, either we can find one of first two outcomes in Lemma~\ref{lem:extendgood-pre}, or 
		we can find a maximal good boundary-crossing path ending in $C_2'C_1'C_2$.
		In the latter case, we can repeatedly apply Lemma~\ref{lem:shorten:path} to modify $\calT$ along this path and obtain a partition tree $\calT'$ for $G-v$ such that no vertex crosses $\bdd{C_1'}{C_2'}$. We can now apply Claim~\ref{claim:case:2-2} to obtain a partition tree for the entire graph $G$.
		Thus, we may assume that we have an induced subgraph $H_1$ which is one of two outcomes in Lemma~\ref{lem:extendgood-pre}.
		Let $v_1$ be the terminal of $H_1$ in $\bdd{C_1'}{C_2}$.
		
		By applying the same argument for $C_2C_3$, 
		we may assume that we have an induced subgraph $H_2$ which is one of two outcomes in Lemma~\ref{lem:extendgood-pre}.
		Let $v_2$ be the terminal of $H_2$ in $\bdd{C_2}{C_1}$.
			
		If both $H_1$ and $H_2$ are the first outcome in Lemma~\ref{lem:extendgood-pre}, 
		then $G[V(H_1)\cup V(H_2)\cup \{v\}]$ is isomorphic to $W_{s,t}$ for some $s\in \{1,2,3\}$ and $t\ge 0$.
		If $H_1$ is the first outcome and $H_2$ is the second outcome of Lemma~\ref{lem:extendgood-pre}, 
		then $G[V(H_1)\cup V(H_2)\cup \{v\}]$ is isomorphic to $W_{s,t}$ for some $s\in \{2,3\}$ and $t\ge 0$, 
		where $G[V(H_2)\cup \{v, v_1\}]$ is isomorphic to $W_{2,0}^-$.
		If both are the second outcomes in Lemma~\ref{lem:extendgood-pre}, 
		then $G[V(H_1)\cup V(H_2)\cup \{v\}$ is isomorphic to $O_4$, and this concludes the lemma.
\end{proof}

\section{Algorithmic applications}\label{sec6}
In this section, we give several \FPT-algorithms and kernels for problems on well-partitioned chordal graphs.
Specifically, we consider variants of the \textsc{Disjoint Paths} problem, 
where each path additionally has to be from a predefined domain.
Before we proceed with the algorithmic description, we review the variants of the term `disjoint paths' 
that we use in this section. 
Let $P_1$ be an $(s_1, t_1)$-path and $P_2$ be an $(s_2, t_2)$-path.
The paths $P_1$ and $P_2$ being disjoint is most literally translated to $V(P_1) \cap V(P_2) = \emptyset$.
However, in some applications of disjoint paths problems, e.g.~\cite{patterns2014,Heg15}, 
notions of \emph{internally disjoint} paths are used, meaning that the intersection of 
$\{s_1, t_1\}$ and $\{s_2, t_2\}$ may be nonempty.
In this section, we deal with two variants of the latter definition and note that they in fact both
generalize the setting where we require $V(P_1) \cap V(P_2) = \emptyset$.
Specifically, in Section~\ref{sec:alg:srdp}, we consider the \SRDP{} problem, asking for \emph{internally vertex-disjoint} paths.
We say that $P_1$ and $P_2$ are internally vertex-disjoint, 
if for $i \in [2]$, $(V(P_i) \setminus \{s_i, t_i\}) \cap V(P_{3-i}) = \emptyset$,
meaning that no internal vertex of one path is used as a vertex on the other path.
However, if $\{s_1, t_1\} = \{s_2, t_2\}$ and $s_1 t_1 \in E(G)$, then according to this definition, 
the edge $s_1t_1$ can be used both as the path $P_1$ and as the path $P_2$ in a solution without violating the definition.
In Section~\ref{sec:alg:srtdp}, we study the \SRTDP{} problem that asks for internally vertex-disjoint paths that are also distinct.
In Section~\ref{sec:alg:srdcs}, we sketch how to use the same methods to solve 
the related \SRDCS{} problem on well-partitioned chordal graphs.

Moreover, in Section~\ref{sec:kernels:split}, we observe that the algorithms given in Sections~\ref{sec:alg:srdp} and~\ref{sec:alg:srtdp}
imply polynomial kernels for the \SRDP{} and \SRTDP{} problems on split graphs.
In Section~\ref{sec:kernels:wpc}, we show that with two more simple reduction rules regarding degree-two bags of partition trees
we can obtain polynomial kernels for \probDP{} and \probTDP{} on well-partitioned chordal graphs.

\subsection{Set-Restricted Disjoint Paths}\label{sec:alg:srdp}
In this section we deal with the following parameterized problem, and show that it is in \FPT{} 
on well-partitioned chordal graphs.

\fancyparproblemdef
	{Set-Restricted Disjoint Paths}
	{A graph $G$, a set $\terminals = \{(s_1, t_1), \ldots, (s_k, t_k)\}$ of 
	$k$ pairs of vertices of $G$, called \emph{terminals}, 
	a set $\mathcal{U} = \{U_1, \ldots, U_k\}$ of $k$ vertex subsets of $G$, called \emph{domains}.}
	{$k$}
	{Does $G$ contain $k$ pairwise internally vertex-disjoint paths $P_1, \ldots, P_k$ such that 
			for $i \in [k]$, $P_i$ is an $(s_i, t_i)$-path with $V(P_i) \subseteq U_i$?
	}

We let $V(\terminals) \defeq \bigcup_{i \in [k]} \{s_i, t_i\}$.
For a set of vertices $S \subseteq V(G)$, let $\labeling \colon S \to [k]$ be a labeling.
We say that $\labeling$ is \emph{domain-preserving}, if for each $v \in S$, $v \in U_{\labeling(v)}$.
We use domain-preserving labelings in our algorithm later 
to gradually build paths that only use vertices of the prescribed domains.

\begin{remark}\label{remark:terminals:domains}
	Let $(G, k, \terminals, \domains)$ be an instance of \SRDP{}.
	For ease of exposition, we make the following assumptions.
	First, we assume that $G$ is connected, since otherwise, 
	we can simply solve the problem on each connected component separately.
	Furthermore, for each $i \in [k]$, we assume that 
	$\{s_i, t_i\}\subseteq U_i$ and $V(\terminals) \cap (U_i\setminus \{s_i, t_i\})=\emptyset$.
	This way, we ensure directly that no path $P_i$ can use a terminal $s_j$ or $t_j$ ($i \neq j$)
	as an internal vertex.
\end{remark}

Suppose that for some terminal pair $(s_i, t_i)$, we have that $s_i t_i \in E(G)$.
Then, we can use the edge $s_i t_i$ as a path in a solution.
Since such a path has no internal vertex, and the \SRDP{} problem asks for
\emph{internally} vertex-disjoint paths, using this edge as the $(s_i, t_i)$-path cannot
create any conflict with any other path. Therefore, we can safely remove the terminal 
pair $(s_i, t_i)$ from the instance without changing the answer to the problem.
\begin{reduction}\label{red:srdp:terminal:edge}
	Let $(G, k, \terminals, \domains)$ be an instance of \textsc{Set-Restricted Disjoint Paths}
	such that for some $i \in [k]$, $s_i t_i \in E(G)$. Then, reduce this instance to 
	$(G, k-1, \terminals \setminus \{(s_i, t_i)\}, \domains \setminus \{U_i\})$.
\end{reduction}

Next, we observe that finding pairwise internally vertex-disjoint paths is equivalent to finding
pairwise internally vertex-disjoint \emph{induced} paths.
We call a solution consisting of induced paths a \emph{minimal} solution.
The following observation is an immediate consequence by the fact
that each bag of a partition tree is a clique in the underlying graph.
\begin{observation}\label{obs:srdp:intersection:bag}
	Let $(G, k, \terminals, \domains)$ be an instance of \textsc{Set-Restricted Disjoint Paths}
	such that $G$ is a connected well-partitioned chordal graph.
 	Let $P$ be a path in a minimal solution to $(G, k, \terminals, \domains)$, and let $B$ be a bag 
	of the partition tree of $G$. Then, $\card{V(P) \cap B} \le 2$.
\end{observation}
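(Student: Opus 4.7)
The plan is to argue by direct contradiction, combining two ingredients that are already in place. First, by the definition of a \emph{minimal} solution stated just before the observation, every path $P$ in such a solution is an induced path in $G$. Second, item~\ref{def:wp:cliques} of the definition of well-partitioned chordal graphs tells us that every bag $B$ of the partition tree $\calT$ is a clique in $G$. With these two facts on the table, the bound $\card{V(P) \cap B} \le 2$ should follow almost immediately.

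Concretely, I would suppose toward a contradiction that there exist three distinct vertices $u, v, w \in V(P) \cap B$. Since $B$ is a clique, all three edges $uv$, $uw$, and $vw$ lie in $E(G)$. On the other hand, $P$ is a simple path, so its vertices admit a linear ordering along $P$; among any three of them, at most two pairs can be consecutive on $P$, so at least one pair $(x, y) \in \{u,v,w\} \times \{u,v,w\}$ is non-consecutive along $P$. Because $P$ is induced in $G$, any two non-consecutive vertices of $P$ are non-adjacent in $G$, giving $xy \notin E(G)$ and contradicting the fact that $B$ is a clique. Thus $\card{V(P) \cap B} \le 2$.

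I do not anticipate any real obstacle: the whole argument is a one-line combination of ``induced path'' and ``bag is a clique'', so the only point to be careful about is to phrase the non-consecutive/non-adjacent implication correctly, namely that if $V(P)$ induces exactly the path $P$, then every edge of $G$ between two vertices of $V(P)$ must already be an edge of $P$. No appeal to the partition tree structure beyond item~\ref{def:wp:cliques} is needed, and no case analysis on which bag $B$ is involved.
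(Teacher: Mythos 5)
Your proof is correct and matches the paper's approach exactly: the paper states this observation without proof as ``an immediate consequence'' of the fact that each bag is a clique combined with the paths of a minimal solution being induced, and your argument (three vertices in a clique bag force an edge between two non-consecutive vertices of an induced path) is precisely the spelled-out version of that one-line justification.
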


Throughout the following, let $(G, k, \terminals, \domains)$ 
be an instance of \SRDP{},
where $G$ is a connected well-partitioned chordal graph that is given 
together with a partition tree $\calT$ of $G$.
Based on Observation~\ref{obs:srdp:intersection:bag}, we now describe a marking procedure that 
marks at most $4k^2$ vertices of each bag $B \in V(\calT)$ such that if there is a solution
to $(G, k, \terminals, \domains)$, then there is a solution whose paths use only marked vertices as internal vertices.

\begin{lemma}\label{lem:srdp:marking}
	Let $(G, k, \terminals, \domains)$ be an instance of \textsc{Set-Restricted Disjoint Paths},
	such that $G$ is a well-partitioned chordal graph with partition tree $\calT$,
	after exhaustive application of Reduction~\ref{red:srdp:terminal:edge}.
	Then, there is a $\calO(k^2 \cdot n)$ time algorithm that computes sets $M_1, \ldots, M_k \subseteq V(G)$
	such that for all $i \in [k]$, $M_i \subseteq U_i$, and the following hold.
	\begin{itemize}
		\item For each bag $B \in V(\calT)$, and each $i \in [k]$, $\card{B \cap M_i} \le 4k$.
		\item Let $\calT'$ be the forest in $\calT$ induced by all bags with a nonempty intersection
			with $\bigcup_{i \in [k]} M_i$.
			Then, $\calT'$ has at most $2k$ bags of degree one.
		\item $(G, k, \terminals, \domains)$ is a \yes-instance if and only if there exists 
		a minimal solution $(P_1, \ldots, P_k)$ such that for each $i \in [k]$, $V(P_i) \subseteq M_i \cup \{s_i, t_i\}$.
	\end{itemize}
\end{lemma}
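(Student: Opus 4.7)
The plan rests on a structural observation about minimal solutions in well-partitioned chordal graphs: in any minimal solution, each path $P_i$ traverses precisely the unique path $B^i_0 = A_i, B^i_1, \ldots, B^i_{\ell_i} = A'_i$ in $\calT$ between the bag $A_i$ containing $s_i$ and the bag $A'_i$ containing $t_i$. Indeed, consecutive vertices of $P_i$ lie in a common bag or in $\calT$-adjacent bags, so the sequence of bags visited by $P_i$ forms a walk in $\calT$ from $A_i$ to $A'_i$; revisiting a bag $B$ would place two distinct vertices of $P_i$ in a clique, creating a chord and violating minimality, so the walk is in fact the unique tree-path between $A_i$ and $A'_i$. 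By Observation~\ref{obs:srdp:intersection:bag}, $P_i$ uses at most two vertices per bag, and when $B^i_j$ is interior to the tree-path these vertices lie in $\bdd{B^i_j}{B^i_{j-1}} \cap U_i$ and $\bdd{B^i_j}{B^i_{j+1}} \cap U_i$ respectively.

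This motivates the marking procedure. For each $i \in [k]$, compute the tree-path from $A_i$ to $A'_i$ in time $\calO(n)$ (e.g.\ by rooting $\calT$), and for each bag $B^i_j$ on this path mark into $M_i$ up to $2k$ arbitrary vertices of $\bdd{B^i_j}{B^i_{j-1}} \cap U_i$ when $j \ge 1$, and up to $2k$ arbitrary vertices of $\bdd{B^i_j}{B^i_{j+1}} \cap U_i$ when $j \le \ell_i - 1$, marking the entire intersection whenever it has fewer than $2k$ vertices. By construction $|B \cap M_i| \le 4k$ for every bag $B$ and every $i$, and the total runtime is $\calO(k^2 \cdot n)$ since the $k$ tree-paths together contain $\calO(kn)$ bag incidences and each marking event contributes $\calO(k)$ work. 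For the leaf bound, every bag of $\calT'$ lies on some tree-path $P^i$, and any bag interior to some $P^i$ has both of its $P^i$-neighbors in $\calT'$; hence a degree-one bag of $\calT'$ must be an endpoint of every tree-path it lies on and therefore contains some $s_j$ or $t_j$, giving at most $2k$ such bags.

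The nontrivial direction of the equivalence is to turn any minimal solution $(P_1, \ldots, P_k)$ into one whose internal vertices lie in the marked sets. Define the \emph{badness} of a solution as the number of pairs $(i, v)$ where $v$ is an internal vertex of $P_i$ with $v \notin M_i$, and iteratively decrease it by a local swap. Pick an unmarked internal vertex $v$ of some $P_i$ in a bag $B$; by the structural observation $v \in \bdd{B}{B'} \cap U_i$ for some neighbor $B'$ of $B$ on $P^i$, and unmarkedness forces $|\bdd{B}{B'} \cap U_i| > 2k$, so $M_i$ contains exactly $2k$ vertices of that boundary. Since the other $k-1$ paths together use at most $2(k-1)$ vertices of $B$, at least two marked vertices of $\bdd{B}{B'} \cap U_i$ are unused by any other path; replace $v$ by any such $v'$ in $P_i$. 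The substitution yields a valid (possibly non-induced) path because $B$ is a clique and $\bdd{B}{B'}$ is completely joined to $\bdd{B'}{B}$. Iterating until badness is zero and finally shortcutting each modified path to make it induced (which removes only vertices and therefore preserves both $V(P_i) \subseteq M_i \cup \{s_i, t_i\}$ and pairwise internal-vertex-disjointness) produces the required minimal solution.

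The main obstacle I anticipate is the subtle sub-case where $P_i$ originally uses a single vertex $v$ in $B$ that simultaneously lies in both boundaries $\bdd{B}{B^i_{j-1}}$ and $\bdd{B}{B^i_{j+1}}$; substituting a single marked vertex from only one boundary would then break adjacency to one of the neighboring bags. This is resolved by a $1$-to-$2$ swap, which by the same counting argument is always possible: we insert two marked vertices, one available from each boundary, adjacent to each other in the clique $B$ and each adjacent to the corresponding $P_i$-vertex of the neighbouring bag via the complete bipartite boundary. The badness still strictly decreases and the bound of two $P_i$-vertices per bag remains intact, so the inductive argument goes through unchanged.
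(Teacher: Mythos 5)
Your proposal is correct and follows essentially the same route as the paper's proof: the identical marking scheme (up to $2k$ vertices per boundary of each bag on the tree-path between the terminal bags, giving the $4k$ per-bag bound and the union-of-$k$-paths leaf count), and the same replacement argument based on the pigeonhole count $2k - 2(k-1) \ge 2$ of marked boundary vertices unused by other paths. Your iterative ``badness''-decreasing formulation with the explicit $1$-to-$2$ swap is just a repackaging of the paper's direct case analysis (it in fact fills in the $\card{Y \setminus M_i} = 1$ case the paper leaves to the reader, mirroring its step of collapsing two replacements to one when a substitute lies in both boundaries), so no substantive difference remains.
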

\begin{proof}
	We initialize $M_i \defeq \emptyset$ for all $i \in [k]$.
	For each $i \in [k]$, we do the following.
	Let $B_1 B_2 \ldots B_\ell$ be the path in $\calT$ such that $s_i \in B_1$ and $t_i \in B_\ell$.
	Then, for each $j \in [\ell-1]$, we add to $M_i$ a maximal subset of $U_i \cap \bdd{B_j}{B_{j+1}}$ of size at most $2k$,
	and a maximal subset of $U_i \cap \bdd{B_{j+1}}{B_j}$ of size at most $2k$. This finishes the construction of the sets $M_i$,
	and it is not difficult to see that it can be implemented to run in time $\calO(k^2\cdot n)$.
	
	We prove that $M$ has the claimed properties. 
	The first item is immediate.
	The second item follows from the observation that
	$\calT'$ consists of the union of $k$ paths in $\calT$.
	For the third item, suppose $(G, k, \terminals, \domains)$ is a \yes-instance, 
	and let $(P_1, \ldots, P_k)$ be a minimal solution.
	Suppose that for some bag $B \in V(\calT)$ and some $i \in [k]$, $V(P_i) \cap B \not\subseteq M_i \cup \{s_i, t_i\}$.
	Let $B_1 B_2 \ldots B_\ell$ be the path in $\calT$ from the bag containing $s_i$ to the bag containing $t_i$.
	We argue that we may assume that $B$ is a bag on this path.
	Suppose not, and let $j \in [\ell]$ be such that $B_j$ is the closest bag to $B$ on the path,
	and let $B_j'$ be the neighboring bag of $B_j$ on the path from $B_j$ to $B$.
	Then, $P_i$ must use two vertices from $\bdd{B_j}{B_j'}$, and hence there is a triangle in $P_i$,
	a contradiction with $P_i$ being a path of a minimal solution.
	
	From now on, let $j \in [\ell]$ be such that $B = B_j$, and let $Y = (V(P_i) \cap B_j) \setminus \{s_i, t_i\}$.
	First, if $j = 1$, then we may assume that there is only one vertex in $y \in Y \setminus M_i$, with $y \in \bdd{B_1}{B_2}$, 
	and that $s_i \notin \bdd{B_1}{B_2}$ (otherwise $P_i$ was not an induced path). 
	Since $y$ was not marked, there are $2k$ marked vertices in $\bdd{B_1}{B_2} \cap M_i$.
	Since the paths $P_j$, $j \neq i$, use at most $2(k-1)$ vertices in total from $B$ by Observation~\ref{obs:srdp:intersection:bag}, 
	there is at least one vertex in $\bdd{B_1}{B_2} \cap M_i$ that is not used by any other path, call that vertex $y'$. 
	We replace $y$ by $y'$ in $P_i$, and maintain the property that $P_i$ is an induced $(s_i, t_i)$-path, since $y$ and $y'$
	are twins in $G[B_1 \cup B_2]$. A similar argument can be given if $j = \ell$.
	
	Now suppose that $1 < j < \ell$.
	Again we have that $\card{Y} \le 2$.
	Suppose that $\card{Y \setminus M_i} = 2$, and let $\{y_1, y_2\} = Y \setminus M_i$.
	Assume wlog that $y_1 \in \bdd{B_j}{B_{j-1}}$ and that $y_2 \in \bdd{B_j}{B_{j+1}}$.
	By the same argument as above, there are vertices $y_1' \in \bdd{B_j}{B_{j-1}} \cap M_i$
	and $y_2' \in \bdd{B_j}{B_{j+1}} \cap M_i$ such that neither $y_1'$ nor $y_2'$
	are used by any other path in the solution.
	We can replace $\{y_1, y_2\}$ by $\{y_1', y_2'\}$ in $P_i$, and $P_i$ remains an $(s_i, t_i)$-path.
	If for $r \in [2]$, $y_r' \in \bdd{B_j}{B_{j-1}} \cap \bdd{B_j}{B_{j+1}}$, then we remove $y_{3-r}'$
	from $P_i$ to ensure that $P_i$ remains an induced path.
	The last case, when $\card{Y \setminus M_i} = 1$, can be treated with similar arguments and we therefore 
	skip the details.
	
	We have shown how to modify the paths $P_1, \ldots, P_k$ such that they remain induced paths, 
	and for all $i \in [k]$, $V(P_i) \subseteq M_i \cup \{s_i, t_i\}$, so the third item follows.
\end{proof}

Given any bag $B$, we can immediately observe which paths of a solution need to use some vertices
from $B$ as internal vertices. 
The next definition captures the property of a bag having a set of vertices that can be used 
as the internal vertices of all paths that need to go through $B$.
Note that in the algorithm of this section, we only need the special case of $[k]$-feasible bags;
however in the algorithm for \SRTDP{}, we need to be able to restrict this definition to a subset of $[k]$
which is why we give it in a more general form here.

\begin{definition}[$I$-Feasible Bag]\label{def:feasible}
	Let $I \subseteq [k]$.
	Let $B \in V(\calT)$ be a bag and $M_i \subseteq V(G)$, $i \in I$, be sets of vertices.
	Then, we say that $B$ is \emph{$I$-feasible w.r.t. $\{M_i \mid i \in I\}$},
	if there is a set $X \subseteq B$ and a labeling $\labeling \colon X \to [k]$ such that the following hold.
	For each $i \in I$ such that $B$ lies on the path 
	from the bag containing $s_i$ to the bag containing $t_i$ in $\calT$,
	and each neighbor $C$ of $B$ on that path,
	either $\{s_i, t_i\} \cap \bdd{B}{C} \neq \emptyset$,
	or there is a vertex $x_i \in X \cap M_i \cap \bdd{B}{C}$ such that $\labeling(x_i) = i$.
	%
	%
	We use the shorthand `feasible' for `$[k]$-feasible'.
\end{definition}

\begin{lemma}\label{lem:srdp:feasible}
	Let $(G, k, \terminals, \domains)$ be an instance of \textsc{Set-Restricted Disjoint Paths},
	such that $G$ is a well-partitioned chordal graph with partition tree $\calT$,
	after exhaustive application of Reduction~\ref{red:srdp:terminal:edge}.
	Let $M_1, \ldots, M_k$ be sets of vertices given by Lemma~\ref{lem:srdp:marking}.
	Then, $(G, k, \terminals, \domains)$ is a \yes-instance if and only if each bag of $\calT$
	is feasible w.r.t.\ $M_1, \ldots, M_k$.
\end{lemma}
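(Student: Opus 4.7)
My plan is to prove the two directions separately, with the forward direction (solution implies feasibility of every bag) being essentially a book-keeping exercise, and the backward direction (feasibility of every bag implies a solution) requiring a concrete reconstruction of the $k$ paths from the local witnesses.

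For the forward direction I will invoke Lemma~\ref{lem:srdp:marking} to obtain a minimal solution $(P_1,\ldots,P_k)$ with $V(P_i)\subseteq M_i\cup\{s_i,t_i\}$. Fix a bag $B\in V(\calT)$. I set $X_B\defeq\bigcup_{i\in[k]}\bigl(V(P_i)\cap B\setminus\{s_i,t_i\}\bigr)$ and define $\lambda_B(v)\defeq i$ whenever $v\in V(P_i)$. Internal vertex-disjointness of the solution, together with the fact that $V(\terminals)\cap (U_j\setminus\{s_j,t_j\})=\emptyset$ from Remark~\ref{remark:terminals:domains}, ensures that $\lambda_B$ is well defined. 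To check the feasibility condition, I fix an index $i$ such that $B$ lies on the $\calT$-path from the bag of $s_i$ to the bag of $t_i$, and a neighbor $C$ of $B$ on that path. Since $P_i$ crosses the edge $BC$ of $\calT$, either $s_i$ or $t_i$ already lies in $\bdd{B}{C}$, or $P_i$ uses some internal vertex in $\bdd{B}{C}$; in the latter case, this vertex lies in $M_i$ by Lemma~\ref{lem:srdp:marking}, is in $X_B$, and is labelled $i$, as required.

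For the backward direction, for each bag $B$ I fix a feasibility witness $(X_B,\lambda_B)$. For every $i\in[k]$, let $B_0^i B_1^i\cdots B_{\ell_i}^i$ be the path in $\calT$ from the bag containing $s_i$ to the bag containing $t_i$. I define an entry vertex $p_j^i\in B_j^i$ and an exit vertex $q_j^i\in B_j^i$ for each $j$: set $p_0^i\defeq s_i$ and $q_{\ell_i}^i\defeq t_i$; for $1\le j\le\ell_i$, feasibility of $B_j^i$ applied to the neighbor $B_{j-1}^i$ yields either a terminal of $(s_i,t_i)$ in $\bdd{B_j^i}{B_{j-1}^i}$ (which I take as $p_j^i$) or a vertex in $X_{B_j^i}\cap M_i\cap\bdd{B_j^i}{B_{j-1}^i}$ labelled $i$ (which I take as $p_j^i$); symmetrically I obtain $q_j^i$ for $0\le j\le\ell_i-1$ using the direction $B_{j+1}^i$. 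The path $P_i$ is then the concatenation $p_0^i,q_0^i,p_1^i,q_1^i,\ldots,p_{\ell_i}^i,q_{\ell_i}^i$, where repeated consecutive vertices are collapsed. Consecutive pairs in the same bag are joined by a clique edge (item~\ref{def:wp:cliques} of the partition tree), and pairs $q_j^i,p_{j+1}^i$ are joined by an edge since $q_j^i\in\bdd{B_j^i}{B_{j+1}^i}$ and $p_{j+1}^i\in\bdd{B_{j+1}^i}{B_j^i}$ are in the complete bipartite interaction of item~\ref{def:wp:tree:parent}. All internal vertices of $P_i$ lie in $M_i\subseteq U_i$, and the endpoints $s_i,t_i$ lie in $U_i$ by Remark~\ref{remark:terminals:domains}, so the domain constraints are satisfied.

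It remains to verify internal vertex-disjointness of $(P_1,\ldots,P_k)$, which is the main point where I need to be careful. Suppose a vertex $v$ is internal to both $P_i$ and $P_{i'}$ with $i\neq i'$. Then $v$ belongs to a unique bag $B$, and by construction $v\in\{p_j^i,q_j^i\}\cap\{p_{j'}^{i'},q_{j'}^{i'}\}$ for the indices $j,j'$ with $B=B_j^i=B_{j'}^{i'}$. Since $v$ is internal, it is not a terminal of $P_i$ or $P_{i'}$, so the ``terminal'' alternative of the feasibility definition was not used when choosing $v$; consequently $\lambda_B(v)$ must simultaneously equal $i$ and $i'$, contradicting that $\lambda_B$ is a function. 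The remaining possibility is that $v$ is a terminal of one pair and an internal vertex of another, which is excluded exactly as in the forward direction using Remark~\ref{remark:terminals:domains}. This establishes that $(P_1,\ldots,P_k)$ is a valid solution, completing the backward direction and the lemma.
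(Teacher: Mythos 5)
Your proof is correct and takes essentially the same route as the paper's: the forward direction reads off a witness $(X,\labeling)$ for each bag from a minimal solution supplied by Lemma~\ref{lem:srdp:marking}, and the backward direction stitches each $(s_i,t_i)$-path through the bags of the tree path using boundary vertices labelled $i$, joined by clique edges within bags and complete-bipartite edges between adjacent boundaries. Your explicit check of internal vertex-disjointness --- fixing one witness $(X_B,\labeling_B)$ per bag \emph{before} looping over the indices, so that $\labeling_B$ being a function forbids a shared internal vertex, with Remark~\ref{remark:terminals:domains} ruling out terminals as internal vertices --- makes precise a step the paper leaves implicit, but it is a refinement of the same argument rather than a different approach.
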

\begin{proof}
	Suppose that $(G, k, \terminals, \domains)$ is a \yes-instance.
	By Lemma~\ref{lem:srdp:marking}, there is a minimal solution $(P_1, \ldots, P_k)$ 
	such that for all $i \in [k]$, $V(P_i) \subseteq M_i \cup \{s_i, t_i\}$.
	Let $B \in V(\calT)$ be a bag. Then, we let 
	$X \defeq (B \setminus V(\terminals)) \cap \bigcup_{i \in [k]} V(P_i)$,
	and $\labeling \colon X \to [k]$ be such that for all $x \in X$, 
	$\labeling(x) = i$ if $x \in V(P_i)$.
	Then, it is not difficult to see that $X$ and $\labeling$ show that $B$ is a feasible bag.
	
	For the other direction, suppose that each bag of $\calT$ is feasible w.r.t.\ $M_1, \ldots, M_k$.
	Let $i \in [k]$, and denote by $B_1 B_2 \ldots B_\ell$ the path in $\calT$ 
	such that $s_i \in B_1$ and $t_i \in B_\ell$.
	Then, for $j \in [\ell]$, let $X_j$ and $\labeling_j$ be the subset of $B_j$ and its labeling, respectively,
	that show that $B_j$ is feasible.
	Then, for each $j \in [\ell-1]$, there is a vertex $x_j \in \bdd{B_j}{B_{j+1}}$ such that $\labeling_j(x_j) = i$,
	and a vertex $y_{j+1} \in \bdd{B_{j+1}}{B_j}$ such that $\labeling_{j+1}(y_{j+1}) = i$.
	Then the sequence 
	$$s_i, x_1, y_2, x_2, y_3, x_3, \ldots, y_{\ell-1}, x_{\ell-1}, y_\ell, t_i$$
	(where it may happen that $s_i = x_1$ or $y_\ell = t_i$ or for some $j \in [\ell]$, $x_j = y_j$)
	can be used to obtain an $(s_i, t_i)$-path in $G$, and we can take any induced subpath of it to obtain an induced 
	$(s_i, t_i)$-path $P_i$ in $G$. 
	Since all labelings $\labeling_j$ are domain-preserving, we have that 
	$V(P_i) \subseteq U_i \cup \{s_i, t_i\}$.
\end{proof}

\begin{theorem}\label{thm:srdp}
	There is an algorithm that solves each instance $(G, k, \terminals, \domains)$ of \SRDP{}
	where $G$ is a well-partitioned chordal graph given along with a partition tree $\calT$,
	in time $2^{\calO(k\log k)} \cdot n$.
\end{theorem}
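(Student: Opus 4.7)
The plan is to tie together the marking procedure of Lemma~\ref{lem:srdp:marking} with the feasibility characterization of Lemma~\ref{lem:srdp:feasible}. First, I would apply Reduction~\ref{red:srdp:terminal:edge} exhaustively in linear time to dispose of all terminal pairs that are already connected by an edge, and then restrict attention to each connected component of $G$ separately (Remark~\ref{remark:terminals:domains}). After this preprocessing, I would invoke Lemma~\ref{lem:srdp:marking} to compute the sets $M_1,\ldots,M_k$ in time $\calO(k^{2}\cdot n)$, with the guarantee that an equivalent minimal solution exists whose internal vertices lie entirely in the marked sets. By Lemma~\ref{lem:srdp:feasible}, it then suffices to decide for every bag $B\in V(\calT)$ whether $B$ is feasible with respect to $M_1,\ldots,M_k$, and answer \yes{} exactly when every bag passes the test.

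The crux is to carry out the per-bag feasibility test within the claimed FPT budget. Fix a bag $B$ and inspect the at most $2k$ ``demands'' it carries: for each label $i\in[k]$ such that $B$ lies on the path in $\calT$ between the bag containing $s_i$ and the bag containing $t_i$, every neighbor $C$ of $B$ on that path contributes a demand $(i,C)$, unless $\{s_i,t_i\}\cap\bdd{B}{C}\neq\emptyset$, in which case the demand is already discharged by the terminal itself. Each live demand $(i,C)$ must be served by a distinct vertex of $M_i\cap\bdd{B}{C}$, and by the first guarantee of Lemma~\ref{lem:srdp:marking} this candidate set has size at most~$4k$.

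I would then decide feasibility by a brute-force enumeration: for each of the at most $2k$ demands, try each of the at most $4k$ candidate vertices in the appropriate boundary, giving at most $(4k)^{2k}=2^{\calO(k\log k)}$ combinations; for each combination, verify in $\calO(k^{2})$ time that the chosen vertices are pairwise distinct, which is exactly the condition for there to be a set $X\subseteq B$ and a labeling $\labeling\colon X\to[k]$ witnessing Definition~\ref{def:feasible}. Summing over all bags of $\calT$, the feasibility tests take $2^{\calO(k\log k)}\cdot n$ time, which dominates the $\calO(k^{2}n)$ cost of marking, and yields the overall bound of $2^{\calO(k\log k)}\cdot n$. If every bag is feasible, the reconstruction described in the second half of the proof of Lemma~\ref{lem:srdp:feasible} produces the actual paths by concatenating the witness vertices bag by bag; if some bag fails, we correctly output~\no.

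The main obstacle is really just to confirm that the per-bag test reduces to a tiny matching instance between at most $2k$ demands and at most $4k$ candidates per demand, so that no involved bipartite matching machinery is needed and the straightforward enumeration already stays within the $2^{\calO(k\log k)}$ factor. A secondary care point is to make sure that ``pass-through'' bags not hit by any terminal path are handled correctly; such bags carry no demands and are trivially feasible, so they contribute only constant time each to the enumeration loop.
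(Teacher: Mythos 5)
Your overall route is exactly the paper's: apply Reduction~\ref{red:srdp:terminal:edge} exhaustively, compute the marked sets via Lemma~\ref{lem:srdp:marking}, and then decide the instance by testing every bag for feasibility as in Lemma~\ref{lem:srdp:feasible}, with a brute-force $2^{\calO(k\log k)}$ test per bag (the paper enumerates the candidate sets $X$ together with their labelings rather than assigning candidates demand by demand, but this is the same idea and the same runtime analysis).

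There is, however, one subtle slip in your per-bag test. You require the vertices chosen for the at most $2k$ demands to be \emph{pairwise distinct} and claim this is exactly the condition of Definition~\ref{def:feasible}. It is not: for a fixed index $i$, the two demands $(i,C)$ and $(i,C')$ arising from the two neighbors of $B$ on the $s_i$--$t_i$ path may legitimately be served by a \emph{single} vertex lying in $\bdd{B}{C}\cap\bdd{B}{C'}$ --- the definition only asks for a labeling $\labeling\colon X\to[k]$, so one vertex labeled $i$ can discharge both demands, and indeed the path reconstruction in Lemma~\ref{lem:srdp:feasible} explicitly allows $x_j=y_j$ (a minimal solution path may intersect $B$ in just one vertex belonging to both boundaries). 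If, say, $M_i\cap\bdd{B}{C}=M_i\cap\bdd{B}{C'}=\{x\}$ with $x$ in both boundaries, your distinctness check rejects the only valid assignment and your algorithm would incorrectly answer \no{} on a \yes-instance. The fix is trivial and does not affect the runtime: require distinctness only between vertices assigned to \emph{different} indices, allowing the two demands of the same index to coincide (or simply enumerate $X$ and $\labeling$ directly, as the paper does). With that correction your proof is complete and matches the paper's.
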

\begin{proof}
	The algorithm works as follows.
	\begin{description}
		\item[Step 1.] Apply Reduction~\ref{red:srdp:terminal:edge} exhaustively.
		\item[Step 2.] Use Lemma~\ref{lem:srdp:marking} to obtain the sets $M_1, \ldots, M_k$ of marked vertices.
		\item[Step 3.] For each bag $B \in V(\calT)$, check if $B$ is feasible w.r.t.\ $M_1, \ldots, M_k$. 
			If all bags are feasible w.r.t.\ $M_1, \ldots, M_k$, then report \yes, and if not, report \no.
	\end{description}
	Correctness follows from Lemma~\ref{lem:srdp:feasible}.
	For the runtime, we have that the marking in Step 2 can be done in time $\calO(k^2 \cdot n)$ by Lemma~\ref{lem:srdp:marking}.
	Let $\calM \defeq \bigcup_{i \in [k]} M_i$.
	Since for each bag $B \in V(\calT)$, $\card{B \cap \calM} = \calO(k^2)$ by Lemma~\ref{lem:srdp:marking},
	we can try all $2^{\calO(k \log k)}$ sets $X$ and all $2^{\calO(k \log k)}$ labelings of each such $X$
	to check if $B$ is feasible. Therefore, the feasibility of $B$ can be checked in time $2^{\calO(k \log k)}$.
	Since there are at most $n$ bags in $\calT$, we have that the total runtime of verifying feasibility is $2^{\calO(k \log k)} \cdot n$.
\end{proof}

\subsection{Set-Restricted Totally Disjoint Paths}\label{sec:alg:srtdp}
As discussed above, we now adapt the algorithm of Theorem~\ref{thm:srdp} to find 
\emph{totally internally disjoint paths} instead, as it is done e.g.\ in the setting of finding 
topological minors in a graph~\cite{Heg15}.
Specifically, we are dealing with the following problem.

\fancyparproblemdef
	{Set-Restricted Totally Disjoint Paths}
	{A graph $G$, a set $\terminals = \{(s_1, t_1), \ldots, (s_k, t_k)\}$ of 
	$k$ pairs of vertices of $G$, called \emph{terminals}, 
	a set $\mathcal{U} = \{U_1, \ldots, U_k\}$ of $k$ vertex subsets of $G$, called \emph{domains}.}
	{$k$}
	{Does $G$ contain $k$ pairwise distinct and internally vertex-disjoint paths $P_1, \ldots, P_k$ such that 
			for $i \in [k]$, $P_i$ is an $(s_i, t_i)$-path with $V(P_i) \subseteq U_i$?
	}

Recall that $V(\terminals) \defeq \bigcup_{i \in [k]} \{s_i, t_i\}$, and that
for a set of vertices $S \subseteq V(G)$, a labeling $\labeling \colon S \to [k]$ is called
\emph{domain-preserving}, if for each $v \in S$, $v \in U_{\labeling(v)}$.
Furthermore, as in Remark~\ref{remark:terminals:domains}, we assume that in any instance we consider,
the input graph $G$ is connected and
for all $i \in [k]$, $\{s_i, t_i\} \subseteq U_i$ and $V(\terminals) \cap (U_i \setminus \{s_i, t_i\}) = \emptyset$.

Following the notation introduced in~\cite{Heg15}, we call an edge $xy \in E(G)$ \emph{heavy} 
if for some $w \ge 2$, there are pairwise distinct indices $i_1, \ldots, i_w$ 
such that for each $j \in [w]$, $\{x, y\} = \{s_{i_j}, t_{i_j}\}$. In that case, we call $w$ 
the \emph{weight} of the edge $xy$, and we say that the indices $i_1, \ldots, i_w$
\emph{weigh down on $xy$}. For each such heavy edge $e$, we use the notation $I(e) \defeq \{i_1, \ldots, i_w\}$.
We say that paths $P_{i_1}, \ldots, P_{i_w}$ \emph{satisfy} $e$ if there is precisely
one $a \in [w]$ such that $P_{i_a}$ consists of the edge $e$,
and for each $b \in [w] \setminus \{a\}$, $P_{i_b}$ is a length-$2$ $(x, y)$-path.
Furthermore, we call an index $i$ a \emph{heavy index} if 
$s_i t_i$ is a heavy edge. We say that an index is \emph{light} if it is not heavy.

First, we observe that if for some edge $xy \in E(G)$, there is \emph{precisely one} $i$
such that $\{x, y\} = \{s_i, t_i\}$, then we can again remove the terminal pair $(s_i, t_i)$
from the instance without changing the answer to the problem: We can always use the edge $xy$ 
as the path connecting $s_i$ to $t_i$. 
(Note that the following is a weaker form of Reduction~\ref{red:srdp:terminal:edge}, 
and that it does not apply to heavy edges.)
\begin{reduction}\label{red:srtdp:terminal:edge}
	Let $(G, k, \terminals, \domains)$ be an instance of 
	\textsc{Set-Restricted Totally Disjoint Paths}
	such that for some edge $xy \in E(G)$ there is precisely one $i \in [k]$
	such that $\{x, y\} = \{s_i, t_i\}$. 
	Then, reduce this instance to 
	$(G, k-1, \terminals \setminus \{(s_i, t_i)\}, \domains \setminus \{U_i\})$.
\end{reduction}

Due to Reduction~\ref{red:srtdp:terminal:edge}, we may from now on assume that for each 
$i \in [k]$, either $s_i t_i \notin E(G)$, or $s_it_i$ is a heavy edge. The former we can
handle as in the algorithm of Theorem~\ref{thm:srdp}, and we explain how to deal with the 
latter. 
The existence of a heavy edge rules out the approach of looking for \emph{minimal} 
solutions: if $xy$ is a heavy edge of weight $w$, then each solution contains $w-1$
paths between $x$ and $y$ that are not induced due to the existence of the edge $xy$.

However, a slight modification of this approach works.
We base our marking procedure on the existence of \emph{minimum} solutions,
i.e. solutions $(P_1, \ldots, P_k)$ such that there is no other solution
that contains fewer edges: for all solutions $(P_1', \ldots, P_k')$, it holds that
$\sum_{i \in [k]} \card{E(P_i)} \le \sum_{i \in [k]} \card{E(P_i')}$.
As shown in the following lemma due to Heggernes et al.~\cite{Heg15}, 
minimum solutions have a very restricted structure
in chordal graphs as well.\footnote{Note that Lemma 2 in~\cite{Heg15} is proved 
for the \textsc{Disjoint Paths} problem; however, the proof goes through for
\textsc{Set-Restricted Totally Disjoint Paths} as well.}
\begin{lemma}[Cf.~Lemma 2 in~\cite{Heg15}]\label{lem:srtdp:minimum}
	Let $(G, k, \terminals, \domains)$ be an instance of \textsc{Set-Restricted Disjoint Paths}, such that $G$
	is a chordal graph. If $(P_1, \ldots, P_k)$ is a minimum solution to $(G, k, \terminals, \domains)$, 
	then for each $i \in [k]$, precisely one of the following holds.
	\begin{itemize}
		\item $P_i$ is an induced path;
		\item $P_i$ is a path of length two, and there exists some $j \in [k]$ such that 
			$P_j$ is of length one and $\{s_i, t_i\} = \{s_j, t_j\}$.
	\end{itemize}
\end{lemma}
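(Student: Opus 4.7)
The plan is to argue by contradiction: assume that $(P_1,\dots,P_k)$ is a minimum solution and some $P_i$ violates both alternatives of the lemma, and then produce a strictly cheaper solution. Since $P_i$ is not induced, I will use chordality to shortcut $P_i$ within its own vertex set. Iteratively contracting along chords yields an induced $(s_i,t_i)$-path $Q_i$ in $G[V(P_i)]$ with $|E(Q_i)|<|E(P_i)|$, and automatically $V(Q_i)\subseteq V(P_i)\subseteq U_i$. Moreover, each internal vertex of $Q_i$ is an internal vertex of $P_i$; by internal vertex-disjointness of the original solution, such a vertex lies in no $V(P_j)$ with $j\neq i$.

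If $|E(Q_i)|\ge 2$, then $Q_i$ has an internal vertex $u$ that belongs to no other $V(P_j)$. This makes $Q_i$ internally vertex-disjoint from all $P_j$ and distinct from every $P_j$ (which cannot contain $u$), so replacing $P_i$ by $Q_i$ gives a smaller feasible solution, contradicting minimality.

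If $|E(Q_i)|=1$, then $Q_i$ is the single edge $s_it_i$ and $s_it_i\in E(G)$. If no $P_j$ coincides with this edge, the same replacement works and we are done. Otherwise some $P_j$ is precisely the edge $s_it_i$, and in particular $\{s_j,t_j\}=\{s_i,t_i\}$. When $|E(P_i)|=2$ this is exactly the second alternative of the lemma, contradicting the standing assumption. Hence we may assume $|E(P_i)|\ge 3$. Here I will produce a length-two replacement: since $G[V(P_i)]$ is chordal and contains the cycle $P_i\cup\{s_it_i\}$ of length at least $4$, the shortest cycle through the edge $s_it_i$ in $G[V(P_i)]$ must be a triangle (any longer such cycle has a chord that splits it into a strictly shorter cycle still containing the edge $s_it_i$). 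This triangle provides a vertex $v\in V(P_i)\setminus\{s_i,t_i\}$ adjacent to both endpoints, giving the length-two path $Q_i'=s_i v t_i$. Its internal vertex $v$ is in $V(P_i)$, so $Q_i'$ is internally vertex-disjoint from every other $P_{j'}$; and $Q_i'$ is distinct from every $P_{j'}$, because any other $P_{j'}$ with $\{s_{j'},t_{j'}\}=\{s_i,t_i\}$ and internal vertex $w$ would have $w\notin V(P_i)$ by the original internal disjointness, hence $w\neq v$. Replacing $P_i$ by $Q_i'$ strictly decreases the total edge count, contradicting minimality.

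The main obstacle is the tie-breaking in the ``heavy-edge'' subcase, where the naive single-edge shortcut coincides with an existing $P_j$; this is resolved precisely by the chordality-based triangle argument, which hands us a length-two replacement whose midpoint is forced to be distinct from the internal vertex of any parallel path via the disjointness of the original solution. Apart from that step, the argument is the standard ``take any induced path inside $V(P_i)$ and reinsert it'' technique, with the only added bookkeeping being that the domain constraint $V(Q_i)\subseteq V(P_i)\subseteq U_i$ comes essentially for free because we never leave $V(P_i)$.
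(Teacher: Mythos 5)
Your proof is correct. It is worth noting what you are comparing against: the paper does not actually prove this lemma --- it imports it from Lemma~2 of Heggernes et al.\ and merely asserts, in a footnote, that the proof ``goes through'' for the set-restricted, totally disjoint setting. Your argument is a complete, self-contained verification of exactly that claim, and it handles correctly the two points where the extension is non-trivial. First, the domain constraints come for free, as you observe, because every replacement path stays inside $V(P_i)\subseteq U_i$. Second --- and this is the genuinely delicate point for the \emph{totally} disjoint variant --- the distinctness requirement blocks the naive single-edge shortcut when some $P_j$ already occupies the edge $s_it_i$; your chordality argument (a shortest cycle through the edge $s_it_i$ in the chordal graph $G[V(P_i)]$ must be a triangle, since any chord of a longer such cycle yields a strictly shorter cycle still through that edge) produces the length-two replacement, and the key bookkeeping step is exactly right: its midpoint $v$ lies in $V(P_i)$, whereas the internal vertex of any competing length-two path $P_{j'}$ with the same endpoints cannot lie in $V(P_i)$ by the internal disjointness of the original solution, so no collision or identity of paths can arise. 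One cosmetic omission: the lemma asserts \emph{precisely} one alternative holds, and you only prove that at least one does; but exclusivity is immediate, since in the second alternative the edge $s_it_i$ exists, which makes a length-two $P_i$ non-induced. With that one-line remark added, your proof fully substantiates the footnote that the paper leaves unargued.
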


By the second part of the previous lemma we know that in each minimum solution,
for each heavy edge $e$, there is a collection of paths of length at most $2$ satisfying $e$.
Hence, to accommodate the heavy edges in our marking scheme, it is enough 
to consider common neighbors of the endpoints of heavy edges.
If our input graph $G$ is a well-partitioned chordal graph, this gives us a lot 
of structure we can exploit.

We adapt the marking procedure of \SRDP{} algorithm as follows.
\begin{lemma}\label{lem:srtdp:marking}
	Let $(G, k, \terminals, \domains)$ be an instance of 
	\SRTDP{},
	such that $G$ is a well-partitioned chordal graph with partition tree $\calT$,
	after exhaustive application of Reduction~\ref{red:srtdp:terminal:edge}.
	Then, there is an $\calO(k^2 \cdot n)$ time algorithm that computes sets $M_1, \ldots, M_k \subseteq V(G)$
	such that for all $i \in [k]$, $M_i \subseteq U_i$, and the following hold. Let $I$ denote the light indices.
	\begin{itemize}
		\item For each heavy index $i \in [k] \setminus I$, $\card{M_i} \le 2k$.
		\item For each bag $B \in V(\calT)$, and each light index $i \in I$, $\card{B \cap M_i} \le 4k$.
		\item Let $\calT'$ be the forest in $\calT$ induced by all bags with a nonempty intersection
			with $\bigcup_{i \in I} M_i$.
			Then, $\calT'$ has at most $2\cdot \card{I}$ bags of degree one.
		\item $(G, k, \terminals, \domains)$ is a \yes-instance if and only if there exists 
		a minimal solution $(P_1, \ldots, P_k)$ such that for each $i \in [k]$, $V(P_i) \subseteq M_i \cup \{s_i, t_i\}$.
	\end{itemize}
\end{lemma}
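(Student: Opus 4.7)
The plan is to extend the marking procedure of Lemma~\ref{lem:srdp:marking} by treating light and heavy indices separately, exploiting the structural description of minimum solutions given by Lemma~\ref{lem:srtdp:minimum}. For each light index $i \in I$, I apply the marking of Lemma~\ref{lem:srdp:marking} verbatim: walking along the path $B_1 B_2 \ldots B_\ell$ in $\calT$ from the bag containing $s_i$ to that containing $t_i$, for each $j \in [\ell-1]$ I add to $M_i$ a maximal subset of $U_i \cap \bdd{B_j}{B_{j+1}}$ of size at most $2k$ and a maximal subset of $U_i \cap \bdd{B_{j+1}}{B_j}$ of size at most $2k$. This yields the second bullet (at most $4k$ per bag per light index) and the third bullet (the forest $\calT'$ is the union of $\card{I}$ subtree-paths and hence has at most $2\card{I}$ leaves), exactly as in Lemma~\ref{lem:srdp:marking}.

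For each heavy index $i \in [k]\setminus I$, I let $M_i$ be an arbitrary subset of $U_i \cap N_G(s_i) \cap N_G(t_i)$ of size at most $2k$. This gives the first bullet and adds only $\cO(k \cdot n)$ to the running time, keeping the total at $\cO(k^2\cdot n)$. By Remark~\ref{remark:terminals:domains}, the heavy markings never touch any other terminal, and they play no role in $\calT'$ since $\calT'$ is defined only over light indices, so the first three bullets are preserved.

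The nontrivial direction of the fourth bullet is the forward one. Given a \yes{}-instance, I invoke Lemma~\ref{lem:srtdp:minimum} to fix a minimum solution $(P_1,\ldots,P_k)$ in which every light $P_i$ is induced and every heavy $P_i$ is either the edge $s_it_i$ or a length-two path $s_i z_i t_i$ with $z_i \in U_i \cap N_G(s_i)\cap N_G(t_i)$. I then perform two rearrangement stages. First, for each heavy index $i$ with $z_i \notin M_i$: since $\card{M_i} = 2k$ while only at most $2(k-1)$ vertices can be used internally by the remaining $k-1$ paths of the solution, some $z_i' \in M_i$ is free, and we replace $z_i$ by $z_i'$. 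Second, for each light index $i$, I apply the bag-by-bag twin-exchange argument from Lemma~\ref{lem:srdp:marking}: whenever $P_i$ enters a bag $B$ through an unmarked boundary vertex, we exchange it for an unused marked twin from $M_i \cap \bdd{B}{C}$, which exists because $2k$ such vertices are marked and at most $2(k-1)$ can be blocked by competitors.

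The main obstacle will be verifying that the two stages coexist peacefully: a common neighbor chosen for a heavy index could in principle be marked for some light $M_j$ or lie on a light path. This is resolved uniformly by the ``$2k$ marked, at most $2(k-1)$ blockers'' counting, which lets us execute both stages in a single sweep by always reserving each replacement vertex and forbidding later swaps from touching it; the assumption $V(\terminals)\cap (U_j\setminus\{s_j,t_j\})=\emptyset$ from Remark~\ref{remark:terminals:domains} ensures that no reservation collides with a terminal of another pair. The backward direction is immediate from the construction, completing the proof.
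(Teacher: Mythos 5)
Your proposal takes essentially the same route as the paper: light indices are marked exactly as in Lemma~\ref{lem:srdp:marking}, your heavy-index set $U_i \cap N_G(s_i) \cap N_G(t_i)$ is precisely the region the paper carves out in its two structural cases for a heavy edge (terminals in boundaries of adjacent bags, or together in one bag $B$, where the common neighborhood is $(B\setminus\{s_i,t_i\}) \cup \bigcup_{C \in N_\calT(B),\, \{s_i,t_i\}\subseteq \bdd{B}{C}} \bdd{C}{B}$), and your replacement argument via Lemma~\ref{lem:srtdp:minimum} is exactly what the paper compresses into ``we can argue in the same way as in the proof of Lemma~\ref{lem:srdp:marking}.'' Two points of precision: ``arbitrary subset'' must be ``maximal subset of size at most $2k$'', since your counting step ($z_i \notin M_i$ forces $\card{M_i} = 2k$) relies on maximality; and note that your ``at most $2(k-1)$ blockers'' bound is airtight only when $M_i$ lies inside a clique (which holds in the adjacent-bags case, where the two mutually complete boundaries form a clique), whereas in the single-bag case the marked region spans $B$ plus boundaries of several neighboring bags and an induced competing path can meet it in up to four vertices (one in each of two neighbor boundaries plus two in $B$) --- an imprecision your write-up shares with the paper's own one-line justification, and which you would need to patch, e.g.\ by a per-bag budget, to make the sweep fully rigorous.
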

\begin{proof}
	For each light index, we proceed as in Lemma~\ref{lem:srdp:marking}.
	Let $xy$ be a heavy edge. 
	There are two cases we need to consider.
	First, if there are bags $B_1, B_2 \in V(\calT)$ such that 
	$x \in \bdd{B_1}{B_2}$ and $y \in \bdd{B_2}{B_1}$, then the only vertices that
	can be used to form a length-$2$ $(x, y)$-path are in 
	$\bdd{B_1}{B_2} \cup \bdd{B_2}{B_1}$.
	Hence, for each index $i$ weighing down on $xy$,
	we let $M_i$ be a maximal subset of $U_i \cap (\bdd{B_1}{B_2} \cup \bdd{B_2}{B_1})$ of size at most $2k$.
	Second, there is a bag $B$ such that $\{x, y\} \subseteq B$.
	Then, the remaining vertex on a path corresponding to the terminal pair $(x, y)$ 
	has to be contained in $B$ or in a neighbor 
	bag $C$ of $B$ such that $x$ and $y$ are both in the boundary of $B$ to $C$.
	Therefore, we let $M_i$ be a maximal subset of 
	$$U_i \cap \left(B \cup \bigcup\nolimits_{C \in N_\calT(B), \{x, y\} \subseteq \bdd{B}{C}} \bdd{C}{B}\right)$$
	of size at most $2k$.
	We can argue in the same way as in the proof of Lemma~\ref{lem:srdp:marking} that this is correct.
\end{proof}

\begin{theorem}\label{thm:srtdp}
	There is an algorithm that solves each instance $(G, k, \terminals, \domains)$ of \SRTDP{}
	where $G$ is a well-partitioned chordal graph given along with a partition tree $\calT$,
	in time $2^{\calO(k\log k)} \cdot n$.
\end{theorem}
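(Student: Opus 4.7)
The plan is to follow the three-step template of Theorem~\ref{thm:srdp}, but with the feasibility check augmented so that heavy edges are handled locally, in the bag(s) where their endpoints live. Concretely, the algorithm first applies Reduction~\ref{red:srtdp:terminal:edge} exhaustively, then invokes Lemma~\ref{lem:srtdp:marking} to produce the marked sets $M_1,\ldots,M_k$, and finally verifies, at every bag of $\calT$, a strengthened feasibility condition described below. Correctness will rely on Lemma~\ref{lem:srtdp:minimum}: after Reduction~\ref{red:srtdp:terminal:edge}, every index $i$ is either light (in which case $s_it_i\notin E(G)$ and $P_i$ can be taken induced), or heavy (in which case $s_it_i\in E(G)$ and $P_i$ has length at most $2$).

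The key step is to refine Definition~\ref{def:feasible}. Let $I\subseteq[k]$ be the set of light indices. For light indices we use exactly the condition of Definition~\ref{def:feasible} w.r.t.\ $\{M_i\mid i\in I\}$, which certifies that induced paths for the light terminal pairs can be threaded through~$\calT$ as in Lemma~\ref{lem:srdp:feasible}. For heavy indices we localize the check as follows. Let $e=xy$ be a heavy edge with $I(e)=\{i_1,\ldots,i_w\}$. If $x$ and $y$ lie in the same bag $B$, then at $B$ we additionally require that one can choose one distinguished index $a\in I(e)$ (whose path will be the edge $xy$ itself) and, for each other $i_b\in I(e)$, a vertex $m_{i_b}\in M_{i_b}$ lying either in $B\setminus\{x,y\}$ or in some bag $C\in N_\calT(B)$ with $\{x,y\}\subseteq\bdd{B}{C}$, the vertices $m_{i_b}$ being pairwise distinct and disjoint from the vertices picked for the light-index check at $B$ and at the relevant neighbor bags. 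The case where $x\in\bdd{B_1}{B_2}$ and $y\in\bdd{B_2}{B_1}$ is analogous, with the $w-1$ length-$2$ paths drawing their midpoints from $\bdd{B_1}{B_2}\cup\bdd{B_2}{B_1}$. Lemma~\ref{lem:srtdp:marking} guarantees that all candidates for midpoints are already in $M_{i_b}$, so this extended test remains local to a constant number of bags per heavy edge.

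The main obstacle is proving an analogue of Lemma~\ref{lem:srdp:feasible} for this extended notion, i.e.\ that local satisfaction at every bag glues to a global totally-disjoint solution. The light-index direction is handled exactly as in Lemma~\ref{lem:srdp:feasible}. For heavy indices one uses Lemma~\ref{lem:srtdp:minimum} on the forward direction: in a minimum solution each heavy pair is satisfied by edges/length-$2$ paths whose midpoints live in the localized regions described above, and by the same replacement argument as in Lemma~\ref{lem:srdp:marking} we can assume these midpoints lie in the $M_{i_b}$'s. For the backward direction, the extended feasibility conditions at the bags around each heavy edge produce pairwise distinct midpoints, and by construction these midpoints are disjoint from the vertices chosen for light paths at the same bag, yielding pairwise distinct and internally vertex-disjoint paths overall.

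For the running time, Reduction~\ref{red:srtdp:terminal:edge} can be exhausted in $\calO(k\cdot n)$, and Lemma~\ref{lem:srtdp:marking} supplies the sets $M_1,\ldots,M_k$ in time $\calO(k^2\cdot n)$. For each bag $B\in V(\calT)$ the extended feasibility test enumerates subsets of $B\cap\bigcup_{i\in I}M_i$ together with a domain-preserving labeling into $[k]$, which by Lemma~\ref{lem:srtdp:marking} has size $\calO(k^2)$ and hence contributes a $2^{\calO(k\log k)}$ factor; the heavy-edge portion at $B$ enumerates, for each heavy edge whose endpoints meet $B$, the distinguished index in $I(e)$ and a choice of midpoint for each of the remaining indices from a pool of size $\calO(k)$, contributing another $2^{\calO(k\log k)}$ factor across the at most $k$ heavy edges touching $B$. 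Summing over the at most $n$ bags yields the claimed $2^{\calO(k\log k)}\cdot n$ bound and completes the proof.
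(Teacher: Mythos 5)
Your skeleton (Steps 1--2, and handling the light indices via $I$-feasibility exactly as in Theorem~\ref{thm:srdp}) matches the paper, but your treatment of the heavy indices has a genuine gap. You replace the paper's coordination mechanism with a \emph{per-bag local} check in which the heavy-edge midpoints chosen at a bag $B$ are required to be ``disjoint from the vertices picked for the light-index check at $B$ and at the relevant neighbor bags.'' This breaks the one property that makes per-bag feasibility checking sound: independence of the bags' choices. The soundness of the gluing in Lemma~\ref{lem:srdp:feasible} rests on the fact that each bag's witness $(X_B,\labeling_B)$ is chosen in isolation, and no vertex lies in two bags. In your extended check, the condition at $B$ quantifies over hypothetical light choices at a neighbor bag $C$, while $C$'s own feasibility check quantifies over its witness separately; nothing forces these to agree. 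Concretely, $B$'s heavy check may certify a midpoint $m\in\bdd{C}{B}$ assuming $m\notin X_C$, while the actual witness $X_C$ produced by $C$'s check (the one used to build the light paths in the gluing) contains $m$ with a light label --- the glued ``solution'' then reuses $m$ as both a heavy midpoint and an internal vertex of a light path. For the same reason, your condition as written does not enforce disjointness of midpoints across \emph{different} heavy edges whose candidate regions overlap (two heavy edges in the same bag, or in adjacent bags sharing boundary vertices). Repairing this locally would require guessing, for each heavy edge, how its $w-1$ midpoints are distributed over the bags of its region and reserving them consistently in every affected bag's check --- which is no longer a local test.

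The paper sidesteps all of this with a global guess: it enumerates every subset $Y\subseteq\bigcup_{j\in J}M_j$ of size at most $\card{J}$ (where $J$ is the set of heavy indices), checks directly whether $Y$ satisfies all heavy edges, and only then runs the ordinary per-bag $I$-feasibility check with the light marked sets shrunk to $M_i'\defeq M_i\setminus Y$. Removing $Y$ from the light sets \emph{before} the bag checks is what guarantees, without any cross-bag coordination, that light paths and heavy midpoints cannot collide; and choosing $Y$ as a single set guarantees distinctness across heavy edges. This global enumeration is affordable precisely because of the first bullet of Lemma~\ref{lem:srtdp:marking}: for each heavy index $i$, $\card{M_i}\le 2k$ \emph{globally} (not merely per bag), so $\card{\bigcup_{j\in J}M_j}=\calO(k^2)$ and there are only $2^{\calO(k\log k)}$ candidates for $Y$. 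Your proposal never exploits this global bound --- you only use the per-bag bounds --- and without it (or an equivalent coordination device) the claim that local satisfaction ``glues to a global totally-disjoint solution'' is unproven. Your forward direction (via Lemma~\ref{lem:srtdp:minimum} and the replacement argument) and your runtime accounting are otherwise in order; the fix is to adopt the guess-$Y$-then-check structure rather than a purely local heavy-edge test.
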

\begin{proof}
	Let $I \subseteq [k]$ be the light indices and $J \subseteq [k]$ the heavy indices.
	The algorithm works almost the same way as the one for \SRDP{}, with an additional guessing stage of the
	vertices satisfying the heavy edges.
	\begin{description}
		\item[Step 1.] Apply Reduction~\ref{red:srtdp:terminal:edge} exhaustively.
		\item[Step 2.] Use Lemma~\ref{lem:srtdp:marking} to obtain the sets $M_1, \ldots, M_k$ of marked vertices.
		\item[Step 3.] For each subset $Y \subseteq \bigcup_{j \in J} M_j$ of size at most $\card{J}$
			check if the vertices in $Y$ can be used to satisfy all heavy edges.
		\item[Step 4.] If Step 3 succeeded for the set $Y$, then for all $i \in I$, let $M_i' \defeq M_i \setminus Y$, and
			check for each bag $B \in V(\calT)$ if it is $I$-feasible w.r.t.~$\{M_i' \mid i \in I\}$. 
	\end{description}
	Correctness of this algorithm follows in a similar way as in Theorem~\ref{thm:srdp}.
	By Lemma~\ref{lem:srtdp:marking}, if $(G, k, \terminals, \domains)$ is a \yes-instance,
	then there is a solution only using vertices from $M_1, \ldots, M_k$. 
	Therefore, if the instance has a solution, then one of the guesses in Step 3 must succeed,
	in such a way that all bags are $I$-feasible with respect to $\{M_i' \mid i \in I\}$.
	(Recall that $I$ are the light indices and that they can be handled in the same way as in the algorithm of Theorem~\ref{thm:srdp})
	
	By Lemma~\ref{lem:srtdp:marking}, $\card{\cup_{j \in J} M_j} = \calO(k^2)$, 
	so there are $2^{\calO(k \log k)}$ choices to try in Step 3.
	The runtime for Step 4 is again $2^{\calO(k \log k)} \cdot n$,
	therefore the runtime is $2^{\calO(k \log k)} \cdot n$.
\end{proof}

\newcommand\terminalsets{\calS}
\subsection{Set-Restricted Disjoint Connected Subgraphs}\label{sec:alg:srdcs}
In this section we sketch how to solve another related problem, \SRDCS{} that was also
introduced in~\cite{patterns2014}.
For convenience, we now require the solution to be pairwise \emph{vertex-disjoint}.
However, the variants of the problem that ask for internally vertex-disjoint solutions 
can be handled by similar methods used in the previous sections.

\fancyparproblemdef
	{Set-Restricted Disjoint Connected Subgraphs}
	{A graph $G$, a set $\terminalsets = \{S_1, \ldots, S_k\}$ of pairwise disjoint vertex sets $G$ called \emph{terminal sets},
	a set $\mathcal{U} = \{U_1, \ldots, U_k\}$ of $k$ vertex subsets of $G$, called \emph{domains}.}
	{$s \defeq \sum_{i \in [k]}\card{S_i}$}
	{Does $G$ contain $k$ pairwise vertex-disjoint connected subgraphs $F_1, \ldots, F_k$ such that 
			for $i \in [k]$, $S_i \subseteq V(F_i) \subseteq U_i$?
	}
	
Let $(G, k, \terminalsets, \domains)$ be an instance of \SRDCS{} such that $G$ is a well-partitioned chordal graph
with partition tree $\calT$.
Similarly to before, it is not difficult to see that if $(G, k, \terminalsets, \domains)$ is a \yes-instance, then
it has a solution that uses at most $s$ non-terminal vertices from each bag $B$. 
To adapt the marking procedure for this problem, for each $i \in [k]$, 
let $\calT_i$ be the smallest subtree of $\calT$ that contains all
bags that have a non-empty intersection with $S_i$.
For each bag $B \in V(\calT_i)$, and each $C \in N_{\calT_i}(B)$, 
we mark a maximal subset of $\bdd{B}{C}$ of size at most $s$.
Let the marked vertices for $i$, $M_i$, be the union of all these sets.

Then, if $(G, k, \terminalsets, \domains)$ is a \yes-instance, 
there is a solution such that each $F_i$ uses only vertices from $M_i$.
We can now define a notion of feasibility based on the subtrees $\calT_i$, in analogy with Definition~\ref{def:feasible}.
Then, checking if each bag is feasible again solves the problem.
Since each bag contains at most $\calO(s^2)$ marked vertices, we obtain the following runtime bound.
\begin{theorem}
	There is an algorithm that solves each instance $(G, k, \terminals, \domains)$ of \SRDCS{}
	where $G$ is a well-partitioned chordal graph given along with a partition tree $\calT$,
	in time $2^{\calO(s\log s)} \cdot n$.
\end{theorem}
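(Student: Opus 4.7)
The plan is to adapt the marking-and-feasibility framework from the \SRDP{} algorithm (Theorem~\ref{thm:srdp}) to connected subgraphs. First I would prove a structural minimum-solution lemma: if $(F_1, \ldots, F_k)$ is a solution minimizing $\sum_i \card{V(F_i)}$, then for each bag $B$ and each index $i$ we have $\card{V(F_i) \cap B} \leq \card{S_i \cap B} + \deg_{\calT_i}(B)$, where $\calT_i$ denotes the smallest subtree of $\calT$ containing the bags that meet $S_i$. The argument is that inside the clique $B$, the restriction $F_i \cap B$ can be shrunk until it contains only the terminals $S_i \cap B$ together with one vertex in each boundary $\bdd{B}{C}$ to a neighbor $C \in N_{\calT_i}(B)$; connectivity across edges of $\calT_i$ is preserved because $\bdd{B}{C}$ is complete to $\bdd{C}{B}$, and connectivity inside $B$ is free.

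Next I would formalize the marking as sketched by the authors: for every $i$, every $B \in V(\calT_i)$, and every $C \in N_{\calT_i}(B)$, mark up to $s$ vertices of $U_i \cap \bdd{B}{C}$, and collect them into $M_i$. A twin-swap argument analogous to that in Lemma~\ref{lem:srdp:marking} then shows that we may assume $V(F_i) \subseteq M_i \cup S_i$ in some minimum solution: any unmarked boundary vertex used by $F_i$ can be exchanged for an unused marked twin in the same boundary, since at most $s - 1$ other solution vertices can occupy that boundary, and the two boundaries form a complete bipartite graph so all adjacencies are preserved.

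I would then introduce the analog of Definition~\ref{def:feasible}: a bag $B$ is \emph{feasible} if there exist pairwise disjoint sets $X_i \subseteq B \cap M_i$, indexed by those $i$ with $B \in V(\calT_i)$, such that $S_i \cap B \subseteq X_i$ and $X_i \cap \bdd{B}{C} \neq \emptyset$ for every $C \in N_{\calT_i}(B)$. The analog of Lemma~\ref{lem:srdp:feasible} then states that the instance is a \yes-instance if and only if every bag of $\calT$ is feasible. For the forward direction, set $X_i(B) \defeq V(F_i) \cap B$ from a minimum solution; for the backward direction, define $F_i \defeq G[\bigcup_B X_i(B)]$, which is connected because for every edge $BC$ of $\calT_i$ the intersections $X_i(B) \cap \bdd{B}{C}$ and $X_i(C) \cap \bdd{C}{B}$ are nonempty and induce a complete bipartite subgraph, and which satisfies $S_i \subseteq V(F_i) \subseteq U_i$ by construction.

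The hard part will be controlling the per-bag runtime. The key combinatorial bound, matching the claimed $2^{\calO(s \log s)} \cdot n$ runtime, is that $\sum_i \deg_{\calT_i}(B) \leq s$ at every bag $B$: by minimality of $\calT_i$, each direction $C \in N_{\calT_i}(B)$ carries at least one vertex of $S_i$ in the component of $\calT_i - B$ lying beyond $C$, and these subsets of $S_i$ are pairwise disjoint, so $\deg_{\calT_i}(B) \leq \card{S_i}$ for every $i$ with $B \in V(\calT_i)$, and summing over $i$ yields $s$. Consequently $\card{B \cap \bigcup_i M_i} = \calO(s^2)$ while the total feasibility witness at $B$ has size at most $2s$, so feasibility of $B$ reduces to checking at most $\binom{\calO(s^2)}{2s} \cdot k^{2s} = 2^{\calO(s \log s)}$ labeled subsets. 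The main conceptual obstacle compared to the path problems is that within a single bag the subgraph $F_i$ may genuinely branch into several directions of $\calT_i$, so the local witness must track a multi-vertex set per index rather than just an entry/exit pair; the bound $\sum_i \deg_{\calT_i}(B) \leq s$ is precisely what prevents this branching from blowing up the enumeration.
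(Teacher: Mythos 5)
Your proposal is correct and follows essentially the same route as the paper, whose treatment of this theorem is only a sketch: the subtrees $\calT_i$, the per-boundary marking of up to $s$ vertices, and the bag-feasibility check are exactly the paper's plan. In fact you supply details the paper leaves implicit, most notably the bound $\sum_i \deg_{\calT_i}(B) \le s$ via minimality of the $\calT_i$, which is precisely what justifies the paper's assertion of $\calO(s^2)$ marked vertices per bag and hence the $2^{\calO(s \log s)} \cdot n$ runtime.
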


\subsection{Polynomial kernels on split graphs}\label{sec:kernels:split}
We observe that Lemmas~\ref{lem:srdp:marking} and~\ref{lem:srtdp:marking} imply polynomial
kernels of \SRDP{} and \SRTDP{} on split graphs.
\begin{corollary}\label{cor:srdp:split}
	The \SRDP{} and \SRTDP{} problems on split graphs admit kernels on $\cO(k^2)$ vertices.
\end{corollary}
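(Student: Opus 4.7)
The plan is to apply the marking procedures of Lemmas~\ref{lem:srdp:marking} and~\ref{lem:srtdp:marking} to the input split graph and restrict the instance to the marked vertices together with the terminals. Given a split graph $G$ with clique $C$ and independent set $I$, we view $G$ as a well-partitioned chordal graph whose partition tree $\calT$ consists of $C$ as a central bag and $\{u\}$ as a leaf bag for each $u \in I$. Note that for every terminal pair $(s_i, t_i)$ the path in $\calT$ between the bags containing $s_i$ and $t_i$ has length at most two, and each edge of that path is incident to the central bag $C$.

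First I would exhaustively apply Reduction~\ref{red:srdp:terminal:edge} in the case of \SRDP{}, or Reduction~\ref{red:srtdp:terminal:edge} in the case of \SRTDP{}, and then invoke the corresponding marking lemma to compute the sets $M_1, \ldots, M_k$. Write $M \defeq \bigcup_{i \in [k]} M_i$. By the first item of Lemma~\ref{lem:srdp:marking}, the central bag $C$ contains at most $4k$ vertices of each $M_i$, hence $|C \cap M| \le 4k^2$. By the second and third items, the subforest $\calT'$ induced by bags meeting $M$ has at most $2k$ degree-one bags, so in the star $\calT$ at most $2k$ of the leaf (singleton) bags intersect $M$. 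Adding the at most $2k$ terminal vertices, the total number of marked-or-terminal vertices is $\cO(k^2)$.

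The kernel instance is simply $G' \defeq G[M \cup V(\terminals)]$ together with the restricted domains $U_i \cap V(G')$. Equivalence follows directly from the last item of Lemma~\ref{lem:srdp:marking}: if the original instance admits a solution, then it admits one whose paths lie in $M_i \cup \{s_i, t_i\} \subseteq V(G')$, and conversely any solution to the kernel instance is also a solution to the original instance. Since induced subgraphs of split graphs are split graphs, $G'$ is still a split graph, giving the desired $\cO(k^2)$ kernel for \SRDP{}.

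For \SRTDP{}, Lemma~\ref{lem:srtdp:marking} additionally marks at most $2k$ vertices per heavy index in order to handle heavy edges, and there are at most $k$ heavy indices, contributing only $\cO(k^2)$ extra vertices. The remaining argument is identical, yielding an $\cO(k^2)$ kernel for \SRTDP{}. There is no real obstacle here; the main observation is simply that in the star partition tree of a split graph both the per-bag bound of $4k$ per index and the bound of $2k$ leaf bags meeting $M$ supplied by the generic marking lemmas translate directly into the vertex bound for the kernel.
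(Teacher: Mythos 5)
Your proof is correct and follows essentially the same route as the paper: view the split graph's partition tree as a star, invoke Lemmas~\ref{lem:srdp:marking} and~\ref{lem:srtdp:marking} to bound the center bag by $\cO(k^2)$ marked vertices and the relevant leaf bags by $2k$ (resp.\ $\cO(k^2)$ extra for heavy indices), and restrict to the induced subgraph on marked vertices and terminals. Your version is in fact slightly more explicit than the paper's terse argument, but there is no substantive difference in approach.
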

\begin{proof}
	Let $(G, k, \terminals, \domains)$ be an instance of \SRDP{} such that $G$ is a split graph,
	with partition tree $\calT$. Note that $\calT$ is a star, and assume that $\calT$ is rooted at 
	the center. Then, all leaves of this star contain only one vertex.
	Moreover, by Lemma~\ref{lem:srdp:marking}, we can reduce the number of vertices in the center 
	bag to $\cO(k^2)$, and we can remove all but $2k$ leaves, without changing the answer to the instance.
	
	Now suppose that $(G, k, \terminals, \domains)$ is an instance of \SRTDP{}, and let $\calT$ be as above.
	By Lemma~\ref{lem:srtdp:marking}, we can again reduce the number of vertices in the center bag
	to $\calO(k^2)$, and remove all but $\cO(k^2)$ leaves, without changing the answer to the instance.
\end{proof}

\subsection{Polynomial kernel for \probDP{} on well-partitioned chordal graphs}\label{sec:kernels:wpc}
We first study the variant of the problem \probDP{} as defined in the introduction, i.e.\ the problem that asks for 
pairwise \emph{internally} vertex-disjoint paths between the terminals.
Later, we consider a polynomial kernel for \probTDP{}, which asks for 
pairwise distinct and internally vertex-disjoint paths.

Towards such a kernel, let $(G, k, \terminals)$ be an instance of \probDP{} such that 
$G$ is a well-partitioned chordal graph with partition tree $\calT$, 
and let $M_1, \ldots, M_k \subseteq V(G)$ be the set of marked vertices due to 
Lemma~\ref{lem:srdp:marking}.\footnote{Where for each $i \in [k]$, we let $U_i = V(G)$.}
Let $\calT'$ be the forest in $\calT$ given by Lemma~\ref{lem:srdp:marking}.
If $(G, k, \terminals)$ is a \yes-instance, then there is a solution only using vertices 
that are contained in bags of $\calT'$.
We can therefore remove all vertices in bags $V(\calT) \setminus V(\calT')$, and continue working 
with the subgraph of $G$ induced by the bags in $\calT'$.
\begin{observation}\label{obs:kernel:dp:leaves}
	Let $(G, k, \terminals)$ be an instance of \probDP{} such that each connected component
	of $G$ is a well-partitioned chordal graph. 
	We may assume that the forest $\calF$, consisting of the partition trees of the connected components of $G$,
	has at most $2k$ bags of degree one.
\end{observation}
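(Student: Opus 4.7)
The plan is to invoke Lemma~\ref{lem:srdp:marking} with trivial domains $U_i \defeq V(G)$ for every $i \in [k]$, after first rejecting any instance in which some pair $s_i, t_i$ lies in different connected components of $G$ (such an instance is trivially a \no-instance). The lemma will produce marked sets $M_1,\dots,M_k$ together with the subforest $\calT'$ of the forest of partition trees of $G$, induced by those bags that meet $\bigcup_{i\in[k]} M_i$, and it gives two properties I will exploit: first, every \yes-instance admits a solution whose paths use only vertices contained in bags of $\calT'$; and second, $\calT'$ has at most $2k$ bags of degree one.

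Using the first property, I would delete from $G$ all vertices that lie in bags of $V(\calF)\setminus V(\calT')$; by the lemma this is a safe reduction. Since well-partitioned chordal graphs are closed under taking induced subgraphs, the resulting graph $G'$ is still a disjoint union of such graphs, and its forest of partition trees is exactly $\calT'$: every surviving bag is untouched and still induces a clique, the bipartite structure between adjacent surviving bags is preserved, and removing an entire bag from a partition tree merely splits its tree into the subtrees hanging off of that bag. Applying the second property of Lemma~\ref{lem:srdp:marking} then immediately yields the desired bound of at most $2k$ leaf bags in $\calF = \calT'$.

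There is no genuine obstacle here; the only bookkeeping is to confirm that $\calT'$ genuinely serves as the partition-tree forest of $G'$, which holds because we remove whole bags rather than arbitrary vertex subsets of bags. One small point to mention is that although Lemma~\ref{lem:srdp:marking} is phrased for a single well-partitioned chordal graph with a fixed partition tree $\calT$, the argument extends to the componentwise setting without modification: the marking procedure for each terminal pair $(s_i, t_i)$ only traverses the unique path in $\calT$ between the bag containing $s_i$ and the bag containing $t_i$, which after our preliminary check lives entirely within one component's partition tree.
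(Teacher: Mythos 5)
Your proposal is correct and takes essentially the same route as the paper: the paper likewise instantiates Lemma~\ref{lem:srdp:marking} with $U_i = V(G)$ for all $i \in [k]$, deletes all vertices lying in bags of $V(\calT) \setminus V(\calT')$, and reads off the bound of at most $2k$ degree-one bags from the second property of that lemma. Your additional bookkeeping (rejecting instances with terminal pairs in different components, and verifying that $\calT'$ is genuinely the partition forest of the reduced graph because whole bags are removed) is sound and consistent with the paper's standing connectivity assumption from Remark~\ref{remark:terminals:domains}.
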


For convenience, we call $\calF$ as in the previous definition a \emph{partition forest}.
Suppose from now on that $(G, k, \terminals)$ is as asserted by Observation~\ref{obs:kernel:dp:leaves}.
Unless the number of vertices in $G$ is polynomial in $k$, $\calF$ contains many bags that are of degree two, 
and contain no terminal. 
We now introduce two new reduction rules that show that either we can conclude that
we are dealing with a \no-instance, or we can remove them.
\begin{reduction}\label{red:small:boundary}
	Let $(G, k, \terminals)$ be an instance of \probDP{} such that 
	the components of $G$ are well-partitioned chordal with partition forest $\calF$.
	Let $B \in V(\calT)$ such that $B \cap V(\terminals) = \emptyset$ and $\deg_\calF(B) = 2$. 
	Let $A$ and $C$ be the two neighbors of $B$ in $\calF$.
	Let $I \subseteq [k]$ be the indices such that $B$ lies on the path of the bag containing $s_i$ to the bag containing $t_i$.
	If $\card{\bdd{B}{A}} < \card{I}$ or $\card{\bdd{B}{C}} < \card{I}$,
	then reduce $(G, k, \terminals)$ to a trivial \no-instance.
\end{reduction}

\begin{lemma}\label{lem:small:boundary}
	If the conditions of Reduction~\ref{red:small:boundary} are satisfied, then $(G, k, \terminals)$ is a \no-instance.
\end{lemma}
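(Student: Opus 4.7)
The plan is to show that the bag $B$ acts as a separator between its two sides in the partition forest, and then use the structural restriction that edges leaving $B$ only go through its boundaries to obtain a capacity bottleneck.

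First, I would observe that $B$ is a vertex separator in $G$ in the following sense: let $\calF_A$ and $\calF_C$ be the two components of $\calF - B$ that contain $A$ and $C$ respectively, and let $V_A \defeq \bigcup_{X \in V(\calF_A)} X$ and $V_C \defeq \bigcup_{X \in V(\calF_C)} X$. By item~\ref{def:wp:tree:nbh} of the definition of well-partitioned chordal graphs, there is no edge in $G$ between $V_A$ and $V_C$, and more specifically, the only edges from $V_A$ to $B$ are those between $\bdd{A}{B}$ and $\bdd{B}{A}$, and symmetrically for $C$. In particular, every edge between $V_A$ and $B$ has its endpoint in $B$ lying in $\bdd{B}{A}$, and every edge between $V_C$ and $B$ has its endpoint in $B$ lying in $\bdd{B}{C}$.

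Next, I would fix $i \in I$ and consider any $(s_i, t_i)$-path $P_i$. Since $B$ lies on the path in $\calF$ from the bag containing $s_i$ to the bag containing $t_i$, one of $\{s_i,t_i\}$ lies in $V_A$ and the other in $V_C$; hence $P_i$ must use at least one vertex of $B$ as an internal vertex (recall $B \cap V(\terminals) = \emptyset$). Walking along $P_i$ from its $V_A$-endpoint, the first vertex entered in $B$ must be in $\bdd{B}{A}$, and the last vertex of $B$ visited before reaching $V_C$ must be in $\bdd{B}{C}$; let these be $a_i$ and $c_i$, respectively (possibly $a_i = c_i$).

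Now suppose for contradiction that $(G,k,\terminals)$ admits a solution $(P_1,\ldots,P_k)$. Since the paths are pairwise internally vertex-disjoint and all vertices of $B$ used by the $P_i$ are internal vertices, the collection $\{a_i : i \in I\}$ consists of $\card{I}$ pairwise distinct vertices of $\bdd{B}{A}$, and similarly $\{c_i : i \in I\}$ consists of $\card{I}$ pairwise distinct vertices of $\bdd{B}{C}$. This forces $\card{\bdd{B}{A}} \ge \card{I}$ and $\card{\bdd{B}{C}} \ge \card{I}$, contradicting the hypothesis. Hence no solution exists.

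There is no real obstacle here; the only subtle point is making sure that the boundary vertices used by different paths must be distinct, which relies on $B$ containing no terminals so that every vertex of $B$ on any path is strictly internal, combined with the fact that edges from $B$ to the rest of $G$ are confined to the two boundaries by items~\ref{def:wp:tree:parent} and~\ref{def:wp:tree:nbh} of the partition tree definition.
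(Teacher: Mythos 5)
Your proof is correct and follows essentially the same route as the paper's: both argue that each path $P_i$ with $i \in I$ must pass through $\bdd{B}{A}$ and $\bdd{B}{C}$, and then apply the pigeonhole principle to the smaller boundary. Your write-up merely makes explicit the details the paper leaves implicit, namely that $B$ separates $V_A$ from $V_C$ by item~\ref{def:wp:tree:nbh}, and that the boundary vertices used by distinct paths must themselves be distinct because $B \cap V(\terminals) = \emptyset$ forces them to be internal vertices.
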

\begin{proof}
	For each $i \in I$, the intersection of the vertices of the path $P_i$ in a solution with $B$ must be nonempty.
	Moreover, $P_i$ must use a vertex from both $\bdd{B}{C}$ and $\bdd{B}{A}$ (which could be the same vertex).
	However, by the pigeonhole principle, there cannot be a solution to $(G, k, \terminals)$ 
	since at least one of $\bdd{B}{A}$ and $\bdd{B}{C}$ has less than $\card{I}$ vertices.
\end{proof}

We now show the orthogonal to Reduction~\ref{red:small:boundary}: if both boundaries of $B$ have at least $\card{I}$ vertices, then we 
can remove $B$ and all its vertices.
Note that the statement of the following reduction also tells us how to obtain a partition tree of the reduced graph.
\begin{reduction}\label{red:large:boundary}
	Let $(G, k, \terminals)$ be an instance of \probDP{} such that 
	the components of $G$ are well-partitioned chordal with partition forest $\calF$.
	Let $B \in V(\calT)$ such that $B \cap V(\terminals) = \emptyset$ and $\deg_\calF(B) = 2$. 
	Let $A$ and $C$ be the two neighbors of $B$ in $\calF$.
	Let $I \subseteq [k]$ be the indices such that $B$ lies on the path of the bag containing $s_i$ to the bag containing $t_i$.
	If $\card{\bdd{B}{A}} \ge \card{I}$ and $\card{\bdd{B}{C}} \ge \card{I}$, then reduce $(G, k, \terminals)$
	to $(G', k, \terminals)$, where $G'$ is obtained from $G$ by
	removing $B$ and making all vertices in $\bdd{A}{B}$ adjacent to all vertices in $\bdd{C}{B}$.
\end{reduction}

\begin{lemma}\label{lem:large:boundary}
	Reduction~\ref{red:large:boundary} is safe, i.e.\ under its stated conditions, $(G, k, \terminals)$ is a \yes-instance
	if and only if $(G', k, \terminals)$ is a \yes-instance.
\end{lemma}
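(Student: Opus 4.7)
The plan is to establish both directions of the biconditional separately, and essentially all the content sits in lifting a solution back from $G'$ to $G$. For the $(\Rightarrow)$ direction I will take a solution $(P_1, \ldots, P_k)$ in $G$, assume (by shortcutting) that each $P_i$ is an induced $(s_i, t_i)$-path, and invoke Observation~\ref{obs:srdp:intersection:bag} together with the tree structure to conclude that $V(P_i) \cap B \neq \emptyset$ if and only if $i \in I$. For $i \notin I$, $P_i$ already lives in $G'$. For $i \in I$, the induced path $P_i$ enters $B$ through a unique vertex $a_i \in \bdd{A}{B}$ and exits through a unique vertex $c_i \in \bdd{C}{B}$; I will replace its $B$-segment by the new edge $a_i c_i$ added in $G'$, and the resulting paths $P_i'$ remain pairwise internally disjoint because the edits occur on disjoint vertex sets.

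For the $(\Leftarrow)$ direction I will start from a minimal solution $(P_1', \ldots, P_k')$ in $G'$. Removing $B$ from $G$ splits $V(G) \setminus B$ into an $A$-side and a $C$-side, and the only edges of $G'$ not already present in $G - B$ go between $\bdd{A}{B}$ and $\bdd{C}{B}$. Hence $P_i'$ uses a new edge only when $i \in I$, and by minimality it uses exactly one such edge $a_i c_i$ with $a_i \in \bdd{A}{B}$ and $c_i \in \bdd{C}{B}$. Internal disjointness of the $P_i'$ ensures that the $a_i$'s are pairwise distinct and so are the $c_i$'s. To lift $P_i'$ back to $G$, I will replace each edge $a_i c_i$ by a length-two or length-three $(a_i, c_i)$-path whose internal vertices lie in a routing set $S_i \subseteq B$ with $S_i \cap \bdd{B}{A} \neq \emptyset$ and $S_i \cap \bdd{B}{C} \neq \emptyset$; such a path exists because $B$ is a clique and the boundary adjacencies across $AB$ and $BC$ are complete bipartite.

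The main obstacle is then to pick the routing sets $S_i$ pairwise disjoint so that the lifted paths remain internally disjoint. To handle this, let $Z \defeq \bdd{B}{A} \cap \bdd{B}{C}$: I will first assign singleton sets $\{z\} \subseteq Z$ to as many indices in $I$ as possible. If $|Z| \ge |I|$ this completes the assignment; otherwise, for each of the remaining $|I| - |Z|$ indices I will pick a pair $(x_i, y_i)$ with $x_i \in \bdd{B}{A} \setminus Z$ and $y_i \in \bdd{B}{C} \setminus Z$, using distinct $x_i$'s and distinct $y_i$'s. The hypotheses $|\bdd{B}{A}|, |\bdd{B}{C}| \ge |I|$ give $|\bdd{B}{A} \setminus Z|, |\bdd{B}{C} \setminus Z| \ge |I| - |Z|$, which is exactly what is required for this greedy choice to go through. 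Substituting each chosen routing path for the edge $a_i c_i$ in $P_i'$ then produces the desired internally disjoint solution in $G$.
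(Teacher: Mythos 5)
Your proof is correct and follows essentially the same route as the paper's: the forward direction contracts each path's segment through $B$ onto the new edge between $\bdd{A}{B}$ and $\bdd{C}{B}$, and the backward direction re-expands each such edge into a length-two or length-three path through $B$, using $\card{\bdd{B}{A}}, \card{\bdd{B}{C}} \ge \card{I}$ to choose the routing vertices. In fact, your greedy assignment via $Z = \bdd{B}{A} \cap \bdd{B}{C}$ (singletons from $Z$ first, then disjoint pairs from the complements) makes explicit a counting step that the paper compresses into ``we can assign such vertices to each of the paths''; your only inessential slip is the claim that internal disjointness forces the $a_i$'s to be pairwise distinct, which can fail when terminal pairs coincide, but nothing in your argument depends on it.
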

\begin{proof}
	Suppose $(G, k, \terminals)$ is a \yes-instance and let $P_1, \ldots, P_k$ be one of its solutions.
	For each $i \in I$, we observe that $P_i$ contains a vertex from $\bdd{A}{B}$, say $a_i$, 
	and a vertex from $\bdd{C}{B}$, say $c_i$, 
	such that both $a_i$ and $c_i$ have a neighbor from $B$ on the path $P_i$.
	Let $P_i'$ be the path obtained from $P_i$ by removing the vertices $V(P_i) \cap B$, 
	and making $a_i$ and $c_i$ adjacent. For each $j \in [k] \setminus I$, let $P_j' \defeq P_j$.
	It is not difficult to verify that $P_1', \ldots, P_i'$ is a solution to $(G', k, \terminals)$.
	
	Conversely, suppose that $(G', k, \terminals)$ is a \yes-instance, and let $P_1', \ldots, P_k'$ be one of its solutions,
	and let $\calF'$ be the partition forest of $G'$.
	Denote by $A$ and $C$ the bags in $\calF'$ that correspond to the neighbors of $B$ in $\calF$.
	Note that $A$ and $C$ are adjacent in $\calT'$ by construction.
	There is a pair of vertices $a_i \in \mathrm{bd}_{\calF'}(A, C)$, $c_i \in \mathrm{bd}_{\calF'}(C, A)$ 
	such that $P_i'$ contains the edge $a_i c_i$.
	Let $b^a_i \in \mathrm{bd}_{\calF}(B, A)$ and $b^c_i \in \mathrm{bd}_{\calF}(B, C)$ (possibly $b^a_i = b^c_i$). 
	Then, the path $P_i$ obtained from $P_i'$ by replacing the edge $a_i c_i$ with the path $a_i b^a_i b^c_i c_i$
	is an $(s_i, t_i)$-path in $G$.
	Moreover, since $\card{\bdd{B}{A}} \ge \card{I}$ and $\card{\bdd{B}{C}} \ge \card{I}$,
	we can assign such vertices to each of the paths $P_i'$, $i \in I$.
	Finally, for $j \in [k] \setminus I$, let $P_j \defeq P_j'$. 
	Then, $P_1, \ldots, P_k$ is a solution to $(G, k, \terminals)$.
\end{proof}

We wrap up.
\begin{theorem}\label{thm:dp:kernel}
	\probDP{} on well-partitioned chordal graphs parameterized by $k$ 
	admits a kernel on $\calO(k^3)$ vertices.
\end{theorem}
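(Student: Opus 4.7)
The plan is to combine the marking procedure of Lemma~\ref{lem:srdp:marking} (applied with full domains $U_i=V(G)$) with the two new reduction rules R.\ref{red:small:boundary} and R.\ref{red:large:boundary} in order to bound both the number of retained bags of the partition forest and the number of vertices inside each such bag. After first applying Reduction~\ref{red:srdp:terminal:edge} exhaustively so that no pair $(s_i,t_i)$ is realised by an edge of $G$, I would invoke Lemma~\ref{lem:srdp:marking} to compute the marked sets $M_1,\ldots,M_k$ and delete every vertex of $G$ that is neither in $\calM\defeq\bigcup_{i\in[k]}M_i$ nor a terminal. Since well-partitioned chordal graphs are closed under taking induced subgraphs, the resulting graph is still well-partitioned chordal, and by the third bullet of Lemma~\ref{lem:srdp:marking} the reduced instance is equivalent to the input. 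By the second bullet we now have a partition forest $\calF$ with at most $2k$ degree-one bags, and by the first bullet each bag contains at most $4k^2$ vertices from $\calM$ plus the at most $2k$ terminals it may host, giving $\calO(k^2)$ vertices per bag.

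The remaining issue is that $\calF$ may still contain arbitrarily long chains of degree-two bags that host no terminal, and Observation~\ref{obs:kernel:dp:leaves} alone does not bound these. Here the two boundary reductions come in: I would apply R.\ref{red:small:boundary} and R.\ref{red:large:boundary} exhaustively to every degree-two non-terminal bag $B$, using Lemmas~\ref{lem:small:boundary} and~\ref{lem:large:boundary} for correctness. Either the instance is recognised as a \no-instance, or every such $B$ is removed and its two neighbours $A,C$ become adjacent in the (updated) partition forest via the newly inserted complete bipartite graph between $\bdd{A}{B}$ and $\bdd{C}{B}$. Note that this keeps the graph well-partitioned chordal with the obvious partition forest, and does not increase the number of leaves of $\calF$, does not create new branching bags, and does not alter any other bag's vertex set, so the size bound on each remaining bag is preserved.

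After this second phase, every bag of the partition forest $\calF'$ either contains a terminal, is a leaf, or has degree at least three. A standard counting argument on forests shows that a forest with at most $\ell$ leaves has at most $\ell$ nodes of degree $\ge 3$, so the branching bags of $\calF'$ number at most $2k$. The leaves themselves are at most $2k$ by Observation~\ref{obs:kernel:dp:leaves}, and any degree-two or degree-zero bag must contain a terminal, contributing at most $2k$ further bags. Combining, $\card{V(\calF')}=\calO(k)$, and since each bag retains only $\calO(k^2)$ vertices, the total number of vertices of the kernelised instance is $\calO(k^3)$.

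The main obstacle I anticipate is the bookkeeping around interleaving the marking and the boundary reductions: one must verify that after applying R.\ref{red:large:boundary} the set of marked vertices continues to certify the ``if and only if'' condition of Lemma~\ref{lem:srdp:marking} on the new graph. The cleanest way I see is to perform the boundary reductions \emph{before} any deletion of unmarked vertices, because those reductions only remove or merge whole bags and preserve all $\bdd{\cdot}{\cdot}$ sets incident to retained bags; then recompute (or directly re-verify) the marking on the reduced graph. With this order of operations, the three bullets of Lemma~\ref{lem:srdp:marking} transfer verbatim to the final instance, and the $\calO(k^3)$ vertex bound follows as above.
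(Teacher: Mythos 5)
Your proposal is correct and follows essentially the same route as the paper: mark via Lemma~\ref{lem:srdp:marking} with $U_i = V(G)$, prune to the forest of Observation~\ref{obs:kernel:dp:leaves}, apply Reductions~\ref{red:small:boundary} and~\ref{red:large:boundary} exhaustively, and then count at most $2k$ leaves, at most $2k$ branching bags, and at most $2k$ terminal-containing degree-two bags, each of size $\calO(k^2)$. Your worry about interleaving the marking with the boundary reductions is unnecessary, since Lemma~\ref{lem:large:boundary} establishes safety of the bag removal directly (independently of the marking), which is why the paper simply applies the reductions after the pruning without re-verification.
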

\begin{proof}
	Let $(G, k, \terminals)$ be an instance of \probDP{} such that $G$ is well-partitioned chordal,
	and let $(G, k, \terminals)$ be as asserted by Observation~\ref{obs:kernel:dp:leaves}.
	Apply Reduction~\ref{red:small:boundary} exhaustively.
	If at some stage the reduction returned a trivial \no-instance, then we conclude that $(G, k, \terminals)$ is a \no-instance,
	which is correct by Lemma~\ref{lem:small:boundary}.
	Now apply Reduction~\ref{red:large:boundary} exhaustively, and reuse $(G, k, \terminals)$ to
	denote the resulting instance, and let $\calT$ be the partition tree of $G$.
	We count the number of bags in $\calT$.
	By Observation~\ref{obs:kernel:dp:leaves}, $\calT$ has at most $2k$ leaf nodes, therefore it also has at most $2k$
	internal nodes of degree at least $3$.
	Moreover, $\calT$ has at most $2k$ nodes of degree two -- by Reductions~\ref{red:small:boundary} and~\ref{red:large:boundary},
	the only degree two nodes in $\calT$ contain terminal vertices.
	Therefore, $\card{V(\calT)} = \calO(k)$, and since each bag contains at most $\calO(k^2)$ vertices
	by Lemma~\ref{lem:srdp:marking},
	$G$ contains at most $\calO(k^3)$ vertices.
\end{proof}

We now show that \probTDP{} also admits a polynomial kernel on well-partitioned chordal graphs.
Suppose we have an instance $(G, k, \terminals)$ of \probTDP{} such that $G$ is well-partitioned chordal with partition tree $\calT$.
We reduce $G$ as follows. 
We apply Lemma~\ref{lem:srtdp:marking}, and let $\calT'$ be the forest in $\calT$ that is described there.
(Note that the construction of $\calT'$ is based only on the \emph{light} indices.)
It is clear that in our kernel, it suffices to take $\calT'$, and the union of $M_i$ over all \emph{heavy} indices $i$.
We reduce $\calT'$ according to Reductions~\ref{red:small:boundary} and~\ref{red:large:boundary}.
If no trivial \no-instance was returned, then we return the resulting subgraph of $G$ as our kernelized instance.
We can apply roughly the same argument as given in the proof of Theorem~\ref{thm:dp:kernel}
to conclude that the subgraph of $G$ induced by vertices in the bags of $\calT'$ after reduction, has at most $\calO(k^3)$ vertices.
Since the number of vertices that have been marked for heavy edges is at most $\calO(k^2)$ in total,
we have the following theorem.
\begin{theorem}\label{thm:tdp:kernel}
	\probTDP{} on well-partitioned chordal graphs parameterized by $k$ 
	admits a kernel on $\calO(k^3)$ vertices.
\end{theorem}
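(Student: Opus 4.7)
My plan is to formalize the sketch preceding the theorem by combining the structural marking of Lemma~\ref{lem:srtdp:marking} with the two reduction rules developed in the proof of Theorem~\ref{thm:dp:kernel}, handling light and heavy indices separately. Given an instance $(G, k, \terminals)$ of \probTDP{} with $G$ well-partitioned chordal, I would first invoke Lemma~\ref{lem:srtdp:marking} (with $U_i \defeq V(G)$ for every $i$), yielding marked sets $M_1, \ldots, M_k$ together with the subforest $\calT'$ of the partition tree induced by the bags meeting $\bigcup_{i \in I} M_i$, where $I$ is the set of light indices. The relevant bounds from that lemma are: $\calT'$ has at most $2|I| \le 2k$ leaves, each bag of $\calT'$ contains at most $4k$ marked vertices per light index, and every heavy $|M_j|$ is at most $2k$, contributing $\calO(k^2)$ heavy-marked vertices in total.

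Next I would take the induced subgraph $G[W]$, where $W$ is the union of the bags of $\calT'$ with all heavy-marked vertices, and then apply Reductions~\ref{red:small:boundary} and~\ref{red:large:boundary} exhaustively along the partition forest of $G[W]$, returning a trivial \no-instance whenever Reduction~\ref{red:small:boundary} triggers. Both reductions transfer to \probTDP{}: Reduction~\ref{red:small:boundary} is a boundary-size pigeonhole argument independent of the distinctness requirement, while Reduction~\ref{red:large:boundary} goes through because the bijection in the proof of Lemma~\ref{lem:large:boundary} modifies each path only locally inside the contracted bag, preserving both internal vertex-disjointness and pairwise distinctness of the solution paths.

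Finally I would count vertices: after exhaustive reduction, every degree-two bag of the remaining partition forest must contain a terminal, so standard forest-counting gives $\calO(k)$ bags in total, to which we add $\calO(k)$ auxiliary bags coming from heavy-satisfying vertices. Each bag still holds $\calO(k^2)$ marked vertices for light indices, and the heavy-marked vertices contribute an additive $\calO(k^2)$, yielding the claimed $\calO(k^3)$ bound. The main obstacle I foresee is making the treatment of heavy-edge bags precise: one must check that the partition forest of $G[W]$ is well-defined when small auxiliary bags holding heavy-satisfying vertices are attached, and that Reduction~\ref{red:large:boundary} never removes a bag whose deletion would destroy a satisfying vertex needed by some solution. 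This last point is handled by observing that the endpoints of every heavy edge are terminals, so the bags containing them are exempt from both reductions.
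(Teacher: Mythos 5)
Your overall route coincides with the paper's: mark via Lemma~\ref{lem:srtdp:marking}, run Reductions~\ref{red:small:boundary} and~\ref{red:large:boundary} on the light-index structure, and count as in Theorem~\ref{thm:dp:kernel}, with the heavy-marked vertices contributing an additive $\calO(k^2)$. However, your patch for the heavy indices has a genuine gap. You claim that no reduction can delete a needed satisfying vertex ``because the endpoints of every heavy edge are terminals, so the bags containing them are exempt.'' That exempts the wrong bags. In the second marking case of Lemma~\ref{lem:srtdp:marking} --- a heavy edge $xy$ with $\{x,y\}$ contained in a single bag $B$ --- the marked satisfying vertices may lie in $\bdd{C}{B}$ for a neighbor bag $C$ of $B$ with $\{x,y\} \subseteq \bdd{B}{C}$. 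Such a $C$ need not contain any terminal and may have degree two in the partition forest, so it is eligible for Reduction~\ref{red:large:boundary}. Worse, the threshold $\card{I}$ in that reduction counts only indices whose $s_i$--$t_i$ bag-path passes through $C$; a heavy index with both terminals inside $B$ contributes nothing to $I$, so the pigeonhole and replacement arguments of Lemmas~\ref{lem:small:boundary} and~\ref{lem:large:boundary} give no protection to the heavy middles. Concretely, if essentially all common neighbors of $x$ and $y$ sit in $\bdd{C}{B}$ and $C$ passes the boundary-size test for its light traffic, then removing $C$ leaves as new common neighbors of $x$ and $y$ only $\bdd{E}{C}$ (where $E$ is the other neighbor of $C$), which may be a single vertex --- too few to satisfy a heavy edge of weight $w \ge 3$. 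A \yes-instance can thus be turned into a \no-instance by your version of the reduction.

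The repair, and what the paper actually does, is to exempt the heavy-marked \emph{vertices} rather than any bags: the reductions are applied to the forest $\calT'$ built from the light indices only, and the union of the sets $M_i$ over the heavy indices --- only $\calO(k^2)$ vertices in total --- is retained in the kernel unconditionally, even when the bag housing some of them is otherwise dissolved (this is also why the paper remarks that the kernelized graph may be disconnected). Since $\bdd{C}{B}$ is complete to $\bdd{B}{C}$, a retained vertex of $\bdd{C}{B}$ remains a common neighbor of $x$ and $y$ and still serves as the middle vertex of a length-two path, so the safety argument for the light indices is unaffected. Note that your alternative of exempting every bag that meets a heavy-marked set would not salvage the count: there can be $\Omega(k^2)$ such bags, each holding $\calO(k^2)$ light-marked vertices, which would only yield an $\calO(k^4)$ bound instead of the claimed $\calO(k^3)$.
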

Note that in resulting kernelized instance, the graph may not be connected. 
However, it is not difficult to see that with a slight modification of the above described procedure,
one can obtain a connected graph in the kernel.

\section{Conclusions}\label{sec7}

In this paper, we introduced the class of \emph{well-partitioned chordal graphs}, 
a subclass of chordal graphs that generalizes split graphs.
We provided a characterization by a set of forbidden induced subgraphs which also gave a polynomial-time recognition algorithm.
We showed that several parameterized variants and generalizations of the \probDP{} problem 
that on chordal graphs are only known to be in \XP{}, are in \FPT{} on well-partitioned chordal graphs.
These results in some cases implied polynomial kernels on split and well-partitioned chordal graphs.
It would be interesting to see for which problems well-partitioned chordal graphs can be used to narrow down complexity gaps
for problems that are hard on chordal and easy on split graphs,
or for which problems that are easy on split graphs and whose complexity is open on chordal graphs, 
well-partitioned chordal graphs can be used to obtain partial (positive) results.

Another typical characterization of (subclasses of) chordal graphs is via vertex orderings.
For instance, chordal graphs are famously characterized as the graphs admitting perfect elimination orderings~\cite{FulkersonGross1965}.
It would be interesting to see if well-partitioned chordal graphs admit a concise characterization in terms of vertex orderings as well.
While the degree of the polynomial in the runtime of our recognition algorithm is moderate,
our algorithm does not run in linear time. 
We therefore ask if it is possible to recognize well-partitioned chordal graphs in linear time;
and note that a characterization in terms of vertex orderings can be a promising step in this direction.

\bibliographystyle{plain}
\bibliography{ref}
\end{document}